\newtheorem{theorem}{Theorem}[section]
\newtheorem{lemma}[theorem]{Lemma}
\newtheorem{proposition}[theorem]{Proposition}
\newtheorem{corollary}[theorem]{Corollary}
\newtheorem{example}[theorem]{Example}
\theoremstyle{definition}
\theoremstyle{remark}
\newtheorem{remark}[theorem]{Remark}
\definecolor{A}{rgb}{.75,1,.75}
\numberwithin{equation}{section}
\newcommand{\ds}{\displaystyle}
\newcommand{\C}{\mathbb C}
\newcommand{\Z}{\mathbb Z}
\newcommand{\K}{\mathbb K}
\newcommand{\Cl}{\mathcal C}
\newcommand{\HC}{\mathcal{H}_n^c}
\newcommand{\HCa}{{\mathcal{H}_{n,\bf A}^c}}
\newcommand{\HCK}{\mathcal{H}_{n,\K}^c}
\newcommand{\ga}{\gamma}
\newcommand{\Ga}{\Gamma}
\newcommand{\la}{\lambda}
\newcommand{\E}{\mathbb E}
\newcommand{\F}{\mathbb F}
\newcommand{\hf}{\frac12}
\newcommand{\mf}{\mathfrak}
\newcommand{\mc}{\mathcal}
\newcommand{\n}{\mathfrak{n}}
\newcommand{\h}{\mathfrak{h}}
\newcommand{\q}{\mathfrak{q}}
\newcommand{\g}{\mathfrak{g}}
\newcommand{\gl}{\mathfrak{gl}}
\newcommand{\ev}[1]{{#1}_{\bar{0}}}
\newcommand{\od}[1]{{#1}_{\bar{1}}}
\newcommand{\End}{{\rm End}}
\newcommand{\Hom}{{\rm Hom}}
\newcommand{\uni}{\underline{i}}
\newcommand{\sH}{\mathcal{H}_n^-}
\newcommand{\sHa}{\mathcal{H}_{n,\bf A}^-}
\newcommand{\sHK}{\mathcal{H}_{n,\K}^-}
\newcommand{\xn}{T_{w_{(n)}}}
\newcommand{\rr}{R}
\begin{document}

\title
[Frobenius character formula and spin generic degrees]{Frobenius
character formula and spin generic degrees for Hecke-Clifford
algebra}
\author[Jinkui Wan and Weiqiang Wang]{Jinkui Wan and Weiqiang Wang}

\address{
Department of Mathematics,
Beijing Institute of Technology,
Beijing, 100081, P.R. China. }
\email{wjk302@gmail.com}

\address{Department of Mathematics, University of Virginia,
Charlottesville,VA 22904, USA.}
\email{ww9c@virginia.edu}


\begin{abstract}
The spin analogues of several classical concepts and results for
Hecke algebras are established. A Frobenius type formula is obtained
for irreducible characters of the Hecke-Clifford algebra. A precise
characterization of the trace functions allows us to define the
character table for the algebra. The algebra is endowed with a
canonical symmetrizing trace form, with respect to which the spin
generic degrees are formulated and shown to coincide with the spin
fake degrees. We further provide a characterization of the trace
functions and the symmetrizing trace form on the spin Hecke algebra
which is Morita super-equivalent to the Hecke-Clifford algebra.
\end{abstract}

\subjclass[2010]{Primary: 20C08, 20C25, 20C30}

\maketitle

\setcounter{tocdepth}{1}
 \tableofcontents

\section{Introduction}

\subsection{}

The Hecke algebras associated to symmetric groups or more general
finite Weyl groups are symmetric algebras with canonical
symmetrizing trace forms. The generic degrees defined in the
framework of generic Hecke algebras have played an important role in
finite groups of Lie type (first developed systematically by Lusztig
\cite{Lu}; also cf. Geck-Pfeiffer \cite{GP2}). On the other hand, a
Frobenius character formula for Hecke algebra associated to
symmetric groups has been established in \cite{Ram} (also see
King-Wybourne \cite{KW}), and the notion of character tables for
Hecke algebras has been formulated by Geck and Pfeiffer \cite{GP1}.

The Hecke-Clifford algebra $\HC$, which is a deformation of the
algebra $\mf H^c_n =\Cl_n \rtimes S_n$, admits a natural
superalgebra structure, first appeared in Olshanski \cite{Ol} who
formulated a super queer version of the Schur-Jimbo duality. Its
representation theory was subsequently developed by Jones-Nazarov
\cite{JN} for a generic quantum parameter $v$, and it is indeed
closely related to the spin representations of the symmetric group
developed by Schur \cite{Sch}. In particular the irreducible
characters $\zeta^\la$ of $\HC$ (always understood in the
$\Z_2$-graded sense in this paper) are parametrized by the strict
partitions of $n$. This algebra $\HC$ is also known to be Morita
super-equivalent to a spin Hecke algebra $\sH$ introduced by the
second author \cite{W}, which is a deformation of Schur's spin
symmetric group algebra.

\subsection{}

Here is a quick summary of the main results of this paper. We first
establish a Frobenius type formula for irreducible characters of the
Hecke-Clifford algebra. We endow the Hecke-Clifford algebra with a
canonical symmetrizing trace form $\gimel$, and then find an
explicit shifted hook formula for the spin generic degrees for the
Hecke-Clifford algebra by a novel and simple application of our
Frobenius type character formula. The spin Hecke algebra is also
shown to carry a natural symmetrizing trace form, which is
compatible with $\gimel$ for Hecke-Clifford algebra via the Morita
super-equivalence.

\subsection{}

Let us describe in some detail. Our approach to obtaining a
Frobenius type character formula for Hecke-Clifford algebra
(Theorem~\ref{thm:Frobenius}) takes advantage of the
Sergeev-Olshanski duality, and it is inspired by Ram's approach who
obtained a Frobenius character formula for the type $A$ Hecke
algebra via the Schur-Jimbo duality. The symmetric functions arising
in our Frobenius type formula are certain spin Hall-Littlewood
functions introduced  by the authors in \cite{WW2}, which are
one-parameter deformation of Schur $Q$-functions. This should be
compared to the appearance of Hall-Littlewood functions in
\cite{Ram}.

We show that every trace function on the Hecke-Clifford algebra over
the ring ${\bf A}=\Z[\hf][v,v^{-1}]$ is completely determined by its
values on the standard elements parametrized by the odd partitions
of $n$ (see Theorem~\ref{thm:spacetrace} and
Corollary~\ref{cor:spacetrace}). This leads to well-defined notions
of class polynomials and character table for Hecke-Clifford algebra,
similar to those for Hecke algebras introduced by Geck-Pfeiffer
\cite{GP1}.

The Hecke-Clifford algebra is a symmetric superalgebra endowed with
a canonical symmetrizing trace form $\gimel$ (the choice of $\gimel$
is not obvious as it does not restrict to the well-known
symmetrizing trace form on its type $A$ Hecke subalgebra), and this
allows us to formulate the {\em spin generic degrees} $D^\la$ for
the irreducible characters $\zeta^\la$ of $\HC$. Recall the authors
\cite{WW1} formulated earlier a spin coinvariant algebra for the
algebra $\mf H^c_n$, and found a closed formula for the so-called
{\em spin fake degrees} (this terminology appeared later in
\cite{WW3}).
For a symmetric algebra $\mc H$ with a symmetrizing form,
there exist elements called {\em Schur elements} (cf. \cite[Theorem 7.2.1]{GP2})
for irreducible characters, which can be used to determine when $\mc H$ is semsimple.
These elements are closely related to the generic degrees
in the case of usual Hecke algebras (cf. \cite[Section 8.1.8]{GP2}).
The Schur elements for Hecke-Clifford algebra are computed
explicitly (Theorem~\ref{th:spinSchur}), and they do not lie in $\bf
A$ in general. The spin generic degrees for Hecke-Clifford algebra
are shown to be polynomials in the quantum parameter $v$ and they
match perfectly with the spin fake degrees
(Theorem~\ref{th:genDeg}). This phenomenon is strikingly parallel to
the classical result due to Steinberg \cite{S} that the generic
degrees for the type $A$ Hecke algebras coincide with the fake
degrees for symmetric groups (also cf. \cite{Lu, GP2}).


We also succeed (see Theorem~\ref{thm:spintrace}) in describing the
space of trace functions of the spin Hecke algebra $\sH$ introduced
in \cite{W}. The canonical trace form $\gimel^-$ on $\sH$
corresponding to the form $\gimel$ on $\HC$ under the Morita
super-equivalence is characterized in a simple way
(Theorem~\ref{thm:sHtrace0}), in spite of the fact that the braid
relation is deformed for $\sH$ and the standard elements depend on
the choices of reduced expressions of a given element.

\subsection{}

Here is the layout of the paper. In Section~\ref{sec:duality}, we
review the Sergeev duality and Olshanski duality, and set up various
notations needed in the remainder of the paper. In
Section~\ref{sec:char},  using the Olshanski duality we compute a
Frobenius type character formula for $\HC$, whose specialization at
$v=1$ is equivalent to the classical character formula of Schur for
spin symmetric groups. In Section~\ref{sec:trace}, by a sequence of
reductions we show that the trace functions are determined
by their values on standard elements parametrized by odd partitions of
$n$. In Section~\ref{sec:degree}, the symmetrizing trace form
$\gimel$ on Hecke-Clifford algebra is introduced, and the spin
generic degrees are computed using the Frobenius character formula
given in Section~\ref{sec:char}. Finally in Section~\ref{sec:spinH},
we describe the counterparts of Section~\ref{sec:trace} and part of
Section~\ref{sec:degree} for spin Hecke algebras.

{\bf Acknowledgements.}  The first author was partially supported by
NSFC-11101031, and she thanks Shun-Jen Cheng at Academia Sinica for support
and providing an excellent atmosphere in the summer of 2011, where
part of this paper was written. The second author was partially
supported by NSF DMS-1101268.

\section{The Sergeev-Olshanski duality}
\label{sec:duality}

In this preliminary section, we shall introduce the Hecke-Clifford
algebra, review the Sergeev-Olshanski duality, and set up notations
to be used in later sections.

\subsection{Basics on superalgebras}\label{sec:super}

Let $\F$ be a field, which is always assumed to be of characteristic
not equal to $2$ in this paper. By a vector superspace over $\F$ we
mean a $\Z_2$-graded space $V =\ev V \oplus \od V$. If $\dim
\ev{V}=r$ and $\dim \od{V}=m$, we write $\underline{\dim}V=r|m$.
Given a homogeneous element $0\neq v\in V$, we denote its degree by
$|v|\in\Z_2$. An associative $\F$-superalgebra $\mc A =\ev{\mc{A}}
\oplus \od{\mc{A}}$ satisfies $\mc{A}_i \cdot \mc{A}_j \subseteq
\mc{A}_{i+j}$ for $i,j \in\Z_2$. By an ideal $I$ and respectively a
module $M$ of a superalgebra $A$, we  always mean that $I$ and $M$
are $\Z_2$-graded, i.e., $I=(I\cap \ev{\mc{A}})\oplus (I\cap
\od{\mc{A}})$, $M=\ev M \oplus \od M$ such that $A_i M_j \subseteq
M_{i+j}$ for $i,j \in \Z_2$.  The superalgebra $\mc A$ is called
simple if it has no non-trivial ideals.

Let $V$ be an $\F$-superspace with $\underline{\dim}V=r|m$, then
$$
M(V):=\End_{\F}(V)
$$
is a simple superalgebra.
Assume now in addition $r=m$ and an odd automorphism $J$ of $V$ of
order $2$ is given. The subalgebra of $\End_{\F} (V)$,
$$
Q(V) =\{ x \in \End_{\F} (V) \mid x \text{ and } J\text{
super-commute}\},
$$
is also a simple superalgebra.
Observe that the resulting superalgebras $Q(V)$ are isomorphic to each other
for different automorphisms $J$.

An irreducible module $V$ over an $\F$-superalgebra $\mc A$ is
called {\em split irreducible} if $\E\otimes_{\F}V$ is irreducible
over $\E\otimes_{\F}\mc A$ for any field extension $\E\supseteq \F$.
A split irreducible $\mc A$-module $V$ is of type $\texttt M$ if
$\End_{\F} (V)$ is one-dimensional and of type $\texttt Q$ if
$\End_{\F} (V)$ is two-dimensional. A superalgebra $\mc A$ is {\em
split semisimple} if $\mc A$ is a direct sum of simple algebras of
the form $M(V)$ and $Q(V)$ for various $V$.

Recall that given two superalgebras $\mathcal{A}$ and $\mathcal{B}$,
the tensor product  $\mathcal{A}\otimes\mathcal{B}$ is naturally a
superalgebra, with multiplication defined by
\begin{equation}  \label{eq:superotimes}
(a\otimes b)(a'\otimes b')=(-1)^{|b| \cdot |a'|}(aa')\otimes (bb')
\qquad (a,a'\in\mathcal{A}, b,b'\in\mathcal{B}).
\end{equation}
The following lemma can be found in \cite{Jo}
(where $\F$ is assumed to be an algebraically closed field).

\begin{lemma}\label{tensorsmod}
Let $V$ be a split irreducible $\mathcal{A}$-module and $W$ be a
split irreducible $\mathcal{B}$-module.
\begin{enumerate}
\item If both $V$ and $W$ are of type $\texttt{M}$, then
$V\otimes W$ is a split irreducible
$\mathcal{A}\otimes\mathcal{B}$-module of type $\texttt{M}$.

\item If one of $V$ or $W$ is of type $\texttt{M}$ and the other
is of type $\texttt{Q}$, then $V\otimes W$ is a split irreducible
$\mathcal{A}\otimes\mathcal{B}$-module of type $\texttt{Q}$.

\item If both $V$ and $W$ are of type $\texttt{Q}$, then
$V\otimes W$ is a sum of two isomorphic copies of a split
irreducible module of type $\texttt M$, which will be denoted by
$2^{-1} V\otimes W$.
\end{enumerate}
Moreover, all split irreducible
$\mathcal{A}\otimes\mathcal{B}$-modules arise as components of
$V\otimes W$ for some choice of irreducibles $V,W$.
\end{lemma}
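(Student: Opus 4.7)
The plan is to combine a super version of the Jacobson density theorem with explicit tensor-product identifications among the simple superalgebras $M(V)$ and $Q(V)$. Recall that a split irreducible $\mc{A}$-module $V$ has $\End_{\F}(V) = \F$ in type $\texttt{M}$ and $\End_{\F}(V) = \F \oplus \F \theta_V$ in type $\texttt{Q}$, where $\theta_V$ is an odd involution; density then identifies the image of $\mc{A}$ in $\End_{\F}(V)$ with $M(V)$ or $Q(V)$ accordingly. Hence the image of $\mc{A} \otimes \mc{B}$ inside $\End_{\F}(V \otimes W)$ is one of the three super tensor products
\[
M(V) \otimes M(W), \qquad M(V) \otimes Q(W), \qquad Q(V) \otimes Q(W),
\]
computed with the sign rule \eqref{eq:superotimes}.

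I would then carry out three structural identifications and read off the module type. The first two are direct calculations:
\[
M(V) \otimes M(W) \cong M(V \otimes W), \qquad M(V) \otimes Q(W) \cong Q(V \otimes W),
\]
where in the second case the distinguished odd involution on $V \otimes W$ is $1 \otimes \theta_W$. In both cases $V \otimes W$ is the natural module of a simple superalgebra, hence split irreducible, of type $\texttt{M}$ or $\texttt{Q}$ according to its endomorphism algebra, yielding (1) and (2). For (3), the element $\phi := \theta_V \otimes \theta_W$ in $Q(V) \otimes Q(W)$ is even (two odd tensorands) and a short sign check using \eqref{eq:superotimes} shows that it is central; since $\phi^2$ is a nonzero scalar, its eigenspace decomposition $V \otimes W = U_+ \oplus U_-$ produces two $\mc{A} \otimes \mc{B}$-submodules. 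The operator $\theta_V \otimes 1$ is odd, super-commutes with $\mc{A} \otimes \mc{B}$, and anticommutes with $\phi$, so it restricts to an isomorphism $U_+ \cong U_-$ as $\mc{A} \otimes \mc{B}$-modules. A super-dimension count then identifies $Q(V) \otimes Q(W)$ with $M(U_+)$, so the common summand $U_+$ is split irreducible of type $\texttt{M}$, and we set $2^{-1} V \otimes W := U_+$.

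The main technical obstacle is the sign bookkeeping from \eqref{eq:superotimes}, above all the centrality of $\phi$ in $Q(V) \otimes Q(W)$ and the correct computation of the parities of $\phi$ and $\theta_V \otimes 1$. Once these structural isomorphisms are in hand, the final assertion is handled as follows: for an arbitrary split irreducible $\mc{A} \otimes \mc{B}$-module $X$, restrict the action to $\mc{A}$ and to $\mc{B}$ to select split irreducible constituents $V, W$ of the respective restrictions; the image of $\mc{A} \otimes \mc{B}$ in $\End_{\F}(X)$ is then a simple quotient of one of the three tensor products above, forcing $X$ to appear as a summand of $V \otimes W$ (or of $2^{-1} V \otimes W$).
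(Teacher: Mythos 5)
Your route is the standard one (the paper offers no proof of its own here, quoting the result from \cite{Jo} with the caveat that there $\F$ is algebraically closed), and most of it is sound: density identifies the images of $\mc A$ and $\mc B$ with $M(V)$, $Q(V)$, etc., and your sign checks in case (3) are correct --- $\phi=\theta_V\otimes\theta_W$ is even and genuinely central because elements of $Q$ supercommute with the distinguished odd involution, $\phi^2=-\theta_V^2\theta_W^2$ is a nonzero scalar, and $\theta_V\otimes 1$ is odd, invertible, supercommutes with the action and anticommutes with $\phi$ (this swap also gives the injectivity on $U_+$ that your dimension count silently needs to conclude $Q(V)\otimes Q(W)\cong M(U_+)$). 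The first genuine gap is in the eigenspace step: the eigenvalues of $\phi$ are $\pm\sqrt{-\theta_V^2\theta_W^2}$, and you need them to lie in $\F$. This is exactly where the algebraic closedness of \cite{Jo} (or at least $\sqrt{-1}\in\F$ after normalizing $\theta_V^2=\theta_W^2=1$) enters, and without it part (3) is actually false: over $\F=\R$ take $\mc A=\mc B=\Cl_1$ and $V=W$ its regular module, which is split irreducible of type \texttt{Q}; then $V\otimes W$ is the graded regular module of $\Cl_2$, whose even part is a field isomorphic to $\C$, so $V\otimes W$ is graded-irreducible and not a sum of two copies. Since every field used in the paper contains $\C$ this is harmless, but your proof must state the hypothesis it uses (finite-dimensionality of $V,W$ is likewise used implicitly in the density/commutant identifications).

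The second gap is in the final ``moreover'' claim. Picking an arbitrary irreducible constituent $V$ of $X|_{\mc A}$ does not by itself imply that $\mathrm{Ann}_{\mc A}(V)$ annihilates $X$, which is what you need in order to say that the image of $\mc A\otimes\mc B$ in $\End_\F(X)$ is a quotient of (image of $\mc A$ on $V$) tensored with (image of $\mc B$ on $W$). The standard repair: the ideal $J(\mc A)\otimes\mc B+\mc A\otimes J(\mc B)$, with $J(\cdot)$ the Jacobson radical, is nilpotent and hence kills the irreducible $X$, so one may assume $\mc A$ and $\mc B$ are split semisimple; then exactly one pair of simple blocks acts nontrivially on $X$, the restrictions $X|_{\mc A}$ and $X|_{\mc B}$ are isotypic with factors $V$ and $W$ attached to those blocks, and $X$ is an irreducible module over the simple superalgebra computed in your cases (1)--(3), hence a component of $V\otimes W$ (or of $2^{-1}V\otimes W$). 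With these two points made explicit your argument is complete and agrees with the proof in the literature.
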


\subsection{The Sergeev duality}

In this subsection, we shall take $\F=\C$. Denote by $\Cl_n$ the
Clifford superalgebra generated by odd elements $c_1,\ldots,c_n$
subject to the relations
\begin{equation}  \label{eq:Cl}
c_i^2=1,c_ic_j=-c_jc_i, \quad 1\leq i\neq j\leq n.
\end{equation}
Denote by $\mf{H}^c_n=\mathcal{C}_n\rtimes S_n$ the superalgebra
generated by the even elements $s_1,\ldots,s_{n-1}$ and the odd
elements $c_1,\ldots,c_n$  subject to \eqref{eq:Cl}, the standard
Coxeter relation among $s_i=(i,i+1)$ for the symmetric group $S_n$,
and the additional relations:
\begin{align*}
s_ic_i&=c_{i+1}s_i, s_ic_j=c_js_i,\quad 1\leq i,j\leq n-1, j\neq i,i+1.
\end{align*}

Denote by $\mc{P}_n$  the set of all partitions of $n$ and by
$\mc{CP}_n$ the set of compositions of $n$. Let $\mc{SP}_n$
(respectively, $\mc{OP}_n$) denote the set of strict (respectively,
odd) partitions of $n$. For $\la\in\mc{P}_n$, denote by $\ell(\la)$
the length of $\la$ and let
\begin{align*}
\delta(\la)=\left\{
\begin{array}{ll}
0,&\text{ if }\ell(\la)\text{ is even},\\
1,&\text{ if } \ell(\la)\text{ is odd}.
\end{array}
\right.
\end{align*}
Denote by
$Q_\la$ the Schur $Q$-functions associated to a strict partition
$\la$ (cf. \cite{Mac, WW3}).
It is known \cite{Jo, Se} that there exists a characteristic map
(cf. \cite[(3.12)]{WW3})
relating the representation theory of the algebra
$\mf{H}^c_n$ to the theory of symmetric functions,
which can be viewed as a analog of the  Frobenius characteristic map
in the representation theory of symmetric groups.
More precisely, for each strict partition $\la$ of
$n$, there exists an irreducible $\mf H^c_n$-module $U^{\la}_1$
which corresponds to the Schur $Q$-function $Q_\la$ (up to some
$2$-power) under the characteristic map, and $\{U^{\la}_1 \mid
\la\in\mc{SP}_n\}$ forms a complete set of non-isomorphic
irreducible $\mf H^c_n$-modules. Furthermore, $U^{\la}_1$ is of type
$\texttt M$ if $\delta(\la)=0$ and  is of type $\texttt Q$ if
$\delta(\la)=1$. Denote by $\zeta_1^{\la}$ the character of
$U^{\la}_1$ for $\la\in\mc{SP}_n$.

The queer Lie superalgebra, denoted by $\q(m)$, is the Lie
superalgebra associated to the associative superalgebra $Q(m)$ with
respect to the super-bracket. For convenience, in the case when
$\F=\C$, we shall take the odd involution
\begin{equation}\label{eq:matrixP}
P=\sqrt{-1}\begin{pmatrix}
0&I_m \\
-I_m &0\\
\end{pmatrix},
\end{equation}
then $Q(m)$ and hence $\q(m)$ will consist of
$2m\times 2m$
matrices of the form:
\begin{equation}\label{eqn:qmatrix}
\begin{pmatrix}
a&b\\
b&a\\
\end{pmatrix},
\end{equation}
where $a$ and $b$ are arbitrary $m\times m$ matrices over $\C$,
and the rows and columns of \eqref{eqn:qmatrix} are labeled by the
set
$$
I(m|m) :=\{-1, \ldots, -m,1,\ldots, m\}.
$$
Let $\g=\q(m)$. The even (respectively, odd) part $\ev\g$
(respectively, $\od\g$) consists of those matrices of the form
\eqref{eqn:qmatrix} with $b=0$ (respectively, $a=0$). Denote by
$E_{ij}$ for $i,j \in I(m|m)$ the standard elementary matrix with
the $(i,j)$th entry being $1$ and zero elsewhere. Fix the triangular
decomposition $ \g=\n^-\oplus\h\oplus\n^+, $ where $\h$
(respectively, $\n^+$, $\n^-$) is the subalgebra of $\g$ which
consists of matrices of the form ~(\ref{eqn:qmatrix})~ with $a, b$
being arbitrary diagonal (respectively, upper triangular, lower
triangular) matrices. Let $\mf{b}=\h\oplus\n^+$. We denote the
standard basis for $\ev\h$ by
$$
H_i =E_{-i,-i} +E_{ii}, \qquad 1\le i \le m.
$$

Every finite-dimensional $\g$-module is isomorphic to a highest
weight module $V_1(\la)$ generated by a vector $v_\la$ satisfying
$\n^+.v_\la=0$ and $hv_\la=\la(h)v_\la$ for $h \in\ev\h$, for some
$\la \in \ev\h^*$. We have a weight space decomposition $V_1(\la)
=\oplus_{\mu} V_1(\la)_\mu$, where a weight $\mu$ can be identified
with an $m$-tuple $(\mu_1, \ldots, \mu_m)$. Let $x_1,\ldots, x_m$ be
$m$ independent variables. A character of a $\q(m)$-module with
weight space decomposition $M=\oplus M_{\mu}$ is defined to be
$$
{\rm ch} M=\sum_{\mu=(\mu_1,\ldots,\mu_m)}\dim M_{\mu}
x_1^{\mu_1}\cdots x_m^{\mu_m}.
$$

We have a representation $(\omega_n, (\C^{m|m})^{\otimes
n})$ of $\gl(m|m)$, hence of its subalgebra $\q(m)$, and we also have
a representation $(\psi_n, (\C^{m|m})^{\otimes n})$ of the algebra $\mf H^c_n$
 defined by
\begin{align*}
\psi_n (s_i)&.(v_1\otimes  \ldots\otimes v_i\otimes v_{i+1}\otimes\ldots \otimes v_n) =
(-1)^{|v_i|\cdot|v_{i+1}|}v_1\otimes
 \ldots\otimes v_{i+1}\otimes v_i\otimes\ldots \otimes v_n,\\
 \psi_n (c_i)&.(v_1\otimes  \ldots \otimes v_n) =
 (-1)^{(|v_1| +\ldots +|v_{i-1}|)} v_1\otimes
   \ldots \otimes v_{i-1} \otimes P v_i \otimes\ldots \otimes v_n,
\end{align*}
where $v_i, v_{i+1} \in \C^{m|m}$ are $\Z_2$-homogeneous. We recall a classical result of
Sergeev.

\begin{proposition}   \cite[Theorem 3]{Se}
 \label{prop:Sergeev}
The algebras $\omega_n(U(\q(m)))$ and $\psi_n(\mf H^c_n)$ form mutual
centralizers in $\End_\C ((\C^{m|m})^{\otimes n})$. As an $U(\q(m))
\otimes \mf{H}^c_n$-module, we have
\begin{align*}
V^{\otimes n}\cong\bigoplus _{\la \in\mc{SP}_n, \ell(\la)\leq
m}2^{-\delta(\la)}V_1(\la)\otimes U^{\la}_1.
\end{align*}
Moreover, the character of $V_1(\la)$  is given by
\begin{equation}\label{eqn:queer.ch}
{\rm ch}V_1(\la)=2^{-\frac{\ell(\la)-\delta(\la)}{2}}Q_{\la}(x_1, \ldots, x_m).
\end{equation}
\end{proposition}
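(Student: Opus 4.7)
The plan is to establish the duality via a super double centralizer argument, identify the multiplicity spaces with highest weight $\q(m)$-modules, and then extract the character formula by comparing two computations of $\text{ch}(V^{\otimes n})$.

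First I would verify directly from the definitions of $\psi_n$ and $\omega_n$ that the two actions commute on $(\C^{m|m})^{\otimes n}$. The commutation of $\omega_n(\q(m))$ with $\psi_n(s_i)$ is the standard super Schur--Weyl calculation: the sign $(-1)^{|v_i|\cdot|v_{i+1}|}$ built into the definition of $\psi_n(s_i)$ is precisely what is needed so that $\psi_n(s_i)$ intertwines the diagonal $\q(m)$-action. The commutation with the Clifford generators $\psi_n(c_i)$ reduces to the single-factor statement that the action of $P$ on $\C^{m|m}$ super-commutes with every element of $Q(m)$, which is the defining property of $Q(m)$.

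Next I would invoke the super analogue of the double centralizer theorem. Since $\mf{H}^c_n$ is split semisimple over $\C$, once the two subalgebras centralize each other they are automatically mutual centralizers, yielding a decomposition $(\C^{m|m})^{\otimes n} \cong \bigoplus_\la W_\la \otimes U_1^\la$ of $U(\q(m))\otimes \mf{H}^c_n$-modules, with the factor $2^{-\delta(\la)}$ arising from Lemma~\ref{tensorsmod}(3) in the type-$\texttt{Q}$ case (i.e.\ when $\delta(\la)=1$). The multiplicity spaces $W_\la$ are split irreducible $\omega_n(U(\q(m)))$-modules. To identify $W_\la$ with $V_1(\la)$ and to see that the sum runs exactly over strict $\la$ of length $\le m$, I would produce an explicit $\n^+$-annihilated weight vector of $\ev\h$-weight $\la$ in the $\la$-isotypic component, built from a standard tensor $e_{1}^{\otimes \la_1}\otimes e_{2}^{\otimes \la_2}\otimes \cdots$ composed with a suitable Clifford-type symmetrizer in $\mf{H}^c_n$; no such highest weight vector can exist when $\ell(\la)>m$ since the weight would have more than $m$ nonzero coordinates.

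Finally, for the character formula I would compute $\text{ch}(V^{\otimes n})$ in two ways. On one hand, each basis vector $e_{\pm i}\in\C^{m|m}$ has $\ev\h$-weight $\epsilon_i$, giving $\text{ch}(V)=2(x_1+\cdots+x_m)$ and hence $\text{ch}(V^{\otimes n})=2^n p_1^n$ with $p_1 = x_1+\cdots+x_m$. On the other hand, the Sergeev decomposition together with the type bookkeeping from Lemma~\ref{tensorsmod} expresses $\text{ch}(V^{\otimes n})$ as $\sum_\la 2^{-\delta(\la)}\dim(U_1^\la)\,\text{ch}V_1(\la)$. Matching this against the classical expansion of $2^n p_1^n$ as a $\Z$-linear combination of Schur $Q$-functions -- in which the coefficient of $Q_\la$ is known explicitly in terms of $\dim U_1^\la$ via the spin Frobenius characteristic map for $\mf{H}^c_n$ -- forces the asserted identity $\text{ch}V_1(\la) = 2^{-(\ell(\la)-\delta(\la))/2} Q_\la(x_1,\ldots,x_m)$. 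The main obstacle is precisely this $2$-power bookkeeping: the interplay among the type dichotomy of $U_1^\la$, the corresponding dichotomy of $V_1(\la)$, the factor $2^{-\delta(\la)}$ in the decomposition, and the doubling phenomenon of Lemma~\ref{tensorsmod}(3) is easy to sign-error, and the correct exponent $\tfrac{\ell(\la)-\delta(\la)}{2}$ emerges only after carefully reconciling these with the standard normalization of $Q_\la$.
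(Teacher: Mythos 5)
The paper does not prove this proposition at all; it is quoted verbatim from Sergeev \cite{Se}, so there is no internal proof to compare against and your sketch has to be judged on its own terms. On those terms it has two genuine gaps. The first is the assertion that, because $\mf H^c_n$ is split semisimple, two commuting subalgebras of $\End_\C((\C^{m|m})^{\otimes n})$ are ``automatically mutual centralizers.'' This is false in general (take both algebras to be the scalars acting on $\C^2$): semisimplicity of $\mf H^c_n$ plus commutation only yields a decomposition $V^{\otimes n}\cong\bigoplus_\la W_\la\otimes U_1^\la$ in which the multiplicity spaces $W_\la$ are irreducible over the \emph{full} centralizer $\End_{\mf H^c_n}((\C^{m|m})^{\otimes n})$. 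The real content of Sergeev's theorem is the surjectivity statement $\omega_n(U(\q(m)))=\End_{\mf H^c_n}((\C^{m|m})^{\otimes n})$, usually proved by identifying the supercentralizer of the Clifford action with $Q(m)^{\otimes n}$, passing to $S_n$-invariants, and showing that the image of $U(\q(m))$ under the iterated coproduct spans the supersymmetric part; none of this appears in your outline, and the commutation check plus Lemma~\ref{tensorsmod} bookkeeping (which is fine, including the factor $2^{-\delta(\la)}$) does not substitute for it.

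The second gap is in the character formula. Computing $\mathrm{ch}(V^{\otimes n})=2^np_1^n$ and equating it with $\sum_\la 2^{-\delta(\la)}\dim(U_1^\la)\,\mathrm{ch}V_1(\la)$ gives a \emph{single} identity of symmetric polynomials, which cannot determine the individual unknown functions $\mathrm{ch}V_1(\la)$: even granting the triangularity $\mathrm{ch}V_1(\la)=d_{\la\la}m_\la+(\text{lower terms in dominance order})$, the coefficient of each $m_\mu$ produces one equation involving the unknowns $d_{\la\mu}$ for all strict $\la\geq\mu$, so the system is underdetermined (one can perturb two of the putative characters by opposite multiples of a lower $m_\mu$ without violating any constraint you have imposed). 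What actually pins down the characters is the whole family of trace identities $\mathrm{tr}(D\psi_n(h))$ for all $h\in\mf H^c_n$ (equivalently, for representatives of all relevant classes), compared with Schur's character formula through the characteristic map and the linear independence of the $\zeta_1^\la$ --- precisely the mechanism the paper runs in the \emph{opposite} direction in Corollary~\ref{cor:Frobenius} and Proposition~\ref{prop:generaltrace}, taking the present proposition as known input. So the step ``matching forces the asserted identity'' is exactly where your argument fails, and it is the step that needs either Sergeev's original computation or the full trace/characteristic-map machinery rather than the $h=1$ specialization alone.
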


Associated to the formal variables $x_1,\ldots, x_m$, let $D$ be the operator defined by
\begin{equation}  \label{eq:D}
D =x_1^{H_1} \cdots x_m^{H_m}.
\end{equation}
The operator $D$ commutes with the action of $\mf H^c_n$ on
$(\C^{m|m})^{\otimes n}$, since the action of $\mf H^c_n$ preserves
the weight space decomposition. Proposition~\ref{prop:Sergeev} has
the following corollary.

\begin{corollary}\label{cor:Frobenius}
For $h\in \mf H^c_n$, the trace of the linear operator $Dh$ on
$(\C^{m|m})^{\otimes n}$ is
$$
\text{tr}(Dh)=\sum_{\la \in\mc{SP}_n, \ell(\la)\leq
m}2^{-\frac{\ell(\la)+\delta(\la)}{2}}Q_{\la}(x_1,\ldots,
x_m)\zeta^{\la}_1(h).
$$
\end{corollary}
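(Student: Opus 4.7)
The plan is to compute $\operatorname{tr}(Dh)$ by pulling it through the Sergeev bimodule decomposition of Proposition~\ref{prop:Sergeev}. Since $D$ is a product of the Cartan elements $H_i \in \ev\h$ (after replacing each $x_i^{H_i}$ by its action on weight spaces), it lies in the image of $\omega_n(U(\q(m)))$, while $h$ acts via $\psi_n$. By Proposition~\ref{prop:Sergeev} these actions commute, so $Dh$ respects the direct sum decomposition
$$
V^{\otimes n}\cong\bigoplus _{\la\in\mc{SP}_n,\,\ell(\la)\leq m}2^{-\delta(\la)}V_1(\la)\otimes U^{\la}_1,
$$
and on each summand $D$ acts on the first tensor factor while $h$ acts on the second.

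I would then use the fact that for an operator of the form $A\otimes B$ on a tensor product of finite-dimensional (super)spaces, the ordinary trace satisfies $\operatorname{tr}(A\otimes B)=\operatorname{tr}(A)\cdot\operatorname{tr}(B)$ (no sign issues arise here because we are taking the ordinary, not super, trace). Applied to each $\la$-summand, this gives
$$
\operatorname{tr}\bigl(Dh\big|_{V_1(\la)\otimes U^{\la}_1}\bigr) =\operatorname{tr}_{V_1(\la)}(D)\cdot\operatorname{tr}_{U^{\la}_1}(h) ={\rm ch}V_1(\la)(x_1,\ldots,x_m)\cdot\zeta^{\la}_1(h),
$$
where the first equality recognizes that $\operatorname{tr}(D)$ on a weight module is precisely the character evaluated at $(x_1,\ldots,x_m)$, and the second uses the definition of $\zeta^{\la}_1$.

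Summing over $\la$ and inserting the multiplicity factor $2^{-\delta(\la)}$ yields
$$
\operatorname{tr}(Dh)=\sum_{\la\in\mc{SP}_n,\,\ell(\la)\leq m}2^{-\delta(\la)}\,{\rm ch}V_1(\la)(x_1,\ldots,x_m)\,\zeta^{\la}_1(h).
$$
Finally, I would substitute the Sergeev character formula \eqref{eqn:queer.ch}, namely ${\rm ch}V_1(\la)=2^{-(\ell(\la)-\delta(\la))/2}Q_{\la}(x_1,\ldots,x_m)$, and combine the two powers of $2$ via
$\delta(\la)+\tfrac{\ell(\la)-\delta(\la)}{2}=\tfrac{\ell(\la)+\delta(\la)}{2}$,
to obtain the claimed formula.

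The only subtle point is the appearance of the fractional multiplicity $2^{-\delta(\la)}$ in the bimodule decomposition when $\delta(\la)=1$, which corresponds via Lemma~\ref{tensorsmod}(3) to the fact that $V_1(\la)\otimes U^{\la}_1$ is two copies of a single irreducible of type $\texttt{M}$; this is the main bookkeeping issue to track carefully, but Proposition~\ref{prop:Sergeev} has already packaged it into the correct coefficient. Once that is in hand, the argument is a direct computation with traces on a tensor product.
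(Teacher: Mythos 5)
Your argument is correct and is exactly the route the paper intends: it treats the corollary as an immediate consequence of Proposition~\ref{prop:Sergeev}, decomposing $V^{\otimes n}$ as a bimodule, factorizing the ordinary trace over each summand $V_1(\la)\otimes U^{\la}_1$, and absorbing the multiplicity $2^{-\delta(\la)}$ into the power of $2$ from \eqref{eqn:queer.ch}. Your handling of the type $\texttt{Q}$ bookkeeping via Lemma~\ref{tensorsmod}(3) matches what the paper leaves implicit, so there is nothing to add.
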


\subsection{The Hecke-Clifford algebra $\HC$.}

Let $v$ be an indeterminate. The Hecke-Clifford algebra $\HC$ is the
associative superalgebra over the field $\C(v^{\frac 12})$ with the
even generators $T_1,\ldots,T_{n-1}$ and the odd generators
$c_1,\ldots,c_n$ subject to the following relations:
\begin{align}
(T_i-v)(T_i+1)=0, & \quad 1\leq i\leq n-1,\notag\\
T_iT_j=T_jT_i, &  \quad 1\leq i,j\leq n-1, |i-j|>1,\notag\\
T_iT_{i+1}T_i =T_{i+1}T_iT_{i+1}, & \quad 1\leq i\leq n-2,\notag\\
 c_i^2=1,c_ic_j=-c_jc_i, & \quad 1\leq i\neq j\leq n,\notag\\
T_ic_j =c_jT_i, & \quad j\neq i,i+1, 1\leq i\leq n-1, 1\leq j\leq n ,\notag\\
T_ic_i=c_{i+1}T_i, & \quad
  1\leq i\leq n-1. \notag
\end{align}

Note that  the specialization of $\HC$ at $v=1$ recovers $\mf
H^c_n$. Denote
$$
[n] :=\{1, \ldots, n\}.
$$
For an (ordered) subset $I =\{i_1, i_2, \ldots, i_k\} \subseteq
[n]$,  we denote $C_I =c_{i_1}c_{i_2} \ldots c_{i_k}$. By
convention, $C_\emptyset =1$. Then $\{C_I|~I\subseteq[n]\}$ is a
basis for the Clifford algebra $\Cl_n$. According to
\cite[Proposition 2.1]{JN}, the set $\{T_{\sigma}C_I \mid \sigma\in
S_n, I\subseteq [n]\}$ forms a linear basis of the algebra $\HC$.

\begin{remark}\label{rem:twoHC}
Our definition of the Hecke-Clifford algebra $\HC$ is slightly
different from the algebra introduced in \cite{Ol}, where the
quantum parameter $q$ is used. The even generators $t_1,\ldots,
t_{n-1}$ in \cite{Ol}  are related to $T_1,\ldots, T_{n-1}$ via
$v=q^2$ and $t_i=v^{-\frac 12}T_i$, for $1\leq i\leq n-1$.
\end{remark}

Write $ [k]_{v}=\ds\frac{v^k-v^{-k}}{v-v^{-1}} $ for $k\in\Z_+$. Let
$$
\K :=\C(v^{\frac 12}) \big(\sqrt{[1]_v},\ldots,\sqrt{[n]_v} \big)
$$
be the field extension of $\C(v^{\frac 12})$, and denote
$\HCK:=\K\otimes_{\C(v^{\frac 12})}\HC$.

\begin{proposition}\cite[Corollary 6.8]{JN}
  \label{prop:JN}
The $\K$-superalgebra $\HCK$ is split semisimple. For each
$\la\in\mc{SP}_n$, there exists an irreducible representation
$U^{\la}$ of $\HCK$ with character $\zeta^{\la}$ such that
$\{U^{\la} \mid \la\in\mc{SP}_n\}$ forms a complete set of
nonisomorphic irreducible $\HCK$-modules. Moreover, the
specialization at $v=1$ of $\zeta^{\la}$  gives $\zeta^{\la}_1$ for
$\la\in\mc{SP}_n$.
\end{proposition}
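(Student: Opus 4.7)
The plan is to follow Jones--Nazarov \cite{JN} and construct the irreducible $\HCK$-modules explicitly by building a spin analogue of Young's seminormal form. The key ingredient is a family of Jucys--Murphy type elements $x_1,\ldots,x_n\in\HC$---certain polynomial expressions in the $T_i$ and $c_j$---whose even parts pairwise commute. Their eigenvalues on any finite-dimensional module are governed by residues of boxes of shifted Young diagrams of strict partitions, while the off-diagonal matrix coefficients of the $T_i$ turn out to involve the radicals $\sqrt{[k]_v}$; this explains exactly the choice of field extension $\K$.

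First I would construct, for each $\la\in\mc{SP}_n$, a candidate module $U^\la$ with basis indexed by standard shifted tableaux of shape $\la$, defining the action of the $T_i$ and $c_j$ by explicit residue formulas over $\K$. Verification of the Hecke--Clifford defining relations (quadratic, braid, Clifford, and mixed) is a direct though computationally heavy check. Irreducibility of $U^\la$ then follows because the joint spectrum of the Jucys--Murphy elements separates basis vectors; non-isomorphism $U^\la\not\cong U^\mu$ for $\la\ne\mu$ is witnessed by the difference of these spectra; and the module types $\texttt{M}$ and $\texttt{Q}$ are read off from $\delta(\la)$---when $\ell(\la)$ is odd, an explicit odd operator supercommutes with the whole action and yields a two-dimensional super-endomorphism ring.

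A dimension count then closes the argument: matching the quantity $\sum_{\la\in\mc{SP}_n} 2^{-\delta(\la)}(\dim_\K U^\la)^2$, computed via the shifted hook-length formula, against the known rank $2^n\cdot n!$ of $\HC$ as a free $\C(v^{1/2})$-module implies that the natural map from $\HCK$ into the product of the simple superalgebras $M(U^\la)$ (when $\delta(\la)=0$) and $Q(U^\la)$ (when $\delta(\la)=1$) is an isomorphism. This yields simultaneously split semisimplicity, completeness of the list $\{U^\la\mid\la\in\mc{SP}_n\}$, and the $\texttt{M}/\texttt{Q}$ classification. The specialization claim at $v=1$ is then immediate since the seminormal formulas degenerate uniformly to the Schur--Sergeev construction of the irreducible $\mf H^c_n$-modules $U^\la_1$ recalled in Section~\ref{sec:duality}.

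The most delicate step will be the bookkeeping with the radicals $\sqrt{[k]_v}$: one must verify that every matrix entry of the seminormal representation lies in $\K$ and that no further field extension is required to split the type-$\texttt{Q}$ summands. This accounting---precisely the technical heart of \cite{JN}---is what dictates the specific choice of base field appearing in the proposition and represents the main technical obstacle compared to the classical Ram--Young seminormal construction for the type $A$ Hecke algebra.
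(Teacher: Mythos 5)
Your sketch is correct in outline and coincides with the approach of the source this proposition is quoted from: the paper gives no proof of its own here, citing \cite{JN}, and its accompanying remark points to exactly the seminormal-form construction (the $v$-analogue of \cite{HKS, Wan}, with Jucys--Murphy spectra indexed by shifted standard tableaux and matrix entries involving $\sqrt{[k]_v}$) that you describe. The only step you pass over lightly is that the dimension count $\sum_{\la\in\mc{SP}_n}2^{-\delta(\la)}(\dim_\K U^\la)^2 = 2^n\, n!$ yields an isomorphism only in combination with the standard density/Wedderburn argument giving surjectivity onto the product of the superalgebras $M(U^\la)$ and $Q(U^\la)$, which is routine once irreducibility and pairwise non-isomorphism are established.
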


\begin{remark}
The precise form of the field $\K$ is not relevant in this paper and
we could simply take $\K$ to be the algebraic closure of
$\C(v^{\frac 12})$ as well. The construction of the irreducible
$\HCK$-modules $U^{\la}$ in \cite{JN} can be streamlined by a
straightforward $v$-analogue of the semiformal form construction of
the irreducible $\mf H^c_n$-modules given in \cite{HKS} and independently in
\cite{Wan} (cf. \cite[Section~7]{WW3}). One advantage of the
latter approach is that it works equally well for the affine
Hecke-Clifford algebra.
\end{remark}

\subsection{The action of $\HC$ on $V^{\otimes n}$}

Let $V=\K^{m|m}$ be the vector $\K$-superspace with
$\underline{\dim}_\K V=m|m$. The standard basis $e_{-m},\ldots,
e_{-1}, e_1,\ldots, e_m$ of $\C^{m|m}$ can be naturally regarded as
a $\K$-basis of $V$ and  the operator $D$ is defined on $V^{\otimes
n}$. Meanwhile $\{E_{ij} \mid i,j\in I(m|m)\}$ can be regarded as
the standard basis of $\End_\K(V)$ with respect to the basis
$\{e_{-m},\ldots, e_{-1},e_1,\ldots, e_m\}$ of $V$, i.e.,
$E_{ij}(e_k)=\delta_{jk}e_i$ for $k\in I(m|m)$. Following \cite{Ol},
we set
\begin{align}
\Theta&= \sqrt{-1} \sum_{1\leq a\leq m}(E_{-a,a}-E_{a,-a}),\notag\\
Q&=  \sum_{i,j\in I(m|m)}\text{sgn}(j)E_{ij}\otimes E_{ji},\notag\\
S&= v^{\hf}\sum_{i\leq j\in I(m|m)}S_{ij}\otimes E_{ij}\in\End_\K
(V^{\otimes 2}),
 \label{eq:operatorS}
\end{align}
where $S_{ij}$ are defined as follows:
\begin{align}
S_{aa}&= 1+(v^{\frac12}-1)(E_{aa}+E_{-a,-a}),\quad 1\leq a\leq m,\label{eq:S1}\\
S_{-a,-a}&=1+(v^{-\frac{1}{2}}-1)(E_{aa}+E_{-a,-a}),\quad1\leq a\leq m, \label{eq:S2}\\
S_{ab}&=(v^\hf -v^{-\hf})(E_{ba}+E_{-b,-a}),\quad 1\leq a<b\leq m, \label{eq:S3}\\
S_{-b,-a}&=-(v^\hf -v^{-\hf})(E_{ab}+E_{-a,-b}),\quad 1\leq a<b\leq m, \label{eq:S4}\\
S_{-b,a}&=-(v^\hf -v^{-\hf}) (E_{-a,b}+E_{a,-b}),\quad 1\leq a, b\leq m. \label{eq:S5}
\end{align}
We remark that our definition of the operator $S$ is  a  $v^{\frac
12}$ multiple of the original operator defined in \cite{Ol} (see
Remark~\ref{rem:twoHC}, \eqref{eq:OmeS} and
Proposition~\ref{prop:Ol} below).

To endomorphisms $A\in\End_\K(V)$ and
$C=\sum_{\alpha}A_{\alpha}\otimes B_{\alpha}\in\End_\K(V^{\otimes
2})$, we associate the following elements in $\End_\K(V^{\otimes
n})$:
\begin{align*}
A^{k}&={\rm I}^{\otimes k-1}\otimes A\otimes {\rm I}^{n-k},\\
C^{j,k}&=\sum_{\alpha}A_{\alpha}^{j}B_{\alpha}^k, \qquad 1\leq j\neq k\leq n.
\end{align*}

Olshanski \cite{Ol} introduced the quantum deformation $U_v(\q(m))$
of the universal enveloping algebra of $\q(m)$, which is a
$\K$-algebra with generators $L_{ij}$ for $i, j\in I(m|m)$ with
$i\le j$ subject to certain explicit relations (which we do not need
here).
%
%
Define $\Omega_n:U_v(\q(m))\rightarrow \End_\K (V^{\otimes n})$ by
letting
\begin{equation}  \label{eq:OmeS}
  \Omega_n(L_{ij})= S_{ij}, \qquad \text{ for } i\leq j\in I(m|m).
\end{equation}
Then it is known \cite{Ol} that $\Omega_n$ is an algebra
homomorphism and hence defines a representation $(\Omega_n,
V^{\otimes n})$ of $U_v(\q(m))$, which is a deformation of the
representation $(\omega_n, (\C^{m|m})^{\otimes n})$.

\begin{proposition}\cite[Theorems 5.2, 5.3]{Ol}
    \label{prop:Ol}
Let $\bar{S}=QS\in\End_\K (V^{\otimes 2})$. Then there exists a
representation $(\Psi_n, V^{\otimes n})$ of the Hecke-Clifford
algebra $\HCK$ defined by
$$
\Psi_n(T_j)=\bar{S}^{j,j+1},\quad \Psi_n(c_k)=\Theta^{k},
$$
where $1\leq j\leq n-1$ and $1\leq k\leq n$.
The algebras $\Omega_n(U_v(\q(m)))$ and $\Psi_n(\HCK)$ form mutual
centralizers in $\End_\K (V^{\otimes n})$. Moreover,  as a
$U_v(\q(m))\otimes\HCK$-module, we have a multiplicity-free
decomposition
$$
V^{\otimes n}\cong \bigoplus_{\la\in\mc{SP}_n,\ell(\la)\leq
m}2^{-\delta(\la)}V(\la)\otimes U^\la,
$$
where $V(\la)$'s are pairwise non-isomorphic irreducible
$U_v(\q(m))$-modules.
 \end{proposition}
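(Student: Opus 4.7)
The plan is to verify the three claims in order, deferring the reverse centralizer inclusion to a deformation argument from the classical Sergeev duality.

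First I would check that $\Psi_n$ respects the defining relations of $\HCK$. The Clifford relations reduce to $\Theta^2 = I$ (a two-line direct computation) together with the super sign rule \eqref{eq:superotimes} applied to the odd operator $\Theta$ on distinct tensor factors, which forces $\Theta^k\Theta^l = -\Theta^l\Theta^k$ for $k\neq l$. The disjointness relations $T_iT_j = T_jT_i$ for $|i-j|>1$ and $T_ic_j = c_jT_i$ for $j\notin\{i,i+1\}$ are immediate from the tensor structure. The substantive verifications are (i) the Hecke quadratic relation $\bar{S}^2 = (v-1)\bar{S} + v$ on $V^{\otimes 2}$, (ii) the braid relation $\bar{S}^{12}\bar{S}^{23}\bar{S}^{12} = \bar{S}^{23}\bar{S}^{12}\bar{S}^{23}$ on $V^{\otimes 3}$ (a super Yang-Baxter identity), and (iii) the twist relation $\bar{S}\,\Theta^1 = \Theta^2\,\bar{S}$. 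Each follows from a case analysis using the explicit formulas \eqref{eq:S1}--\eqref{eq:S5}, with the Yang-Baxter identity carrying most of the computational load.

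Next I would establish the inclusion $\Omega_n(U_v(\q(m))) \subseteq \End_{\HCK}(V^{\otimes n})$, which amounts to checking that each $S_{ij} = \Omega_n(L_{ij})$ supercommutes with every $\bar{S}^{k,k+1}$ and every $\Theta^\ell$; this reduces once more to identities on $V^{\otimes 2}$ or $V^{\otimes 3}$ verified case-by-case, respecting the super sign rule. For the reverse inclusion and the decomposition, my plan is to specialize to $v=1$ and deform. By Proposition~\ref{prop:Sergeev} the classical tensor space decomposes as $\bigoplus_\la 2^{-\delta(\la)} V_1(\la) \otimes U^\la_1$. Since $\HCK$ is split semisimple by Proposition~\ref{prop:JN} and $\Psi_n$ is a flat deformation of $\psi_n$, the multiplicities of each irreducible $U^\la$ in $V^{\otimes n}$ are preserved under specialization; writing $V^{\otimes n}\cong\bigoplus_\la W(\la)\otimes U^\la$ as an $\HCK$-module, the image $\Omega_n(U_v(\q(m)))$ preserves each summand and turns $W(\la)$ into a $U_v(\q(m))$-module. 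Lemma~\ref{tensorsmod}, combined with the known type of $U^\la$ (type $\texttt{M}$ when $\delta(\la)=0$, type $\texttt{Q}$ when $\delta(\la)=1$), forces $W(\la) = 2^{-\delta(\la)} V(\la)$ for a split irreducible $U_v(\q(m))$-module $V(\la)$; distinct $\la$ must yield non-isomorphic $V(\la)$, for otherwise the double centralizer property would fail at $v=1$. The super version of the double centralizer theorem then delivers the reverse inclusion $\End_{\HCK}(V^{\otimes n}) = \Omega_n(U_v(\q(m)))$.

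The main obstacle I anticipate is the super sign bookkeeping in the braid-type computations: both the Yang-Baxter identity for $\bar{S}$ and the intertwining of $\bar{S}$ with the $S_{ij}$ proliferate into many cases because of the index split $I(m|m) = \{-m,\ldots,-1,1,\ldots,m\}$ and the Clifford twist $\Theta$. Exploiting the manifest symmetry between the $a$ and $-a$ rows in \eqref{eq:S1}--\eqref{eq:S5}, and factoring out the Clifford piece whenever possible, should keep the verification within reach.
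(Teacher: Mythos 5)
First, a point of comparison: the paper does not prove this proposition at all --- it is imported wholesale from Olshanski \cite{Ol} (Theorems 5.2, 5.3), so there is no internal argument to measure your outline against; it has to stand on its own. Its overall shape (verify the $\HCK$-relations for $\bar S$ and $\Theta$, check one centralizer inclusion, then deform from the $v=1$ Sergeev duality of Proposition~\ref{prop:Sergeev}) is reasonable and close in spirit to how such dualities are actually established, but two steps are genuinely gapped.

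The first gap is in how you propose to check that the image of $U_v(\q(m))$ centralizes $\Psi_n(\HCK)$. The operator $\Omega_n(L_{ij})$ on $V^{\otimes n}$ is \emph{not} the single-factor operator $S_{ij}$ placed in some slot --- a one-factor operator cannot supercommute with $\bar S^{k,k+1}$, which mixes two factors; the action of $U_v(\q(m))$ on $n$ factors goes through Olshanski's iterated coproduct/$L$-operator construction, and the paper's equation \eqref{eq:OmeS} is only shorthand for it (indeed the paper explicitly declines to record the relations of $U_v(\q(m))$). So the statement ``each $S_{ij}$ supercommutes with every $\bar S^{k,k+1}$ and every $\Theta^\ell$'' is not the identity you need; you must first write down the $n$-fold action and reduce its commutation with $\bar S^{j,j+1}$ to an intertwining property of $\bar S$ for the coproduct on $V\otimes V$, which is precisely the content of Olshanski's construction and cannot be replaced by factor-wise case analysis.

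The second and more serious gap is the passage from ``the image lies in the centralizer'' to the mutual-centralizer statement and the multiplicity-free decomposition. Writing $V^{\otimes n}\cong\bigoplus_\la W(\la)\otimes U^\la$ over $\HCK$, the multiplicity space $W(\la)$ is automatically irreducible over the \emph{full} centralizer $\End_{\HCK}(V^{\otimes n})$, but what you need is irreducibility over the subalgebra $\Omega_n(U_v(\q(m)))$ --- and that assertion is equivalent to the reverse inclusion you are trying to prove. Lemma~\ref{tensorsmod} together with the known type of $U^\la$ cannot ``force'' $W(\la)$ to be split irreducible over $U_v(\q(m))$: that lemma describes tensor products of irreducibles over a tensor product of superalgebras and says nothing about a module over a subalgebra of the centralizer; invoking the super double centralizer theorem at the end is then circular. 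To close this you need an actual dimension/semicontinuity argument: show via an integral ($\bf A$-lattice) form and character specialization (in the style of the proof of Proposition~\ref{prop:generaltrace}, using Proposition~\ref{prop:JN}) that the centralizer has the same dimension at generic $v$ as at $v=1$, and that the image of $\Omega_n$ at generic $v$ has dimension at least that of its $v=1$ specialization, which is the full centralizer by Proposition~\ref{prop:Sergeev}; only then do irreducibility of the $W(\la)$, their pairwise non-isomorphy, and the decomposition follow. The remark ``otherwise the double centralizer property would fail at $v=1$'' presupposes exactly this specialization machinery rather than supplying it.
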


The operator $D$ in \eqref{eq:D} commutes with the action of $\HCK$
on $V^{\otimes n}$, since the action of $\HCK$ preserves the weight
space decomposition. We have the following generalization of
Corollary~\ref{cor:Frobenius}.

\begin{proposition}\label{prop:generaltrace}
For  $h\in\HCK$, we have
$$
{\rm tr}(Dh)=\sum_{\la\in\mc{SP}_n, \ell(\la)\leq
m}2^{-\frac{\ell(\la)+\delta(\la)}{2}}Q_{\la}(x_1,\ldots,
x_m)\zeta^{\la}(h).
$$
\end{proposition}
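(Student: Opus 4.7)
The plan is to mimic the derivation of Corollary~\ref{cor:Frobenius} from Proposition~\ref{prop:Sergeev}, using Olshanski's quantum duality (Proposition~\ref{prop:Ol}) in place of the classical Sergeev duality.

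By Proposition~\ref{prop:Ol} we have the $U_v(\q(m))\otimes\HCK$-module decomposition $V^{\otimes n}\cong\bigoplus_{\la} 2^{-\delta(\la)}V(\la)\otimes U^\la$. Since $D$ acts diagonally on the standard weight basis $\{e_{i_1}\otimes\cdots\otimes e_{i_n}\}$ of $V^{\otimes n}$ with eigenvalue $x_{|i_1|}\cdots x_{|i_n|}$, and the $\HCK$-action preserves this weight decomposition, $D$ commutes with $\Psi_n(\HCK)$ and the operator $Dh$ respects each summand $V(\la)\otimes U^\la$. Factoring the trace over tensor products gives
\begin{equation}\label{eq:PlanFact}
{\rm tr}_{V^{\otimes n}}(Dh)=\sum_{\la\in\mc{SP}_n,\,\ell(\la)\leq m}2^{-\delta(\la)}\bigl({\rm ch}\,V(\la)\bigr)(x_1,\ldots,x_m)\cdot\zeta^\la(h).
\end{equation}

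It remains to identify
\begin{equation}\label{eq:PlanChar}
{\rm ch}\,V(\la)(x_1,\ldots,x_m)=2^{-(\ell(\la)-\delta(\la))/2}\,Q_\la(x_1,\ldots,x_m),
\end{equation}
which is precisely the classical character formula \eqref{eqn:queer.ch} for $V_1(\la)$. To justify \eqref{eq:PlanChar}, I would argue that the weight space decomposition of $V^{\otimes n}$ under $\ev\h$ is purely combinatorial and independent of $v$; both the classical decomposition of Proposition~\ref{prop:Sergeev} and the quantum one of Proposition~\ref{prop:Ol} split $V^{\otimes n}$ into $U_v(\q(m))\otimes\HCK$-isotypic components compatible with this weight filtration. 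Since $\dim_\K U^\la=\dim_\C U^\la_1$ by the $v=1$ specialization in Proposition~\ref{prop:JN}, comparing weight multiplicities in the two decompositions (inductively on dominance order of highest weights) yields $\dim V(\la)_\mu=\dim V_1(\la)_\mu$ for every weight $\mu$, and \eqref{eq:PlanChar} follows from \eqref{eqn:queer.ch}.

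Plugging \eqref{eq:PlanChar} into \eqref{eq:PlanFact} and using $2^{-\delta(\la)}\cdot 2^{-(\ell(\la)-\delta(\la))/2}=2^{-(\ell(\la)+\delta(\la))/2}$ yields the asserted formula. The main obstacle is the character identity \eqref{eq:PlanChar}: although one expects characters of irreducible highest weight modules to be rigid under quantum deformation of $\q(m)$, some care is needed since the representation theory of $U_v(\q(m))$ is subtler than that of Drinfeld--Jimbo quantum groups. The matching is however enforced by the multiplicity-free nature of Olshanski's duality combined with the $v=1$ specialization, which pins down the weight multiplicities of each $V(\la)$ from those of $V_1(\la)$.
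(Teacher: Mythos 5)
Your overall strategy---factor ${\rm tr}(Dh)$ through the multiplicity-free decomposition of Proposition~\ref{prop:Ol} and identify the coefficient of $\zeta^\la(h)$---is essentially the paper's, and the factorization step itself is fine (the paper does the same thing weight space by weight space). The gap is in your justification of the character identity ${\rm ch}\,V(\la)=2^{-(\ell(\la)-\delta(\la))/2}Q_\la(x_1,\ldots,x_m)$, which is precisely the crux. Proposition~\ref{prop:Ol}, as stated and used in this paper, tells you nothing about the $U_v(\q(m))$-modules $V(\la)$ beyond their being pairwise non-isomorphic and irreducible: it does not assert that $V(\la)$ is a highest weight module of highest weight $\la$, nor that its weights are dominated by $\la$, so your proposed induction ``on dominance order of highest weights'' has no structure to run on. Moreover, even granting such a triangular weight structure, comparing the weight multiplicities of $V^{\otimes n}$ at generic $v$ with those at $v=1$ yields, for each weight $\mu$, only the single relation $\sum_{\la}2^{-\delta(\la)}\dim U^\la\,\bigl(\dim V(\la)_\mu-\dim V_1(\la)_\mu\bigr)=0$; since several $\la$ contribute to each $\mu$, this does not force the individual differences to vanish. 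So the rigidity statement $\dim V(\la)_\mu=\dim V_1(\la)_\mu$ does not follow from the inputs you cite.

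The paper closes exactly this gap without analyzing $V(\la)$ at all: it writes ${\rm tr}\,Dh|_{(V^{\otimes n})_\mu}=x^\mu\sum_\la f_{\la\mu}\zeta^\la(h)$ with multiplicities $f_{\la\mu}\in\Z_+$ independent of $v$, sums over $\mu$ to get ${\rm tr}(Dh)=\sum_\la f_\la(x)\zeta^\la(h)$, specializes this identity at $v=1$ for $h\in\mf H^c_n$ (where $\zeta^\la$ becomes $\zeta^\la_1$ by Proposition~\ref{prop:JN}), compares with Corollary~\ref{cor:Frobenius}, and invokes the linear independence of the classical irreducible characters $\zeta_1^\la$ to pin down $f_\la(x)=2^{-\frac{\ell(\la)+\delta(\la)}{2}}Q_\la(x)$ for $\ell(\la)\le m$ and $f_\la=0$ otherwise. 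This character-theoretic specialization gives one equation per $\la$ (per monomial), which is exactly the leverage your dimension count lacks. Your closing sentence gestures at the $v=1$ specialization but does not carry it out; replacing your weight-multiplicity induction by this specialization-plus-linear-independence argument repairs the proof, at which point it coincides with the paper's.
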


\begin{proof}
Fix $\mu=(\mu_1,\ldots,\mu_m)\in\mc{CP}_n$. Then the weight subspace
$(V^{\otimes n})_{\mu}$ of $V^{\otimes n}$ has a basis given by
$e_{i_1}\otimes\cdots\otimes e_{i_n}$ with $i_1,\ldots,
i_n\in I(m|m)$ and $\sharp\{j||i_j|=k\}=\mu_k$ for $1\leq k\leq m$. The
operator $D$ acts on $V^{\otimes n}_{\mu}$ as $(x_1^{\mu_1}\cdots
x_m^{\mu_m})\cdot{\rm I}$. On the other hand, $(V^{\otimes
n})_{\mu}$ is stable under the action of $\HCK$ and hence it can be
decomposed as the direct sum of the irreducible module $U^{\la}$.
This implies that the trace of $Dh$ on $(V^{\otimes n})_{\mu}$ can
be written as
$$
{\rm tr} Dh|_{(V^{\otimes n})_{\mu}}=x_1^{\mu_1}\cdots
x_m^{\mu_m}\sum_{\la\in\mc{SP}_n}f_{\la\mu}\zeta^{\la}(h)
$$
for some $f_{\la\mu}\in\Z_+$.
This holds for all $\mu=(\mu_1,\ldots,\mu_m)\in\mc{CP}_n$ and hence
$$
{\rm tr}(Dh)=\sum_{\la\in\mc{SP}_n}f_\la(x_1,\ldots, x_m) \zeta^{\la}(h),
$$
where $f_{\la}(x_1,\ldots,
x_m)=\sum_{\mu=(\mu_1,\ldots,\mu_m)\in\mc{CP}_n}f_{\la\mu}x^{\mu_1}_1\ldots
x^{\mu_m}_m$. Specializing $v=1$ and using
Proposition~\ref{prop:JN}, we obtain that, for $h\in\mf H^c_n$,
$$
{\rm tr}(Dh)=\sum_{\la\in\mc{SP}_n}f_\la(x_1,\ldots, x_m) \zeta^{\la}_1(h).
$$
Comparing with Corollary~\ref{cor:Frobenius} and noting the linear
independence of irreducible characters $\zeta_1^{\la}$ for $\la \in
\mc{SP} _n$, one can deduce that
 $
 f_{\la}(x_1,\ldots, x_m)=Q_{\la}(x_1,\ldots,x_m)
 $
if $\ell(\la)\leq m$ and $f_{\la}(x_1,\ldots, x_m)=0$ otherwise.
\end{proof}

\section{A Frobenius  type character formula}
\label{sec:char}

In this section, we shall formulate and establish a Frobenius type
formula for the irreducible characters of the Hecke-Clifford
algebra.

\subsection{A  formula for ${\rm tr}(DT_{w_{(n)}})$}


Recall that  $\bar{S}=QS\in\End(V^{\otimes 2})$.
\begin{lemma}  \label{lem:Sbar}
The following formula holds
for $k, \ell\in I(m|m)$:
\begin{align*}
&\bar{S}(e_k\otimes e_{\ell})= \\
&\left\{
\ds\begin{array}{llllllll}
ve_{\ell}\otimes e_k+(v-1)e_{-k}\otimes e_{-\ell},&\text{ if }k=\ell\geq 1,\\
-e_{\ell}\otimes e_k, &\text{ if }k=\ell\leq -1,\\
e_{\ell}\otimes e_k, &\text{ if }k=-\ell\geq 1,\\
ve_{\ell}\otimes e_k+(v-1)e_k\otimes e_{\ell},&\text{ if }k=-\ell\leq -1,\\
v^{\frac 12}e_{\ell}\otimes e_k+(v-1)e_{-k}\otimes e_{-\ell}+(v-1)e_k\otimes e_{\ell},
 &\text{ if } |k|<|\ell|\text{ and }\ell\geq 1,\\
v^{\frac 12}{\rm sgn}(k)e_{\ell}\otimes e_k, &\text{ if } |k|<|\ell|
 \text{ and }\ell\leq -1,\\
v^{\frac 12}e_{\ell}\otimes e_k+{\rm sgn}(\ell)(v-1)e_{-k}\otimes e_{-\ell},
 &\text{ if }|\ell|<|k|\text{ and }k\geq 1,\\
{\rm sgn}(\ell) e_{\ell}\otimes e_k+(v-1)e_k\otimes e_{\ell},&\text{ if }|\ell|<|k|
 \text{ and }k\leq -1.
\end{array}
\right.
\end{align*}
\end{lemma}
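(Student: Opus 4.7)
The proof will be a direct case-by-case computation from the definitions, and I would organize it around two preliminary observations so that the eight cases of the statement become essentially mechanical.

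First, I would write down once and for all how $Q$ acts on a basis tensor. Using the super-tensor rule $(A\otimes B)(v\otimes w)=(-1)^{|B|\,|v|}Av\otimes Bw$, a short calculation yields $Q(e_p\otimes e_q)=\eta(p,q)\,e_q\otimes e_p$ for an explicit sign $\eta(p,q)\in\{\pm 1\}$ depending only on $\text{sgn}(p)$ and on the $\Z_2$-degrees of $e_p$ and $e_q$ (with the grading on $V$ fixed by the requirement that $\Theta$ be odd, compare \eqref{eq:matrixP}). Having this closed form available removes the need to resum the double sum defining $Q$ in each case.

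Second, since $E_{ij}(e_\ell)=\delta_{j\ell}\,e_i$, the definition of $S$ collapses to
\[
S(e_k\otimes e_\ell)=v^{\frac12}\sum_{i\le\ell}(-1)^{|E_{i\ell}|\,|e_k|}\,S_{i\ell}(e_k)\otimes e_i.
\]
Inspecting the support of \eqref{eq:S1}--\eqref{eq:S5}, only a very small number of $i\le\ell$ produce a nonzero $S_{i\ell}(e_k)$, and exactly which ones (together with which of the five formulas \eqref{eq:S1}--\eqref{eq:S5} is relevant) is governed by the sign of $\ell$ and the position of $k$ relative to $\ell$ --- this is precisely what partitions the problem into the eight cases of the lemma. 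For each case I would enumerate the surviving indices $i$, substitute the appropriate line of \eqref{eq:S1}--\eqref{eq:S5}, evaluate on $e_k$ via $E_{pq}(e_k)=\delta_{qk}e_p$, and finally apply the closed formula for $Q$ from the first step to assemble $\bar S(e_k\otimes e_\ell)=Q\,S(e_k\otimes e_\ell)$. In each case one of the eight expressions stated in the lemma drops out.

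The main pitfall is sign bookkeeping: the super-tensor rule contributes signs in both the $S$-step and the $Q$-step, and some of the generators (notably $S_{-b,a}$) are odd operators. Computing $\eta(p,q)$ once and being explicit about the parity of the basis $\{e_i\}$ should keep these signs under control; beyond that, the computation is entirely routine.
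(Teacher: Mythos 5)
Your proposal is correct and follows essentially the same route as the paper: reduce $S(e_k\otimes e_\ell)$ to the single sum $v^{1/2}\sum_{i\le\ell}(-1)^{|E_{i\ell}||e_k|}S_{i\ell}(e_k)\otimes e_i$, then split into the eight cases according to the relative positions and signs of $k$ and $\ell$, and finally apply $Q$. The only (harmless) organizational difference is that you precompute the closed-form signed-swap action of $Q$ once up front, whereas the paper applies $Q$ to the fully expanded $S(e_k\otimes e_\ell)$ within each case.
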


\begin{proof}
By \eqref{eq:operatorS}, we compute that
\begin{align}  \label{eq:actionS}
&v^{-\hf} S(e_k\otimes e_{\ell}) \\
&=  \sum_{i\leq j\in I(m|m)} (S_{ij}\otimes E_{ij})(e_k\otimes e_{\ell})
\notag \\
&= \sum_{i\leq j\in I(m|m)}(-1)^{|E_{ij}|\cdot |e_k|}S_{ij}(e_k)\otimes E_{ij}(e_{\ell})\notag\\
&= \sum^{\ell}_{i=-m}(-1)^{|E_{i\ell }|\cdot |e_k|}S_{i\ell }(e_k) \otimes e_i\notag\\
&=\left\{
\begin{array}{ll}
 S_{\ell\ell}(e_k)\otimes e_{\ell}+\sum_{1\leq i<\ell}S_{i\ell }(e_k)\otimes e_i
+\sum_{-m\leq i\leq -1}(-1)^{|e_k|}S_{i\ell}(e_k)\otimes e_i,
 &\text{ if }\ell\geq 1,\\
S_{\ell\ell}(e_k)\otimes e_{\ell}+\sum_{-m\leq i<\ell}S_{i\ell }(e_k)\otimes e_i,
 &\text{ if }\ell\leq -1.
\end{array}
\right.  \notag
\end{align}
Then the lemma is proved case-by-case using the definition of
$S_{ij}$ given in~\eqref{eq:S1}-\eqref{eq:S5}. Let us illustrate by
checking in detail the case when $|k|<|\ell|, \ell\geq 1$. In this
case, we have either $1\leq k<\ell \leq m$ or $-\ell<k\leq -1$. If
$1\leq k<\ell \leq m$, then it follows by \eqref{eq:S3} and
\eqref{eq:S5} that
\begin{align*}
v^\hf  \sum_{1\leq i<\ell}S_{i\ell }(e_k)\otimes e_i
&=(v-1)e_{\ell}\otimes e_k,
 \\
v^\hf \sum_{-m\leq i\leq -1}(-1)^{|e_k|}S_{i\ell}(e_k)\otimes e_i
&=-(v-1)e_{-\ell}\otimes e_{-k},
\end{align*}
and hence by \eqref{eq:actionS} we obtain that
$$
S(e_k\otimes e_{\ell})=
v^\hf e_k\otimes e_{\ell}+(v-1)e_{\ell}\otimes e_k-(v-1)e_{-\ell}\otimes e_{-k}.
$$
Therefore,
\begin{align*}
\bar{S}(e_{k}\otimes e_{\ell})
=&Q \big(v^\hf e_k\otimes e_{\ell}+(v-1)e_{\ell}\otimes e_k-(v-1)e_{-\ell}\otimes e_{-k}
 \big)\\
=&v^\hf e_{\ell}\otimes e_k +(v-1)e_k\otimes e_{\ell}+(v-1)e_{-k}\otimes e_{-\ell}.
\end{align*}
If $-\ell<k\leq -1$, then by~\eqref{eq:S3} and \eqref{eq:S5} we have
\begin{align*}
v^\hf \sum_{1\leq i<\ell}S_{i\ell }(e_k)\otimes e_i
&=(v-1)e_{-\ell}\otimes e_{-k},
  \\
v^\hf \sum_{-m\leq i\leq -1}(-1)^{|e_k|}S_{i\ell}(e_k)\otimes e_i
&=(v-1)e_{\ell}\otimes e_k,
\end{align*}
and hence by~\eqref{eq:actionS} we have
$$
S(e_k\otimes e_{\ell})=
v^\hf e_k\otimes e_{\ell}+(v-1)e_{-\ell}\otimes e_{-k}+(v-1)e_{\ell}\otimes e_k.
$$
Therefore,
$$
\bar{S}(e_{k}\otimes e_{\ell})=
v^\hf e_{\ell}\otimes e_k+(v-1)e_{-k}\otimes e_{-\ell}+(v-1)e_k\otimes e_{\ell}.
$$
The remaining cases can be verified similarly, and we skip the detail.
\end{proof}

For a composition $\ga=(\ga_1,\ldots,\ga_{\ell})$ of $n$, let
\begin{align}
T_{{\ga,j}}&= T_{\ga_1+\ldots+\ga_{j-1}+1}
T_{\ga_1+\ldots+\ga_{j-1}+2} \cdots T_{\ga_1+\cdots+\ga_j-1},
 \quad 1\leq j\leq \ell,
   \notag \\
T_{w_{\ga}}&=T_{{\ga,1}}T_{{\ga,2}}\cdots T_{{\ga,{\ell}}}.
  \label{eq:Twga}
\end{align}
Equivalently, $T_{w_{\ga}}$ is the Hecke algebra element corresponding to the permutation
\begin{equation}  \label{eq:wga}
w_{\ga}=(1,\ldots,\ga_1)(\ga_1+1,\ldots, \ga_1+\ga_2)
\cdots(\ga_1+\cdots+\ga_{\ell-1}+1,\ldots, \ga_1+\cdots+\ga_{\ell}).
\end{equation}
For $\uni=(i_1,i_2,\ldots,i_n)\in I(m|m)^n$ satisfying $ i_1\leq
i_2\leq\cdots\leq i_n$,  denote
\begin{align*}
f(\uni)&=\sharp\{1\leq k\leq n | i_k=i_{k+1}\geq 1\},\\
g(\uni)&=\sharp\{1\leq k\leq n | i_k=i_{k+1}\leq -1\},\\
h(\uni)&=\sharp\{1\leq k\leq n | i_k<i_{k+1}\}.
\end{align*}
We denote by $T(u)|_u$ the coefficient of $u$ in the linear
expansion of $T(u)$ in terms of the basis
$\{e_{i_1}\otimes\cdots\otimes e_{i_n} \mid i_1,\ldots, i_n\in
I(m|m)\}$ of $V^{\otimes n}$, for a linear operator $T\in\End_\K
(V^{\otimes n})$ and a basis element $u$.

\begin{lemma}\label{lem:trace1}
The trace of the operator $DT_{w_{(n)}}$
on $V^{\otimes n}$ is given by
$$
{\rm tr}(DT_{w_{(n)}})
=\sum_{\uni}v^{f(\uni)}(-1)^{g(\uni)}(v-1)^{h(\uni)}x_{|i_1|}\cdots x_{|i_n|},
$$
where the summation is over the $n$-tuples $\uni=(i_1,i_2,\ldots,i_n)\in I(m|m)^n$
which satisfy $i_1\leq i_2\leq\cdots\leq i_n$.
\end{lemma}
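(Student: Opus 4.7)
The plan is to compute the diagonal matrix coefficients of $D\,\Psi_n(T_{w_{(n)}})$ on the standard basis of $V^{\otimes n}$ directly. Since $T_{w_{(n)}}=T_1T_2\cdots T_{n-1}$, we have $\Psi_n(T_{w_{(n)}})=\bar S^{1,2}\bar S^{2,3}\cdots \bar S^{n-1,n}$, while $D$ acts diagonally on $e_{j_1}\otimes\cdots\otimes e_{j_n}$ with eigenvalue $x_{|j_1|}\cdots x_{|j_n|}$. Hence ${\rm tr}(DT_{w_{(n)}})$ equals the sum over all $\underline j=(j_1,\ldots,j_n)\in I(m|m)^n$ of $x_{|j_1|}\cdots x_{|j_n|}$ times the coefficient $c_{\underline j}$ of $e_{j_1}\otimes\cdots\otimes e_{j_n}$ inside $\bar S^{1,2}\cdots\bar S^{n-1,n}(e_{j_1}\otimes\cdots\otimes e_{j_n})$.

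The key structural observation is that these factors are applied right-to-left and $\bar S^{k,k+1}$ is the \emph{last} operator in the product that touches tensor slot $k+1$; once it acts, slot $k+1$ remains frozen. Consequently, a non-zero contribution to $c_{\underline j}$ requires that $\bar S^{k,k+1}$ leave $e_{j_{k+1}}$ in slot $k+1$. A direct inspection of the eight cases in Lemma~\ref{lem:Sbar} shows that the only terms of $\bar S(e_a\otimes e_b)$ whose second tensor factor equals $e_b$ are scalar multiples of $e_a\otimes e_b$ itself, with coefficient
$$\kappa(a,b)=\begin{cases} v & \text{if } a=b\ge 1,\\ -1 & \text{if } a=b\le -1,\\ v-1 & \text{if } a<b,\\ 0 & \text{if } a>b. \end{cases}$$
(For example, when $|a|<|b|$ and $b\le -1$, the only term $\bar S(e_a\otimes e_b)=v^{1/2}\,{\rm sgn}(a)\,e_b\otimes e_a$ has second factor $e_a\ne e_b$; the other seven cases are equally mechanical.)

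Iterating this observation from right to left in the product, we obtain $c_{\underline j}=\prod_{k=1}^{n-1}\kappa(j_k,j_{k+1})$. This vanishes unless $j_1\le j_2\le\cdots\le j_n$, and otherwise equals $v^{f(\underline j)}(-1)^{g(\underline j)}(v-1)^{h(\underline j)}$ by partitioning the $n-1$ adjacent-pair factors according to the three non-zero cases of $\kappa$. Summing $c_{\underline j}\,x_{|j_1|}\cdots x_{|j_n|}$ over weakly increasing tuples then yields the claimed formula. The only step demanding real care is the case analysis producing $\kappa$ and the ``only $e_a\otimes e_b$ survives in the second-factor-fixed part'' assertion; but this is bookkeeping directly from Lemma~\ref{lem:Sbar}, so I anticipate no conceptual obstacle beyond this verification.
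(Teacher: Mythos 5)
Your proposal is correct and follows essentially the same route as the paper: the paper also computes the diagonal coefficient $(DT_{w_{(n)}}u)|_u$ by peeling off the rightmost factor (writing $T_{w_{(n)}}=T_{w_{(n-1)}}T_{n-1}$ and inducting on $n$), using exactly the observation that only the term of $\bar S(e_{i_{n-1}}\otimes e_{i_n})$ fixing the last slot survives, with the same coefficients $v$, $-1$, $v-1$, $0$ read off from Lemma~\ref{lem:Sbar}. Your right-to-left iteration with the product $\prod_k\kappa(j_k,j_{k+1})$ is just an unwound form of that induction, and your explicit remark that the surviving term is a scalar multiple of $e_a\otimes e_b$ itself (so slot $k$ is also preserved) is precisely what makes the iteration legitimate.
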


\begin{proof}
It suffices to show that, for $u=e_{i_1}\otimes e_{i_2}\otimes\cdots\otimes e_{i_n}$,
\begin{align*}
(DT_{w_{(n)}} u)|_u
=\left\{
\begin{array}{ll}
v^{f(\uni)}(-1)^{g(\uni)}v^{h(\uni)}x_{|i_1|}\cdots x_{|i_n|},
&\text{ if } i_1\leq i_2\leq\cdots\leq i_n,\\
0, &\text{ otherwise}.
\end{array}
\right.
\end{align*}
By definition,  we have $T_{w_{(n)}} =T_{w_{(n-1)}} T_{n-1}$.
It follows by Proposition~\ref{prop:Ol} and Lemma~ \ref{lem:Sbar} that
\begin{align*}
&(DT_{w_{(n)}} u)|_u\\
=&x_{|i_1|}\cdots x_{|i_n|}\cdot (T_{w_{(n)}} u)|_u\\
=&x_{|i_1|}\cdots x_{|i_n|}\cdot T_{w_{(n-1)}}
\big(e_{i_1}\otimes\cdots\otimes e_{i_{n-2}}\otimes
 \bar{S}  (e_{i_{n-1}}\otimes e_{i_n}) \big)|_u\\
=&\left\{
\begin{array}{llll}
v  x_{|i_1|}\cdots x_{|i_n|}  T_{w_{(n-1)}}(e_{i_1}\otimes\cdots\otimes
e_{i_{n-1}})|_{e_{i_1}\otimes\cdots\otimes e_{i_{n-1}}},
&\text{ if }i_{n-1}=i_n\geq 1,\\
-  x_{|i_1|}\cdots x_{|i_n|}  T_{w_{(n-1)}}(e_{i_1}\otimes\cdots\otimes
 e_{i_{n-1}})|_{e_{i_1}\otimes\cdots\otimes e_{i_{n-1}}},
&\text{ if }i_{n-1}=i_n\leq -1,\\
(v-1) x_{|i_1|}\cdots x_{|i_n|}
 T_{w_{(n-1)}}(e_{i_1}\otimes\cdots\otimes
 e_{i_{n-1}})|_{e_{i_1}\otimes\cdots\otimes e_{i_{n-1}}},
&\text{ if }i_{n-1}<i_n,\\
0,&\text{ otherwise}.
\end{array}
\right.
\end{align*}
Now the lemma follows by induction on $n$.
\end{proof}

For $s \ge 1$ and a composition $\alpha=(\alpha_1, \alpha_2, \ldots,\alpha_{\ell})$, set
\begin{align*}
\Delta_0&=1, \\
 \Delta_s&=v^{s-1}+(-1)^{s-1}+(v-1)\sum_{t=1}^{s-1}v^{t-1}(-1)^{s-t-1}
 =\frac{2(v^s-(-1)^s)}{v+1},
  \\
 \Delta_{\alpha}&=\Delta_{\alpha_1}\Delta_{\alpha_2}\cdots\Delta_{\alpha_{\ell}}.
\end{align*}
Let $m_\mu$ denote the monomial symmetric function associated to a partition $\mu$.

\begin{proposition} \label{prop:trace}
The trace of the operator $DT_{w_{(n)}}$
on $V^{\otimes n}$ is given by
$$
{\rm tr}(DT_{w_{(n)}})=\sum_{\mu\in\mc{P}_n}
\Delta_{\mu}(v-1)^{\ell(\mu)-1}m_{\mu}(x_1,\ldots,x_m).
$$
\end{proposition}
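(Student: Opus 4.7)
The plan is to start from Lemma~\ref{lem:trace1} and reorganize the sum according to the multiset of absolute values $|i_j|$. For each weakly increasing tuple $\uni=(i_1,\ldots,i_n)\in I(m|m)^n$, record the composition $\mu=(\mu_1,\ldots,\mu_m)\in\mc{CP}_n$ defined by $\mu_k=\sharp\{j\mid |i_j|=k\}$, and for each $k$ with $\mu_k\geq 1$ let $a_k=\sharp\{j\mid i_j=k\}$ and $b_k=\mu_k-a_k$. In the natural integer ordering $-m<\cdots<-1<1<\cdots<m$, the pair $(\mu,(a_k,b_k))$ reconstructs $\uni$ uniquely: $b_m$ copies of $-m$, then $b_{m-1}$ copies of $-(m-1)$, and so on, ending with $a_m$ copies of $+m$.

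Next I would translate the three statistics into the new parameters. A direct count gives $f(\uni)=\sum_k\max(a_k-1,0)$ and $g(\uni)=\sum_k\max(b_k-1,0)$, while $h(\uni)$ equals the number of non-empty blocks minus one, namely $\ell(\mu)+J-1$ where $J=\sharp\{k:a_k\geq 1\text{ and }b_k\geq 1\}$. Consequently the weight factors as
\[
v^{f(\uni)}(-1)^{g(\uni)}(v-1)^{h(\uni)}=(v-1)^{\ell(\mu)-1}\prod_{k:\mu_k\geq 1}w_k,
\]
where $w_k=v^{\max(a_k-1,0)}(-1)^{\max(b_k-1,0)}(v-1)^{[a_k\geq 1\text{ and }b_k\geq 1]}$ absorbs the equalities within the $\pm k$ blocks together with the single strict jump from $-k$ to $+k$ that occurs when both signs are present.

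Then I would sum $w_k$ over all decompositions $a_k+b_k=\mu_k$ with $a_k,b_k\geq 0$: the three cases $b_k=0$, $a_k=0$, and $a_k,b_k\geq 1$ contribute respectively $v^{\mu_k-1}$, $(-1)^{\mu_k-1}$, and $(v-1)\sum_{t=1}^{\mu_k-1}v^{t-1}(-1)^{\mu_k-t-1}$, whose total is exactly the defining sum for $\Delta_{\mu_k}$. Taking the product over $k$ with $\mu_k\geq 1$ yields $\Delta_\mu$, so the contribution to ${\rm tr}(DT_{w_{(n)}})$ from tuples of type $\mu\in\mc{CP}_n$ is $(v-1)^{\ell(\mu)-1}\Delta_\mu\cdot x_1^{\mu_1}\cdots x_m^{\mu_m}$. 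Regrouping compositions by their underlying partition replaces the monomial by $m_\mu(x_1,\ldots,x_m)$, yielding the stated identity.

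The main obstacle is extracting the clean factorization: one must recognize that the external factor $(v-1)^{\ell(\mu)-1}$ records the jumps between blocks of distinct $|i|$-values, and that the remaining $(v-1)$-factors belong to the per-$k$ local weight, so that the sum over $(a_k,b_k)$ reproduces the definition of $\Delta_{\mu_k}$. Once this separation is in place, the remaining verification is a routine matching against the formula defining $\Delta_s$.
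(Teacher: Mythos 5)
Your proposal is correct and follows essentially the same route as the paper: both decompose the weakly increasing tuples of a given type $\mu$ according to the split of each multiplicity $\mu_k$ into its positive and negative parts (your $(a_k,b_k)$ are the paper's $(\beta_k^+,\beta_k^-)$), translate $f,g,h$ into these parameters, factor the resulting sum into the per-$k$ sums defining $\Delta_{\mu_k}$, and then regroup compositions into partitions to produce $m_\mu$. The only difference is cosmetic bookkeeping of the $(v-1)$-powers, which the paper handles with Kronecker-delta exponents and you handle by splitting $h=(\ell(\mu)-1)+J$.
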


\begin{proof}
Given  $\alpha=(\alpha_1,\ldots,\alpha_m)\in\mc{CP}_n$, denote by
$\Ga(\alpha)$ the set consisting of $\uni=(i_1,\ldots, i_n)\in
I(m|m)^n$ satisfying $i_1\leq\cdots\leq i_n$ and
$\sharp\{j|~|i_j|=k\}=\alpha_k$ for $1\leq k\leq m$. By
Lemma~\ref{lem:trace1}, the coefficient  of the monomial
$x^{\alpha}:=x_1^{\alpha_1}\cdots x_m^{\alpha_m}$ in ${\rm
tr}(DT_{w_{(n)}})$, denoted by ${\rm
tr}(DT_{w_{(n)}})|_{x^{\alpha}}$,  is given by
$$
{\rm tr}(DT_{w_{(n)}})|_{x^{\alpha}}
=\sum_{\uni\in\Ga(\alpha)}v^{f(\uni)}(-1)^{g(\uni)}(v-1)^{h(\uni)}.
$$
Now for $\uni=(i_1,\ldots, i_n)$, we let
$$
\beta_k^+=\{j|i_j=k\},~ \beta_k^-=\{j| i_j=-k\}, \qquad 1\leq k\leq
m.
$$
Then each $\uni$ in  $\Ga(\alpha)$ is uniquely determined by the
pair $(\beta^+,\beta^-)\in\Z_+^m\times\Z_+^m$, where
$\beta^{\pm}=(\beta_1^{\pm}, \ldots,\beta_m^{\pm})$  satisfies
$\beta_k^++\beta_k^-=\alpha_k$ for $1\leq k\leq m$. In terms of
these notations, we rewrite
\begin{align*}
v^{f(\uni)} =&\prod_{1\leq k\leq m, \beta_k^+\geq 1}v^{\beta_k^+-1},\\
(-1)^{g(\uni)} =&\prod_{1\leq k\leq m, \beta_k^-\geq 1}(-1)^{\beta_k^--1},\\
(v-1)^{h(\uni)} =&(v-1)^{N(\beta^\pm)-1},
\end{align*}
where we have denoted
$$
N(\beta^\pm) =\sharp \{1 \le k \le m \mid \beta_k^+ >0\} +\sharp \{1
\le k \le m \mid \beta_k^- >0\}.
$$
Let $\Omega(\alpha)$ denote the set which consists of all the pairs
$(\beta^+,\beta^-)$ satisfying $\beta_k^++\beta_k^-=\alpha_k$, for
$1\leq k\leq m$, and let  $\ell(\alpha)$ denote the number of
nonzero parts in $\alpha$. Then
\begin{align*}
  {\rm tr} & (DT_{w_{(n)}})|_{x^{\alpha}}  \\
=&\sum_{\uni\in\Ga(\alpha)}v^{f(\uni)}(-1)^{g(\uni)}(v-1)^{h(\uni)}\\
=&\sum_{(\beta^+,\beta^-)\in\Omega(\alpha)}
 \prod_{1\leq k\leq m, \beta_k^+\geq 1}v^{\beta_k^+-1} \cdot
 \prod_{1\leq k\leq m, \beta_k^-\geq 1}(-1)^{\beta_k^--1} \cdot
(v-1)^{N(\beta^\pm)-1}\\
=&(v-1)^{-1}\sum_{(\beta^+,\beta^-)\in\Omega(\alpha)}\prod_{k=1}^m
v^{\beta_k^+-1+\delta_{\beta_k^+,0}}
\cdot(-1)^{\beta_k^- -1+\delta_{\beta_k^-,0}}
\cdot(v-1)^{2-\delta_{\beta_k^+,0}-\delta_{\beta_k^-,0}}\\
=&(v-1)^{\ell(\alpha)-1}\prod_{1\leq k\leq m,  \alpha_k> 0}
\Big(v^{\alpha_k-1}+(-1)^{\alpha_k-1}
+\sum_{j=1}^{\alpha_k-1}v^{j-1}(-1)^{\alpha_k-j-1}(v-1)\Big)\\
=&(v-1)^{\ell(\alpha)-1} \Delta_{\alpha}.
\end{align*}
Therefore,
\begin{align*}
{\rm tr}(DT_{w_{(n)}})
=&\sum_{\alpha=(\alpha_1,\ldots,\alpha_m)\in\Z_+^m,
\sum_k\alpha_k=n} (v-1)^{\ell(\alpha)-1} \Delta_{\alpha}
x_1^{\alpha_1}\cdots x_m^{\alpha_m}\\
=&\sum_{\mu\in\mc{P}_n}(v-1)^{\ell(\mu)-1}\Delta_{\mu}m_{\mu}(x_1,\ldots, x_m).
\end{align*}
The proposition is proved.
\end{proof}

For $n\geq 0$ and $x =(x_1,\ldots, x_m)$, we define $g_n(x;v)$ by
the following generating function in a variable $t$:
\begin{align}\label{eq:generating}
\sum_{n\geq 0}g_n(x;v)t^n=\prod_{i}
\frac{1 -tx_i}{1 +tx_i}\cdot \frac{1 +v tx_i}{1 -v tx_i},
\end{align}
and then set
\begin{equation}\label{eq:onecharacter}
\widetilde{g}_n(x;v)=\frac{1}{v-1}g_n(x;v).
\end{equation}

Proposition~\ref{prop:trace} can be reformulated as follows.

\begin{proposition}\label{prop:trace2}
For $n\geq 1$, we have
$
\text{tr}(DT_{w_{(n)}})=\widetilde{g}_n(x;v).
$
\end{proposition}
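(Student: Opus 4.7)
The plan is to reduce the identity to the following single-variable factorization: for each $i$,
\begin{equation*}
\frac{(1-tx_i)(1+vtx_i)}{(1+tx_i)(1-vtx_i)} \;=\; 1 + (v-1)\sum_{k\geq 1}\Delta_k(tx_i)^k,
\end{equation*}
and then expand the infinite product $\prod_i$ of the left-hand side and compare with Proposition~\ref{prop:trace}. To prove the single-variable identity, I would first rewrite the left-hand side minus $1$ with a common denominator, observing that
\begin{equation*}
(1+tx_i)(1-vtx_i) \;=\; 1-(v-1)tx_i - v(tx_i)^2
\end{equation*}
while $(1-tx_i)(1+vtx_i) - (1+tx_i)(1-vtx_i) = 2(v-1)tx_i$. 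Doing partial fractions with respect to the factorization $(1+u)(1-vu)$ in the denominator yields
\begin{equation*}
\frac{2(v-1)u}{(1+u)(1-vu)} \;=\; \frac{2(v-1)}{v+1}\Bigl(\frac{1}{1-vu}-\frac{1}{1+u}\Bigr) \;=\; \frac{2}{v+1}\sum_{k\geq 1}(v-1)\bigl(v^k-(-1)^k\bigr)u^k,
\end{equation*}
which by the definition of $\Delta_k$ is precisely $(v-1)\sum_{k\geq 1}\Delta_k u^k$.

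Next I would take the product over $i$ of $1 + (v-1)\sum_{k\geq 1}\Delta_k(tx_i)^k$ and collect the coefficient of $t^n$. Expanding, each term corresponds to choosing a finite subset $S$ of indices and assigning to each $i\in S$ a positive integer $k_i\geq 1$, producing the weight $(v-1)^{|S|}\prod_{i\in S}\Delta_{k_i}(tx_i)^{k_i}$. Grouping such configurations by the partition $\mu = (k_i)_{i\in S}$ of $n = \sum k_i$ (so that $|S|=\ell(\mu)$ and $\prod\Delta_{k_i}=\Delta_\mu$), and recognizing the sum over all such index-plus-assignment data with fixed multiset of exponents $\mu$ as exactly $m_\mu(x_1,\ldots,x_m)$, I obtain
\begin{equation*}
g_n(x;v) \;=\; \sum_{\mu\in\mc{P}_n}(v-1)^{\ell(\mu)}\Delta_\mu\, m_\mu(x_1,\ldots,x_m).
\end{equation*}
Dividing by $v-1$ gives $\widetilde g_n(x;v) = \sum_{\mu\vdash n}(v-1)^{\ell(\mu)-1}\Delta_\mu m_\mu(x_1,\ldots,x_m)$, which matches Proposition~\ref{prop:trace}.

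The potential snag is the combinatorial bookkeeping in the last step: one has to be careful with repeated parts in $\mu$ so that the count of compositions with prescribed multiset of parts combines with the choice of ordered distinct indices $i_1<\cdots<i_{\ell(\mu)}$ to reproduce $m_\mu$ exactly once (with no overcount from the symmetry of $\Delta_\mu$ under permutation of parts). Checking small examples such as $\mu=(1,1,1)$, $(2,1)$, and $(2,2)$ makes clear that this is automatic, since $m_\mu$ is defined as a sum over \emph{distinct} monomials and $\Delta_\mu$ depends only on the multiset of parts; no further combinatorial factor intrudes. Once this bookkeeping is in place, the identification with Proposition~\ref{prop:trace} is immediate.
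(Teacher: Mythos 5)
Your proposal is correct and is essentially the paper's own argument run in the opposite direction: the paper starts from Proposition~\ref{prop:trace}, sums over $n$ and $\mu$, and factors the resulting series into the product $\prod_i\frac{(1-tx_i)(1+vtx_i)}{(1+tx_i)(1-vtx_i)}$ via the same geometric-series/partial-fraction identity $1+(v-1)\sum_{k\ge1}\Delta_k u^k=\frac{(1-u)(1+vu)}{(1+u)(1-vu)}$, whereas you expand the product and match coefficients with Proposition~\ref{prop:trace}. The $m_\mu$ bookkeeping you worry about is the same (harmless) step the paper performs implicitly in its second equality, so nothing further is needed.
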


\begin{proof}
By Proposition~\ref{prop:trace}, we have
\begin{align*}
(v-1)^{-1} &+ \sum_{n\geq 1}\text{tr}(DT_{w_{(n)}})t^n
 \\
=&\sum_{n}t^n\sum_{\mu\in\mc{P}_n}(v-1)^{\ell(\mu)-1}\Delta_{\mu}m_{\mu}(x_1,\ldots, x_m)\\
=&\frac{1}{v-1}\prod_i \Big(1+\sum_{s\geq 1}(v-1)\Delta_sx_i^st^s\Big)\\
=&\frac{1}{v-1}\prod_i \left(1+\sum_{s\geq 1}(v-1)
\frac{2v^sx_i^st^s-2(-1)^sx_i^st^s}{v+1}\right)\\
=&\frac{1}{v-1}\prod_i \left(1+\frac{2(v-1)}{v+1}
\Big(\frac{vx_it}{1-vx_it}-\frac{-x_it}{1+x_it}\Big)\right)\\
=&\frac{1}{v-1}\prod_{i}\frac{1 -x_it}{1 +x_it}\cdot \frac{1 +v
x_it}{1 -v x_it}.
\end{align*}
This implies (and is indeed equivalent to) the proposition by using
\eqref{eq:generating} and \eqref{eq:onecharacter}.
\end{proof}

\begin{remark}
The symmetric function $g_n(x;v)$ also appears as  a special case
of the spin Hall-Littlewood functions (i.e., the one
associated to the one-row partition $(n)$) introduced in \cite{WW2}.
\end{remark}

\subsection{A Frobenius formula for characters of $\HC$}

\begin{lemma}\label{lem:producttrace}
Assume that
$\ga=(\ga_1,\ga_2,\ldots,\ga_{\ell})$ is a composition of $n$. Then
$$
{\rm tr}(DT_{w_{\ga}})
=\prod_{1\leq j \leq\ell}{\rm tr}(DT_{{\ga,j}}).
$$
\end{lemma}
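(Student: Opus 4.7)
The plan is to exploit the tensor product decomposition
$V^{\otimes n} = V^{\otimes \gamma_1} \otimes V^{\otimes \gamma_2} \otimes \cdots \otimes V^{\otimes \gamma_\ell}$
induced by the composition $\gamma = (\gamma_1, \ldots, \gamma_\ell)$, and show that under this decomposition both $D$ and $\Psi_n(T_{w_\gamma})$ factor as tensor products of block-wise operators. Once this is established, the lemma is immediate from the multiplicativity of the ordinary trace on a tensor product.

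First I would verify that the operator $D = x_1^{H_1} \cdots x_m^{H_m}$ acts on $V^{\otimes n}$ as $D^{(1)} \otimes D^{(2)} \otimes \cdots \otimes D^{(\ell)}$, where $D^{(j)}$ denotes the analogous operator acting on the $j$-th block $V^{\otimes \gamma_j}$. This holds because each $H_i \in \q(m)$ extends to $V^{\otimes n}$ additively via the coproduct, so $x_i^{H_i}$ is multiplicative across tensor factors.

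Next, since $T_{\gamma,j} = T_{\gamma_1 + \cdots + \gamma_{j-1}+1} \cdots T_{\gamma_1 + \cdots + \gamma_j - 1}$ is a product of generators $T_k$ whose images $\Psi_n(T_k) = \bar{S}^{k,k+1}$ act nontrivially only on positions $k, k+1$ (which lie inside the $j$-th block), the operator $\Psi_n(T_{\gamma,j})$ acts as the identity on every block other than the $j$-th. For $j \ne k$, the operators $\Psi_n(T_{\gamma,j})$ and $\Psi_n(T_{\gamma,k})$ therefore act on disjoint tensor factors and are both even, so they commute without any super sign. Applying $\Psi_n$ to the definition $T_{w_\gamma} = T_{\gamma,1} T_{\gamma,2} \cdots T_{\gamma,\ell}$ then yields $\Psi_n(T_{w_\gamma}) = \bigotimes_{j=1}^{\ell} \Psi_{\gamma_j}(T_{w_{(\gamma_j)}})$ under the block decomposition, using the trivial index-shift identification of $T_{\gamma,j}$ with $T_{w_{(\gamma_j)}}$ on the $j$-th block.

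Combining the two factorizations gives $D\, \Psi_n(T_{w_\gamma}) = \bigotimes_{j=1}^{\ell}\bigl(D^{(j)} \Psi_{\gamma_j}(T_{w_{(\gamma_j)}})\bigr)$ as an operator on $V^{\otimes n}$. Since every factor is even, the ordinary trace satisfies ${\rm tr}(A \otimes B) = {\rm tr}(A)\,{\rm tr}(B)$ without sign modification, whence
\[
{\rm tr}(D T_{w_\gamma}) = \prod_{j=1}^{\ell} {\rm tr}_{V^{\otimes \gamma_j}}\bigl(D^{(j)} \Psi_{\gamma_j}(T_{w_{(\gamma_j)}})\bigr) = \prod_{j=1}^{\ell} {\rm tr}(D T_{\gamma,j}),
\]
where the last equality reflects the interpretation of ${\rm tr}(D T_{\gamma,j})$ as the trace on the $j$-th block. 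The only mild subtlety is bookkeeping of super signs in tensor products of operators; but the fact that $D$ and every $\Psi_n(T_{\gamma,j})$ is purely even eliminates this concern and reduces the argument to a direct verification.
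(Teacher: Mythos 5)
Your proposal is correct and follows essentially the same route as the paper: the paper likewise observes that the $T_{\ga,j}$ commute and, by Proposition~\ref{prop:Ol}, each $\Psi_n(T_{\ga,j})$ acts only on its own block of tensor factors, so the trace factors block by block. You merely make explicit the details the paper leaves implicit (the tensor factorization of $D$, the evenness of all operators removing super signs, and the reading of ${\rm tr}(DT_{\ga,j})$ as the trace over the $j$-th block $V^{\otimes\ga_j}$, which is indeed how the lemma is used in the proof of Theorem~\ref{thm:Frobenius}).
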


\begin{proof}
Observe that
$$
T_{w_{\ga}}=\prod_{1\leq j\leq\ell}T_{{\ga,j}},
$$
and $T_{{\ga,j}}$'s commute with each other. By
Proposition~\ref{prop:Ol}, we note that $T_{\ga,1}$ acts only on the
first $\ga_1$ factors, $T_{\ga,2}$ acts only on the subsequent
$\ga_2$ factors  and so on. The lemma follows.
\end{proof}

For a partition $\mu=(\mu_1,\ldots,\mu_{\ell})$, we define
\begin{align*}
\widetilde{g}_{\mu}(x;v)=\widetilde{g}_{\mu_1}(x;v)\widetilde{g}_{\mu_2}(x;v)\cdots
\widetilde{g}_{\mu_{\ell}}(x;v).
\end{align*}
We are ready to establish a Frobenius type formula for the
characters $\zeta^\la$ of the Hecke-Clifford algebra $\HCK$.

\begin{theorem}\label{thm:Frobenius}
The following holds for each partition $\mu$ of $n$:
\begin{equation}\label{eq:Frobenius}
\widetilde{g}_{\mu}(x;v)
=\sum_{\la\in\mc{SP}_n}2^{-\frac{\ell(\la)+\delta(\la)}{2}}
Q_{\la}(x)\zeta^{\la}(T_{w_{\mu}}).
\end{equation}
\end{theorem}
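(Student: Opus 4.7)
The plan is to compute the trace ${\rm tr}(DT_{w_\mu})$ on $V^{\otimes n}$ in two different ways and equate the results. The first computation is combinatorial/geometric, exploiting the factorization of both $D$ and $T_{w_\mu}$ over the tensor factors; the second is representation-theoretic, using the Sergeev--Olshanski duality packaged in Proposition~\ref{prop:generaltrace}.

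For the geometric side, I will observe that $T_{w_\mu} = \prod_j T_{\mu,j}$ is a product of commuting operators, with $T_{\mu,j}$ acting only on the $j$-th block of $\mu_j$ consecutive tensor factors, while $D$ factors as $\bigotimes_k D^{(k)}$. By Lemma~\ref{lem:producttrace}, this gives
\begin{equation*}
{\rm tr}(DT_{w_\mu}) = \prod_{j=1}^{\ell(\mu)} {\rm tr}(DT_{\mu,j}),
\end{equation*}
where each factor is a trace on the relevant block $V^{\otimes \mu_j}$. Since $T_{\mu,j}$ acts on its block exactly as $T_{w_{(\mu_j)}}$ acts on $V^{\otimes \mu_j}$, Proposition~\ref{prop:trace2} applied block-by-block yields ${\rm tr}(DT_{\mu,j}) = \widetilde{g}_{\mu_j}(x;v)$, and hence ${\rm tr}(DT_{w_\mu}) = \widetilde{g}_\mu(x;v)$.

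For the representation-theoretic side, Proposition~\ref{prop:generaltrace} applied to $h = T_{w_\mu}$ gives
\begin{equation*}
{\rm tr}(DT_{w_\mu}) = \sum_{\la\in\mc{SP}_n,\,\ell(\la)\leq m} 2^{-\frac{\ell(\la)+\delta(\la)}{2}} Q_\la(x_1,\ldots,x_m)\, \zeta^\la(T_{w_\mu}).
\end{equation*}
Equating this with $\widetilde{g}_\mu(x_1,\ldots,x_m;v)$ produces the identity in $m$ variables. To pass to \eqref{eq:Frobenius} in the ring of symmetric functions, I will choose $m \geq n$, so that the length restriction $\ell(\la) \leq m$ is vacuous for every $\la \in \mc{SP}_n$. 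Both sides are then homogeneous symmetric functions of degree $n$ in at least $n$ variables, which suffices to determine them as elements of $\Lambda$. No serious obstacle is expected: the heavy computational work has already been accomplished in Proposition~\ref{prop:trace} (and therefore Proposition~\ref{prop:trace2}), and the duality is already in place to supply the representation-theoretic interpretation of the same trace.
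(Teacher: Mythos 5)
Your proposal is correct and is essentially the paper's own argument: the paper likewise combines Lemma~\ref{lem:producttrace} with Proposition~\ref{prop:trace2} to identify ${\rm tr}(DT_{w_\mu})$ with $\widetilde{g}_\mu(x;v)$, and then invokes Proposition~\ref{prop:generaltrace} to equate it with the character sum. Your explicit remark about taking $m\geq n$ so the constraint $\ell(\la)\leq m$ is vacuous is a small point the paper leaves implicit, but it changes nothing in the structure of the proof.
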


\begin{proof}
By Lemma~\ref{lem:producttrace} and Proposition~\ref{prop:trace2}, one deduces that
\begin{align*}
{\rm tr}(DT_{w_{\mu}})
=&{\rm tr}(DT_{\mu,1})\cdots{\rm tr}(DT_{\mu,\ell})\\
=&\widetilde{g}_{\mu_1}(x;v) \cdots \widetilde{g}_{\mu_{\ell}}(x;v)
=\widetilde{g}_{\mu}(x;v).
\end{align*}
This together with Proposition~\ref{prop:generaltrace} implies the theorem.
\end{proof}

\begin{remark}
Recall the specialization at $q=1$ of the Frobenius character
formula for type A Hecke algebra established in \cite{Ram} recovers
the original formula of Frobenius \cite{Fr} for the irreducible
characters of symmetric groups.
For $r \ge 1$, we have
\begin{eqnarray*}
\widetilde{g}_r(x;v)|_{v=1} =
\left\{
 \begin{array}{ll}
 2p_r(x), & \text{ for } r \text{ odd} \\
 0,  & \text{ for } r \text{ even},
 \end{array}
 \right.
\end{eqnarray*}
where $p_r$ denotes the $r$th power sum symmetric function. Hence,
the Frobenius formula in Theorem~\ref{thm:Frobenius} specializes
when $v=1$ to a character formula for $\mf H^c_n$, which is
essentially equivalent to Schur's original character formula for the
spin symmetric groups \cite{Sch} (cf.~ \cite{Jo} and \cite{WW3}).
\end{remark}
\section{Trace functions on Hecke-Clifford algebra}
\label{sec:trace}

In this section, we will exhibit an explicit basis for the space of
trace functions on the Hecke-Clifford algebra.

\subsection{The trace functions}

Let $R$ be a commutative ring in which $2$ is invertible. For an
$R$-superalgebra $\mc{H}$ which is a free $R$-module, a {\em trace
function} on $\mc{H}$ is an $R$-linear map $\phi: \mc{H}\rightarrow
R$ satisfying
$$
\phi(hh')=\phi(h'h) \text{ for } h, h'\in\mc{H} ,\qquad  \phi(h)=0
\text{ for }h\in\od{\mc{H}}.
$$
For $h, h'\in\mc H$, define their commutator by $[h,h']=hh'-h'h$,
and let $[\mc H,\mc H]$ be the $R$-submodule of $\mc H$ spanned by
all commutators (not super-commutators!). Observe that a linear map
$\phi:\mc H\rightarrow R$ with $\phi(\od{\mc H})=0$ is a trace
function if and only if $\ev{[\mc H,\mc H]}\subseteq {\rm Ker}\phi$.
Thus, the space of trace functions on $\mc H$ is canonically
isomorphic to the dual space $\Hom_{R}(\ev{(\mc H/[\mc H,\mc H])},
R)$ of $\ev{(\mc H/[\mc H,\mc H])}=\ev{\mc H}/\ev{[\mc H,\mc H]}$.

Let ${\bf A}:=\Z[\frac{1}{2}][v,v^{-1}]\subseteq\C(v)$, and denote
by $\HCa$ the $\bf A$-subalgebra of $\HC$ generated by $T_1,\ldots,
T_{n-1}$ and $c_1,\ldots c_n$. Note that $\HCa$ is $\bf A$-free and
$\HC=\C(v^{\frac12})\otimes_{\bf A}\HCa$.

The main result of this subsection is Theorem~\ref{thm:spacetrace},
the proof of which requires a sequence of lemmas. Recall $T_{w_\ga}$
from \eqref{eq:Twga}.

\begin{lemma}\label{lem:Ram5.1}
For each $I\subseteq [n]$ and $\sigma\in S_n$, there exists an $\bf
A$-linear combination of the form
$$
\Ga_{I,\sigma}=\sum_{J\subseteq[n],
\ga\in\mc{CP}_n}a_{(I,\sigma),(J,\ga)}T_{w_{\ga}}C_J,
$$
where $\ell(w_\ga) \leq \ell(\sigma)$ and
$a_{(I,\sigma),(J,\ga)}\in\bf A$, such that
$$
T_{\sigma}C_I\equiv \Ga_{I,\sigma}\mod [\HCa, \HCa].
$$
\end{lemma}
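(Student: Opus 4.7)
The plan is to induct on $\ell(\sigma)$. The base case $\ell(\sigma)=0$ is immediate: taking $\ga=(1^n)$ gives $w_\ga=1$, and $\Ga_{I,1}=C_I$ works. For the inductive step, pick a left descent $s_i$ of $\sigma$ and write $\sigma=s_i\tau$ with $\ell(\tau)=\ell(\sigma)-1$. The basic cyclicity $ab\equiv ba\pmod{[\HCa,\HCa]}$ applied to $a=T_i$, $b=T_\tau C_I$ yields
\[
T_\sigma C_I \;\equiv\; T_\tau\, C_I\, T_i \pmod{[\HCa,\HCa]}.
\]

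The key step is to move $T_i$ back across $C_I$. From $T_ic_i=c_{i+1}T_i$ together with the quadratic relation $T_i^2=(v-1)T_i+v$ one derives the identity
\[
c_i\, T_i \;=\; T_i\, c_{i+1} + (v-1)(c_i-c_{i+1}),
\]
alongside the immediate $c_{i+1}T_i=T_ic_i$ and $c_jT_i=T_ic_j$ for $j\neq i,i+1$. An induction on $|I|$ then produces
\[
C_I\, T_i \;=\; T_i\, C_{s_i(I)} + \sum_{J\subseteq[n]} b_J\, C_J,
\]
where $s_i(I)$ denotes the image of $I$ under the transposition $i\leftrightarrow i+1$, the $b_J\in\mathbf{A}$, and the tail is purely Clifford (no $T$-factors). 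Substituting back,
\[
T_\sigma C_I \;\equiv\; T_\tau T_i\, C_{s_i(I)} + \sum_J b_J\, T_\tau C_J \pmod{[\HCa,\HCa]}.
\]

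Every term $T_\tau C_J$ has Hecke length $\ell(\tau)<\ell(\sigma)$ and is dispatched by the inductive hypothesis. The leading term $T_\tau T_i\, C_{s_i(I)}$ splits into two cases. If $\ell(\tau s_i)=\ell(\tau)-1$, then $T_\tau T_i=(v-1)T_\tau+vT_{\tau s_i}$, whose terms again have Hecke length $<\ell(\sigma)$, so induction applies. Otherwise $\ell(\tau s_i)=\ell(\sigma)$, and the leading term equals $T_{s_i\sigma s_i}\, C_{s_i(I)}$, of the same Hecke length but with $\sigma$ replaced by its simple conjugate; the whole argument is then iterated on the new pair $(s_i\sigma s_i,\,s_i(I))$.

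The hardest point is termination of this length-preserving iteration. It is controlled by the Geck--Pfeiffer cyclic shift theorem for the symmetric group, which ensures that any sequence of length-preserving simple conjugations $\sigma\mapsto s_i\sigma s_i$ terminates after finitely many steps either at an element of strictly smaller length (handled by induction) or at a minimum-length representative of the conjugacy class of $\sigma$; for $S_n$ the latter may be chosen as $w_\mu$ with $\mu$ the cycle type of $\sigma$, at which point $T_{w_\mu}C_{I''}$ is already of the required form. The bookkeeping for the Clifford part causes no extra difficulty, since $I$ is merely permuted at each shift and all pure-Clifford tails are absorbed by the induction on Hecke length.
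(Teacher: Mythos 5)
Your reduction engine is sound and is essentially the same as the paper's: the identity $c_iT_i=T_ic_{i+1}+(v-1)(c_i-c_{i+1})$, the expansion $C_IT_i=T_iC_{s_i(I)}+(\text{pure Clifford tail})$, the two cases according to whether $\ell(\tau s_i)$ drops or rises, and the disposal of all strictly shorter terms by induction on $\ell(\sigma)$ are all correct. The genuine gap is exactly the point you flag as hardest: termination of the length-preserving iteration $(\sigma,I)\mapsto(s_i\sigma s_i,\,s_i(I))$. The Geck--Pfeiffer cyclic shift theorem is an \emph{existence} statement: for each $\sigma$ there exists \emph{some} chain of elementary conjugations, never increasing length, ending at \emph{some} minimal-length element of the class. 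It does not say that an arbitrary chain of length-preserving conjugations by left descents makes progress, and with an arbitrary choice of descent your iteration can cycle forever: in $S_4$ one has the loop $s_3s_2s_1\to s_2s_1s_3\to s_1s_3s_2\to s_3s_2s_1\to\cdots$, where each step conjugates by a legitimate left descent, the length stays equal to $3$ throughout, and none of these elements is of the form $w_\ga$ for a composition $\ga$. So ``the whole argument is then iterated'' is not yet a proof: you need a rule for choosing the descent together with a well-founded measure that it decreases. Moreover, even granting a correct invocation of the theorem, its endpoint is a minimal-length representative $w_C$, which need not be any $w_\ga$ (e.g.\ $s_2s_1$ in $S_3$, or the three elements of the loop above), and you cannot then pass from $T_{w_C}C_{I''}$ to the required shape by multiplying the Hecke-algebra congruence $T_{w_C}\equiv T_{w_\mu}$ on the right by $C_{I''}$, because $[\HCa,\HCa]$ is not an ideal; handling minimal-length representatives other than $w_\nu$ in the presence of the Clifford factor is a separate argument (the paper only does this for pure $T_w$'s, in Lemma~\ref{lem:GP8.2} and the remark on the character table).

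The paper (following Ram) closes precisely this hole by a canonical choice of the conjugating reflection: take $i$ minimal with $\sigma(i)>i+1$ (if there is none, $\sigma$ is already some $w_\ga$) and conjugate by $s_j$ with $j=\sigma(i)-1$, which is automatically a left descent. Both permutations produced, $s_j\sigma$ and $s_j\sigma s_j$, agree with $\sigma$ at positions $1,\dots,i-1$ and take the value $\sigma(i)-1$ at position $i$, and lengths never increase; a double induction on $\sigma(i)$ and (reverse) on $i$ therefore terminates, and it terminates exactly at the elements $w_\ga$, with the bound $\ell(w_\ga)\le\ell(\sigma)$ built in. No appeal to minimal-length conjugacy-class theory is needed. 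To salvage your arbitrary-descent framework you would instead need the stronger type-$A$ facts (that the chain can be steered, by length-non-increasing elementary conjugations realizable in your left-descent mechanism, all the way to $w_\mu$ itself), which is more than the theorem you quoted and must still be interfaced with the Clifford factor as above.
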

\begin{proof}
Let $i$ be the smallest integer such that $\sigma(i)>i+1$.
We shall use a double induction involving
an induction on $\sigma(i)$ and a reverse induction on $i$.
Observe that if there does not exist such $i$, this can be regarded as the case
$i=n$ and
$\sigma$ must be equal to $w_\ga$ defined in \eqref{eq:wga}
for some composition $\ga=(\ga_1,\ga_2,\ldots,\ga_{\ell})$.

Let $j=\sigma(i)-1$. Since $\sigma(\sigma^{-1}(j))=j=\sigma(i)-1>i$,
the choice of $i$ implies that $\sigma^{-1}(j)>i$. This together
with $\sigma^{-1}(j+1)=i$ implies that
$\sigma^{-1}(j)>\sigma^{-1}(j+1)$ and hence $\ell(\sigma^{-1}
s_j)<\ell(\sigma^{-1})$, or equivalently,
$\ell(s_j\sigma)<\ell(\sigma)$. Then
$$
T_{\sigma}=T_{s_j}T_{s_j\sigma}.
$$
The following holds by the defining relation of $\HC$:
\begin{equation}\label{eq:CITj}
C_IT_{s_j}=T_{s_j}C_{I'}+\sum_{J\subseteq[n]}a_JC_J
\end{equation}
with $a_J\in\bf A$ and $I'\subseteq[n]$. We now consider two
cases separately.

(i) First assume that $\ell(s_j\sigma s_j)>\ell( s_j\sigma)$.
Then $T_{s_j\sigma s_j}=T_{ s_j\sigma}T_{s_j}$, and hence
\begin{align*}
T_{\sigma}C_I
&=T_{s_j}T_{ s_j\sigma}C_I\\
&\equiv T_{ s_j\sigma}C_IT_{s_j} \mod [\HCa, \HCa]
 \\
&=T_{ s_j\sigma}T_{s_j}C_{I'}+\sum_{J}a_JT_{ s_j\sigma}C_J
 \quad \text{ by } \eqref{eq:CITj} \\
&= T_{s_j\sigma s_j}C_{I'}+\sum_{J}a_J T_{ s_j\sigma}C_J.
\end{align*}

(ii) Assume that $\ell(s_j\sigma s_j)<\ell( s_j\sigma)$.  In this
case, we have $T_{ s_j\sigma}=T_{s_j\sigma s_j}T_{s_j}$, and thus
$T_{\sigma}=T_{s_j}T_{s_j\sigma }=T_{s_j}T_{s_j\sigma s_j}T_{s_j}$.
Hence,
\begin{align*}
T_{\sigma}C_I
&=T_{s_j}T_{s_j\sigma s_j}T_{s_j}C_I\\
&\equiv T_{s_j\sigma s_j}T_{s_j}C_IT_{s_j} \mod [\HCa, \HCa].
\end{align*}
Again by \eqref{eq:CITj} we compute that
\begin{align*}
T_{s_j\sigma s_j}T_{s_j}C_IT_{s_j}
&=T_{s_j\sigma s_j}T_{s_j}T_{s_j}C_{I'}
+\sum_{J}a_JT_{s_j\sigma s_j}T_{s_j}C_J  \\
&= T_{s_j\sigma s_j}T^2_{s_j}C_{I'}
+\sum_{J}a_JT_{s_j\sigma s_j}T_{s_j}C_J\\
&= (v-1)T_{s_j\sigma s_j}T_{s_j}C_{I'}
+vT_{s_j\sigma s_j}C_{I'}+\sum_{J\subseteq[n]}a_JT_{s_j\sigma s_j}T_{s_j}C_J\\
&= (v-1)T_{ s_j\sigma}C_{I'}+vT_{s_j
\sigma s_j}C_{I'}+\sum_{J\subseteq[n]}a_JT_{ s_j\sigma}C_J.
\end{align*}
Thus, if $\ell(s_j\sigma s_j)<\ell( s_j\sigma)$, we have
$$
T_{\sigma}C_I\equiv  (v-1)T_{ s_j\sigma}C_{I'}+vT_{s_j
\sigma s_j}C_{I'}+\sum_{J\subseteq[n]}a_JT_{ s_j\sigma}C_J\mod [\HCa, \HCa].
$$

Observe that in each case, the resulted permutations
$\sigma':= s_j\sigma$ and $\sigma'':=s_j\sigma s_j$ satisfy
$\sigma'(k)=\sigma(k)=\sigma''(k)$ for $1\leq k\leq i-1<j-1$,
since  $\sigma(k)\leq k+1< j$ due to the choice of $i$.
In addition,
\begin{align*}
\sigma'(i)&=s_j(\sigma(i))=s_j(j+1)=j=\sigma(i)-1,\\
\sigma''(i)&=s_j(\sigma s_j(i))=s_j(\sigma(i))=s_j(j+1)=j=\sigma(i)-1.
\end{align*}
Note that $\ell(\sigma')=\ell(\sigma)-1$ in both cases. Moreover,
$\ell(\sigma'')=\ell(\sigma')+1=\ell(\sigma)$ in case (i), while
$\ell(\sigma'')<\ell(\sigma')<\ell(\sigma)$ in case (ii).
This completes the induction step, and hence the lemma is proved.
\end{proof}

Recall $\xn =T_1T_2\ldots T_{n-1}$. Denote by
$$
T_i' :=T_i -v+1 =vT_i^{-1}.
$$














\begin{lemma}  \label{lem:coxHn}
The following identities hold
in $\HCa$:
\begin{align*}
 \xn c_i &=c_{i+1} \xn \quad  \text{  for } 1\le i \le n-1.
  \\
\xn  c_n &=c_1 T_1'T_2'\ldots T_{n-1}'
  +(v-1)  \left( c_2  T_{w_{(1)}} T_2'\ldots T_{n-1}'
+c_3T_{w_{(2)}} T_3'\ldots T_{n-1}' \right.
  \\
  & \left.
\quad  \qquad \quad\qquad \quad\qquad \quad\qquad \quad+\ldots +
c_{n-1} T_{w_{(n-2)}} T_{n-1}' +c_nT_{w_{(n-1)}}  \right).
\end{align*}
\end{lemma}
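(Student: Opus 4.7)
The plan is to prove both identities by direct manipulation using the defining relations of $\HCa$, aided by the quadratic relation $T_j^2 = (v-1) T_j + v$ (equivalent to $T_j' = v T_j^{-1}$). The key preliminary step is to derive the companion formula
\[
T_j c_{j+1} = (v-1) c_{j+1} + c_j T_j', \qquad 1 \le j \le n-1.
\]
This follows by substituting $c_{j+1} = T_j c_j T_j^{-1}$ into $T_j c_{j+1}$ and using $T_j^2 c_j = (v-1) T_j c_j + v c_j = (v-1) c_{j+1} T_j + v c_j$, so that $T_j c_{j+1} = (v-1) c_{j+1} + v c_j T_j^{-1}$.

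For the first identity $\xn c_i = c_{i+1} \xn$, I would write $\xn = T_1 T_2 \cdots T_{n-1}$ and slide $c_i$ leftward through this product. By the relation $T_j c_i = c_i T_j$ for $j \notin \{i-1, i\}$, the symbol $c_i$ commutes freely past $T_{n-1}, T_{n-2}, \ldots, T_{i+1}$. It then meets $T_i$ and becomes $c_{i+1}$ via $T_i c_i = c_{i+1} T_i$. Now $c_{i+1}$ commutes past the remaining factors $T_{i-1}, \ldots, T_1$ (since their indices avoid $i, i+1$), producing $c_{i+1} \xn$.

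For the second identity, I would argue by iterating the preliminary formula. Applying it to $T_{n-1} c_n$ gives
\[
\xn c_n = (v-1)\, T_1\cdots T_{n-2}\, c_n + T_1 \cdots T_{n-2}\, c_{n-1} T_{n-1}'.
\]
Since $c_n$ commutes with $T_1, \ldots, T_{n-2}$, the first term collapses to $(v-1) c_n T_{w_{(n-1)}}$. For the second term, I would apply the preliminary formula again to $T_{n-2} c_{n-1}$, split off the resulting multiple of $c_{n-1}$ (and then commute it past $T_1, \ldots, T_{n-3}$, which have indices avoiding $n-2, n-1$), and recurse. After $n-1$ such peeling steps — equivalently, a finite induction on the index $j$ of the relation $T_j c_{j+1} = (v-1)c_{j+1} + c_j T_j'$ being invoked — the process terminates when it reaches $T_1 c_2 = (v-1) c_2 + c_1 T_1'$, whose $c_1$-summand has nothing further to commute with and yields the leading term $c_1 T_1' T_2' \cdots T_{n-1}'$.

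The work is really just careful bookkeeping; there is no conceptual obstacle. The only delicate points are (i) confirming that at each recursion step the newly produced generator $c_{k}$ indeed commutes past every $T_j$ that sits to its left (which holds because at that stage $j \leq k-2$), and (ii) matching the residual Hecke-algebra words against the claimed $T_{w_{(k)}} T_{k+1}' \cdots T_{n-1}'$ pattern, which follows from the definition $T_{w_{(k)}} = T_1 T_2 \cdots T_{k-1}$.
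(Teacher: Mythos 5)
Your proposal is correct and follows essentially the same route as the paper: the key relation $T_jc_{j+1}=(v-1)c_{j+1}+c_jT_j'$ that you derive is exactly the identity $T_{n-1}c_n=c_{n-1}T_{n-1}-(v-1)(c_{n-1}-c_n)$ the paper starts from, and your peeling recursion is just the paper's induction on $n$ unrolled explicitly. No gaps; the commutation bookkeeping you flag is exactly what makes the induction step work.
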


\begin{proof}
The first identity follows directly from the defining relations in $\HCa$.
Since $T_{n-1}c_n=c_{n-1}T_{n-1}-(v-1)(c_{n-1}-c_n)$, we obtain that
$$
\xn  c_n = T_{w_{(n-1)}} c_{n-1}T_{n-1}' + (v-1) c_nT_{w_{(n-1)}}.
$$
Now the second identity follows from this identity by induction on $n$.
\end{proof}
\begin{lemma}\label{lem:xnCI}
For $I \subseteq[n]$ with $|I|$ even, the following holds:
$$
\xn C_I\equiv\pm \xn \mod \ev{ [\HCa, \HCa]}.
$$
(The precise signs here and in the subsequent similar expressions
shall not be needed.)
\end{lemma}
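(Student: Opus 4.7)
The plan is to proceed by induction on $|I|$, with base case $|I|=0$ trivial. Two preliminaries set the stage. First, since $|I|$ is even, $\xn C_I$ is even and so is $C_I\xn$; hence the ordinary commutator $[\xn, C_I]$ lies in $\ev{[\HCa,\HCa]}$, giving the cyclic invariance $\xn C_I \equiv C_I\xn$. Second, rewriting the first identity of Lemma~\ref{lem:coxHn} yields $c_j\xn = \xn c_{j-1}$ for $2 \le j \le n$, and iterating $\xn c_i = c_{i+1}\xn$ gives $\xn c_{i_1}\cdots c_{i_k} = c_{i_1+1}\cdots c_{i_k+1}\xn$ whenever all $i_\ell \le n-1$. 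Combining these, when $I \subseteq [n-1]$ we obtain $\xn C_I \equiv \xn C_{I+1}$ (coordinatewise shift), so by iterating this shift we may assume $n \in I$.

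With $n \in I$, write $I = \{i_1 < \cdots < i_{2m-1} < n\}$. Moving the pair $c_{i_{2m-1}}c_n$ to the front of $C_I$ introduces sign $(-1)^{2(2m-2)} = 1$, so $C_I = c_{i_{2m-1}}c_n C_J$ where $J = \{i_1,\ldots,i_{2m-2}\} \subseteq [i_{2m-1}-1]$ and $|J|$ is even. The main claim will be
$$
\xn c_{i_{2m-1}} c_n C_J \equiv (-1)^{n-1-i_{2m-1}}\,\xn C_J \pmod{\ev{[\HCa,\HCa]}},
$$
which reduces $|I|$ by $2$ and closes the induction via the hypothesis applied to $\xn C_J$.

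The key tool is that when $|J|$ is even and $n \notin J$, both $c_k$ and $\xn c_n C_J$ are odd, so their commutator lies in $\ev{[\HCa,\HCa]}$. Expanding $[c_n, \xn c_n C_J]$ using $c_n\xn = \xn c_{n-1}$, $C_J c_n = c_n C_J$ (as $|J|$ is even and $n \notin J$), and $c_n^2 = 1$ produces $\xn c_{n-1}c_n C_J \equiv \xn C_J$. For each $k$ with $i_{2m-1}+1 \le k \le n-1$ (all outside $J$ by the ordering of $I$), expanding $[c_k, \xn c_n C_J]$ via $c_k\xn = \xn c_{k-1}$, $C_J c_k = c_k C_J$, and $c_n c_k = -c_k c_n$ produces $\xn c_{k-1}c_n C_J \equiv -\xn c_k c_n C_J$. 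Cascading these sign-flips from $k=n-1$ down to $k=i_{2m-1}$ and then applying the $c_n$-commutator identity yields the displayed claim.

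The main obstacle is ensuring the cascade of commutator relations never stumbles over an index belonging to $J$; this is guaranteed automatically by $J \subseteq [i_{2m-1}-1]$, which means $J$ is disjoint from $\{i_{2m-1},\ldots,n-1\}$. Every sign produced along the way (from anticommutations, the cascade, and the $\pm$ in the inductive hypothesis) is $\pm 1$, so the induction delivers $\xn C_I \equiv \pm\xn$ as desired.
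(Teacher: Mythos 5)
Your argument is correct, and it rests on the same two ingredients as the paper's proof --- the intertwining identity $\xn c_i = c_{i+1}\xn$ from Lemma~\ref{lem:coxHn} and the observation that a commutator of two homogeneous elements of equal parity is an even commutator, so factors may be cycled modulo $\ev{[\HCa,\HCa]}$ --- but the reduction is organized differently. The paper works from the bottom of $I$: it pushes the smallest generator $c_{i_1}$ through $\xn$, cycles it to the end and anticommutes it back into place, so that either the minimal index increases or, when $i_1+1=i_2$, the two smallest generators cancel; a two-parameter reduction (on $|I|$ and on the minimal index) then terminates. You instead normalize first, shifting the whole index set via $\xn C_I \equiv \xn C_{I+1}$ until $n\in I$, and then cancel at the top: the cascade of congruences obtained from $[c_k,\xn c_n C_J]$ walks the second-largest index up to $n-1$, and the conjugation-by-$c_n$ identity $\xn c_{n-1}c_n C_J\equiv \xn C_J$ removes the pair, so that $|I|$ drops by two in each step of a plain induction on $|I|$. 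All the auxiliary facts you invoke (evenness of the relevant commutators, $k\notin J$ along the cascade since $J\subseteq[i_{2m-1}-1]$, and $c_k\xn=\xn c_{k-1}$ for $2\le k\le n$) do hold, so there is no gap. Your route costs the preliminary shift and the explicit sign $(-1)^{n-1-i_{2m-1}}$ (which the lemma does not need), but each step is a clean congruence against a single Clifford generator and the termination argument is simpler than the paper's double reduction.
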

\begin{proof}
Set $I=\{i_1,\ldots,i_k\}$.
Since $k=|I|$ is even, we have $i_1\leq n-1$ and hence by Lemma \ref{lem:coxHn}
$$
\xn C_I=\xn c_{i_1}\cdots c_{i_k}=c_{i_1+1}\xn c_{i_2}\cdots c_{i_k}.
$$
Therefore,
\begin{align*}
\xn C_I
&\equiv \xn c_{i_2}\cdots c_{i_k}c_{i_1+1} \mod \ev{ [\HCa, \HCa]}\\
& = \left\{
\begin{array}{cc}
(-1)^{k-1}\xn c_{i_1+1}c_{i_2}\cdots c_{i_k} ,&\text{ if }i_1+1<i_2,\\
(-1)^{k-2}\xn c_{i_3}\cdots c_{i_k} ,&\text{ if }i_1+1=i_2.
\end{array}
\right.
\end{align*}
In this way, we reduce $\xn C_I$ to a similar expression with smaller $|I|$
or with an increased $i_1$.
By repeating the above procedure, the lemma is proved.
\end{proof}

\begin{lemma}\label{lem:length2}
Let $\ga=(\ga_1,\ga_2)$ be a composition of $n$ with
$\ga_1,\ga_2>0$. Suppose
$I_1=\{i_1,\ldots,i_a\} \subseteq\{1,\ldots,\ga_1\},
I_2=\{j_1,\ldots, j_b\} \subseteq\{\ga_1+1,\ldots,\ga_1+\ga_2\}$
such that $a+b$ is even.
Then
$$
T_{w_{\ga}}C_{I_1}C_{I_2}
\equiv
\left\{
\begin{array}{ll}
0\qquad \mod \ev{ [\HCa, \HCa]},&\text{ if } a \text{ and } b \text{ are odd},\\
\pm T_{w_{\ga}} \mod \ev{ [\HCa, \HCa]}, &\text{ if } a \text{ and }
b \text{ are even}.
\end{array}
\right.
$$
\end{lemma}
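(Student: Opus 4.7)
The plan is to treat separately the two sub-cases where $a, b$ are both odd and where they are both even. Both arguments rest on the simple interaction between the sub-algebra $\mc A$ of $\HCa$ generated by $T_1, \ldots, T_{\gamma_1-1}, c_1, \ldots, c_{\gamma_1}$ and the sub-algebra $\mc B$ generated by $T_{\gamma_1+1}, \ldots, T_{n-1}, c_{\gamma_1+1}, \ldots, c_n$. The defining relations of $\HCa$ force every $T$-generator of one to commute with every generator of the other, while Clifford generators $c_i \in \mc A$ and $c_j \in \mc B$ anti-commute (since $i \neq j$). In particular, because $b$ is even, $C_{I_2}$ (and hence $T_{\gamma,2} C_{I_2}$) commutes in the ordinary sense with every element of $\mc A$, and similarly with the roles reversed.

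For the case where $a, b$ are both odd, I would use the cyclic equivalence $xy \equiv yx \pmod{[\HCa, \HCa]}$ together with the anti-commutation $C_{I_1} C_{I_2} = -C_{I_2} C_{I_1}$ (which holds when $ab$ is odd). Concretely,
\begin{align*}
T_{w_\gamma} C_{I_1} C_{I_2}
 &\equiv C_{I_2} T_{w_\gamma} C_{I_1}
 = T_{\gamma,1} C_{I_2} T_{\gamma,2} C_{I_1}
 = T_{\gamma,1} C_{I_2} C_{I_1} T_{\gamma,2} \\
 &= -T_{\gamma,1} C_{I_1} C_{I_2} T_{\gamma,2}
 \equiv -T_{\gamma,2} T_{\gamma,1} C_{I_1} C_{I_2}
 = -T_{w_\gamma} C_{I_1} C_{I_2} \pmod{[\HCa, \HCa]},
\end{align*}
where the two $\equiv$'s come from cycling $C_{I_2}$ and then $T_{\gamma,2}$. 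Hence $2\, T_{w_\gamma} C_{I_1} C_{I_2} \in [\HCa, \HCa]$, and since $2$ is invertible in $\bf A$ and the element $T_{w_\gamma} C_{I_1} C_{I_2}$ is even, it lies in $\ev{[\HCa, \HCa]}$.

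For the case where $a, b$ are both even, I would apply Lemma \ref{lem:xnCI} twice. Identifying $T_{\gamma,1}$ with $\xn$ in $\mc A \cong \mc H^c_{\gamma_1, \bf A}$ gives $T_{\gamma,1} C_{I_1} = \pm T_{\gamma,1} + X$ with $X \in \ev{[\mc A, \mc A]}$. Right-multiplying by the even element $z := T_{\gamma,2} C_{I_2}$, which commutes ordinarily with $\mc A$, yields
$$
T_{w_\gamma} C_{I_1} C_{I_2} \equiv \pm T_{w_\gamma} C_{I_2} \pmod{\ev{[\HCa, \HCa]}},
$$
using the identity $[A, B] z = [A z, B]$, valid whenever $z$ commutes with both $A$ and $B$. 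Applying Lemma \ref{lem:xnCI} inside the shifted copy $\mc B \cong \mc H^c_{\gamma_2, \bf A}$ to reduce $T_{\gamma,2} C_{I_2}$ to $\pm T_{\gamma,2}$, and then left-multiplying by $T_{\gamma,1}$ (which commutes with $\mc B$), reduces $T_{w_\gamma} C_{I_2}$ to $\pm T_{w_\gamma}$ modulo $\ev{[\HCa, \HCa]}$, completing the claim.

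The only real subtlety is the identity $[A, B] z = [Az, B]$ used in the even case: this is crucial because $[\HCa, \HCa]$ is not a two-sided ideal in a general associative algebra, but the required closure under multiplication by $z$ holds whenever $z$ commutes with the relevant sub-algebra. The odd case is the cleaner of the two, relying only on the cyclic property and the Clifford anti-commutation to obtain a $-1$ that forces the element to be $2$-torsion modulo commutators.
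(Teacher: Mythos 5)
Your proof is correct. The odd case coincides with the paper's argument: cycle one block past the other, use $C_{I_1}C_{I_2}=-C_{I_2}C_{I_1}$ to get an even element congruent to its own negative, and divide by $2$. In the even case you take a more modular route than the paper. The paper does not invoke Lemma~\ref{lem:xnCI} as a black box; it repeats that lemma's reduction by hand, interleaved over the two blocks: using $T_{\ga,1}c_{i_1}=c_{i_1+1}T_{\ga,1}$ and $T_{\ga,2}c_{j_1}=c_{j_1+1}T_{\ga,2}$ it brings $c_{i_1+1}c_{j_1+1}$ to the front, cycles this pair to the back modulo $\ev{[\HCa,\HCa]}$, and iterates, at each step either shrinking $|I_1|+|I_2|$ or increasing $i_1,j_1$. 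You instead apply Lemma~\ref{lem:xnCI} once inside each block subalgebra ($\mc A\cong\mathcal{H}^c_{\ga_1,\bf A}$ and the shifted copy $\mc B\cong\mathcal{H}^c_{\ga_2,\bf A}$) and transport the resulting congruences to $\HCa$ by multiplying with an element of the other block, which centralizes the subalgebra; the identity $[A,B]z=[Az,B]$ for $z$ commuting with $B$ (plus the evenness of $z$ and of $|A|+|B|$) is exactly the point that needs justification, since the commutator space is not an ideal, and you supply it. Your organization buys reuse of Lemma~\ref{lem:xnCI} and lighter bookkeeping (and would generalize painlessly to the $\ell$-block situation of Lemma~\ref{lem:lengthgeneral}); the paper's interleaved reduction stays entirely inside $\HCa$ and never has to discuss pushing congruences out of a subalgebra. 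Two small simplifications available to you: only commutation of $z$ with $B$ is needed in $[A,B]z=[Az,B]$, and the isomorphism claims for $\mc A$, $\mc B$ are not strictly required, since any superalgebra homomorphism onto the block carries even commutators to even commutators of $\HCa$.
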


\begin{proof}
Since $a+b$ is even,  $a$ and $b$ have the same parity.
Note that
\begin{align*}
T_{w_{\ga}}=T_{\ga,1} & T_{\ga,2}  =T_{\ga,2}T_{\ga,1},\\
T_{\ga,1}C_{I_2}=C_{I_2}T_{\ga,1},  & \qquad
T_{\ga,2}C_{I_1}=C_{I_1}T_{\ga,2}.
\end{align*}

If both $a$ and $b$ are odd, then
$
C_{I_1}C_{I_2}=-C_{I_2}C_{I_1}.
$
It follows from these identities above that
\begin{align*}
T_{w_{\ga}}C_{I_1}C_{I_2}
&=T_{\ga,1}C_{I_1}T_{\ga,2}C_{I_2}\\
&\equiv T_{\ga,2}C_{I_2}T_{\ga,1}C_{I_1}
= -T_{w_\ga} C_{I_1}C_{I_2} ,
\end{align*}
where the notation $\equiv$ here and in similar expressions below is always
understood$\text{ mod}$
$\ev{ [\HCa, \HCa]}$.
Therefore,
$
T_{w_{\ga}}C_{I_1}C_{I_2}\equiv 0 \mod \ev{ [\HCa, \HCa]}.
$

If both $a$ and $b$ are even, then
$1\leq i_1\leq \ga_1-1$
and
$ \ga_1+1\leq j_1\leq \ga_1+\ga_2-1=n-1.
$
By a similar analysis as in Lemma~\ref{lem:xnCI}, one can obtain that
\begin{align*}
T_{w_{\ga}}C_{I_1}C_{I_2}
&=T_{\ga,1}T_{\ga,2}c_{i_1}c_{i_2}\cdots c_{i_a}c_{j_1}c_{j_2}\cdots c_{j_b}\\
&=-T_{\ga,1}c_{i_1}T_{\ga,2}c_{j_1}c_{i_2}\cdots c_{i_a}\cdots c_{j_2}c_{j_b}\\
&=-c_{i_1+1}c_{j_1+1}T_{\ga,1}T_{\ga,2}c_{i_2}\cdots c_{i_a}\cdots c_{j_2}c_{j_b}.
\end{align*}
Hence,
\begin{align*}
T_{w_{\ga}}C_{I_1}C_{I_2}
&=-c_{i_1+1}c_{j_1+1}T_{\ga,1}T_{\ga,2}c_{i_2}\cdots c_{i_a} c_{j_2}\cdots c_{j_b}\\
&\equiv-T_{\ga,1}T_{\ga,2}c_{i_2}\cdots c_{i_a}c_{j_2}\cdots c_{j_b}c_{i_1+1}c_{j_1+1} \\
&=\left\{
\begin{array}{cc}
T_{\ga,1}T_{\ga,2}c_{i_1+1}c_{i_2}
\cdots c_{i_a}c_{j_1+1}c_{j_2}\cdots c_{j_b} ,&\text{ if }i_1+1<i_2, j_1+1<j_2,\\
-T_{\ga,1}T_{\ga,2}c_{i_1+1}c_{i_2}
\cdots c_{i_a}c_{j_3}\cdots c_{j_b} ,&\text{ if }i_1+1<i_2, j_1+1=j_2,\\
-T_{\ga,1}T_{\ga,2}c_{i_3}\cdots c_{i_a}c_{j_1+1}c_{j_2}
\cdots c_{j_b},&\text{ if }i_1+1=i_2, j_1+1<j_2,\\
T_{\ga,1}T_{\ga,2}c_{i_3}\cdots c_{i_a}c_{j_3} \cdots
c_{j_b} ,&\text{ if }i_1+1=i_2, j_1+1=j_2.
\end{array}
\right.
\end{align*}
In this way, we reduce $T_{w_{\ga}}C_{I_1}C_{I_2}$ to a similar expression
with smaller $|I_1| +|I_2|$ or with increased $i_1$ and $j_1$.
Repeating the procedure, the proposition is proved.
\end{proof}

Given integers $a<b$, we shall denote by $[a..b]$ the set of
integers $k$ such that $a\le k \le b$. The following is a
generalization of Lemmas~\ref{lem:xnCI} and \ref{lem:length2}.

\begin{lemma}\label{lem:lengthgeneral}
Let $\ga=(\ga_1,\ga_2,\ldots,\ga_{\ell})$ be a composition of $n$,
and let $I \subseteq[n]$ with $|I|$ even. Let $I_k=I\cap
[(\ga_1+\ldots+\ga_{k-1}+1)..(\ga_1+\ldots+\ga_k)]$ for $1\leq
k\leq\ell$. Then
$$
T_{w_{\ga}}C_I\equiv\left\{
\begin{array}{ll}
\pm T_{w_{\ga}} \mod \ev{ [\HCa, \HCa]},
&\text{if every }|I_k|\text{ is even for }1\leq k\leq\ell,\\
0 \qquad \mod \ev{ [\HCa, \HCa]},&\text{otherwise}.
\end{array}
\right.
$$
\end{lemma}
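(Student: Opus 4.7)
The plan is to proceed by induction on the number of parts $\ell$ of $\ga=(\ga_1,\ldots,\ga_\ell)$, with the base cases $\ell=1$ and $\ell=2$ already covered by Lemmas~\ref{lem:xnCI} and \ref{lem:length2}. For the inductive step with $\ell\ge 3$, I would split off the first block by writing $\ga'=(\ga_2,\ldots,\ga_\ell)$ and $I'=I_2\cup\cdots\cup I_\ell$, and observing that $T_{\ga,1}$ and $C_{I_1}$ involve only the generators indexed by $1,\ldots,\ga_1$, while $T_{w_{\ga'}}$ and $C_{I'}$ involve only the generators indexed by $\ga_1+1,\ldots,n$. These two ``blocks'' generate sub-superalgebras $\mc{A},\mc{B}\subseteq\HCa$ isomorphic to $\mathcal{H}_{\ga_1,\mathbf{A}}^c$ and $\mathcal{H}_{n-\ga_1,\mathbf{A}}^c$ respectively, which super-commute inside $\HCa$. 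Consequently
\begin{equation*}
T_{w_\ga}C_I=u\cdot v,\qquad u:=T_{\ga,1}C_{I_1}\in\mc{A},\quad v:=T_{w_{\ga'}}C_{I'}\in\mc{B}.
\end{equation*}
Since $|I|$ is even, the parities of $|I_1|$ and $|I'|$ agree, giving two cases.

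If both $|I_1|$ and $|I'|$ are odd, then $u$ and $v$ are both odd, and super-commutativity yields $vu=(-1)^{|I_1|\cdot|I'|}uv=-uv$. Thus the ordinary commutator $uv-vu=2uv$ lies in $\ev{[\HCa,\HCa]}$ (it is even, being a sum of two even terms), and since $2$ is invertible in $\mathbf{A}$ we conclude $T_{w_\ga}C_I\equiv 0$.

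If both $|I_1|$ and $|I'|$ are even, apply Lemma~\ref{lem:xnCI} inside $\mc{A}$ to get $u\equiv\pm T_{\ga,1}$ modulo $\ev{[\mc{A},\mc{A}]}$; since $v$ is even and belongs to the super-commuting $\mc{B}$, $v$ actually commutes with every element of $\mc{A}$ in the ordinary sense, which promotes each commutator $[a,b]\in[\mc{A},\mc{A}]$ via $[a,b]v=[a,bv]$ into a commutator in $\HCa$. Hence $T_{w_\ga}C_I\equiv\pm T_{\ga,1}v=\pm T_{\ga,1}T_{w_{\ga'}}C_{I'}$. By the induction hypothesis applied to $T_{w_{\ga'}}C_{I'}\in\mc{B}$, this reduces to $\pm T_{\ga,1}T_{w_{\ga'}}=\pm T_{w_\ga}$ if every $|I_k|$ with $k\ge 2$ is even, and to $0$ otherwise; multiplying by the even element $T_{\ga,1}\in\mc{A}$ preserves the commutator structure by the same left-sided version of the above observation, and the result matches the statement of the lemma.

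The main obstacle will be the bookkeeping that allows commutators inside one sub-algebra to descend to commutators inside $\HCa$ after being multiplied by a factor from the other sub-algebra. This passage rests on the simple but crucial observation that whenever $c$ commutes in the ordinary sense with $a$ (or with $b$), one has $c\,[a,b]=[ca,b]$ (resp. $[a,cb]$), so such multiplication stays inside $[\HCa,\HCa]$; the super-commutation of $\mc{A}$ and $\mc{B}$ combined with the even parity of the multiplier $v$ (or $T_{\ga,1}$) is exactly what is needed to make the observation applicable in both remaining cases.
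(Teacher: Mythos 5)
Your argument is correct and rests on the same ingredients as the paper's proof: the two blocks super-commute inside $\HCa$, so when the Clifford parts are odd the product anticommutes and $T_{w_{\ga}}C_I\equiv -T_{w_{\ga}}C_I\equiv 0$ (using that $2$ is invertible in $\bf A$), while in the all-even case the Clifford factors are stripped off blockwise as in Lemma~\ref{lem:xnCI}. Packaging this as an induction on $\ell$, with Lemma~\ref{lem:xnCI} applied in the first block and the identity $[a,b]v=[a,bv]$ (valid since the even element $v$ commutes with the other block) used to push block congruences into $\ev{[\HCa,\HCa]}$, is only a mild reorganization of the paper's argument, which treats the all-even case by the simultaneous reduction of Lemma~\ref{lem:length2} and the remaining case by pairing two odd blocks.
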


\begin{proof}
The lemma for $\ell=1$ reduces to Lemma~\ref{lem:xnCI}. So assume
$\ell \ge 2$. If every $|I_k|$ is even for $1\leq k\leq\ell$, then
the lemma follows by a similar proof as in Lemma~\ref{lem:length2}
(which is the special case when $\ell=2$). Otherwise, suppose there
exists $1\leq a\leq\ell$ such that $|I_a|$ is odd. Without loss of
generality and for the sake of simplifying notations, we assume
$a=1$. Let $b>1$ be the smallest integer such that $I_b$ is odd
(such $b$ exists since $|I|=k$ is even). Therefore,
\begin{align*}
T_{w_{\ga}} C_{I}
&=(T_{\ga,1}C_{I_1}T_{\ga,2}C_{I_2}\cdots
T_{\ga,b-1}C_{I_{b-1}})(T_{\ga,b}C_{I_b})(T_{\ga,b+1}C_{I_{b+1}}
\cdots T_{\ga,\ell}C_{I_{\ell}})\\
&=(T_{\ga,1}C_{I_1})(T_{\ga,b}C_{I_b})(T_{\ga,2}C_{I_2}\cdots
T_{\ga,b-1}C_{I_{b-1}})(T_{\ga,b+1}C_{I_{b+1}}
\cdots T_{\ga,\ell}C_{I_{\ell}})\\
&=-(T_{\ga,b}C_{I_b})(T_{\ga,1}C_{I_1})(T_{\ga,2}C_{I_2}\cdots
T_{\ga,b-1}C_{I_{b-1}})(T_{\ga,b+1}C_{I_{b+1}}
\cdots T_{\ga,\ell}C_{I_{\ell}})\\
&\equiv-(T_{\ga,1}C_{I_1})(T_{\ga,2}C_{I_2}\cdots
T_{\ga,b-1}C_{I_{b-1}})(T_{\ga,b+1}C_{I_{b+1}}
\cdots T_{\ga,\ell}C_{I_{\ell}}) (T_{\ga,b}C_{I_b})  \\
& =-T_{w_{\ga}}C_{I}.
\end{align*}
Hence,
$
T_{w_{\ga}}C_{I}\equiv 0\mod { [\HCa, \HCa]}.
$
This completes the proof of the lemma.
\end{proof}

Denote by $\mathcal{H}_{n,\bf A}$ the subalgebra of $\HCa$ generated
by $T_1,\ldots, T_{n-1}$, which is the Hecke algebra over $\bf A$
associated to the symmetric group $S_n$.

\begin{lemma}\label{lem:GP8.2}
Suppose $\ga=(\ga_1,\ldots,\ga_{\ell})$ is a composition of $n$ and
let $\mu=(\mu_1,\ldots,\mu_{\ell})$ be the  partition obtained by a
rearrangement of the parts of $\ga$. Then
$$
T_{w_{\ga}}\equiv T_{w_{\mu}} \ev{\mod  [\HCa, \HCa]}.
$$
\end{lemma}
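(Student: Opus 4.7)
The plan is to reduce the claim to the case of two-part compositions and then invoke a cyclic-shift argument in the Hecke algebra of type $A$. Since any partition $\mu$ arises from the composition $\gamma$ by successive swaps of adjacent parts, I would first argue by induction on the number of such swaps that it suffices to handle the case when $\mu$ differs from $\gamma$ only by interchanging $\gamma_i$ and $\gamma_{i+1}$ for some index $i$.

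For such an adjacent swap, set $Z := \prod_{j \notin \{i, i+1\}} T_{\gamma, j}$. The blocks comprising $Z$ involve only generators $T_k$ with $k$ outside the range $[\gamma_1 + \cdots + \gamma_{i-1} + 1,\, \gamma_1 + \cdots + \gamma_{i+1} - 1]$, so $Z$ commutes with every element of the sub-Hecke algebra $\mathcal{H}_{\mathrm{sub}}$ acting on that contiguous block of $m := \gamma_i + \gamma_{i+1}$ indices, and moreover $Z$ coincides with the analogous product for $\mu$. Applying the identity $Z[A, B] = [ZA, B] - [Z, B] A$, which simplifies to $[ZA, B] \in [\HCa, \HCa]$ whenever $[Z, B] = 0$, one sees that the difference $T_{w_\gamma} - T_{w_\mu} = Z\bigl(T_{\gamma,i}T_{\gamma,i+1} - T_{\mu,i}T_{\mu,i+1}\bigr)$ lies in $[\HCa, \HCa]$ provided the inner difference lies in $[\mathcal{H}_{\mathrm{sub}}, \mathcal{H}_{\mathrm{sub}}]$. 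By the standard shift isomorphism, the task reduces to showing, for positive integers $a, b$ with $m = a+b$,
\[
  T_{w_{(a, b)}} \;\equiv\; T_{w_{(b, a)}} \pmod{[\mathcal{H}_{m, \mathbf A}, \mathcal{H}_{m, \mathbf A}]}.
\]

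The permutations $w_{(a, b)}$ and $w_{(b, a)}$ are conjugate in $S_m$ (both have cycle type the partition $(a, b)$) and share the common minimum length $m-2$ in that conjugacy class. The cyclic-shift lemma of Geck--Pfeiffer for type $A$ Hecke algebras then yields the congruence: one connects $w_{(a, b)}$ to $w_{(b, a)}$ by a chain of elementary conjugations $w \mapsto s_j w s_j$ each preserving the length, and each such step translates into $T_w \equiv T_{s_j} T_w T_{s_j}^{-1} = T_{s_j w s_j} \pmod{[\mathcal{H}_{m, \mathbf A}, \mathcal{H}_{m, \mathbf A}]}$ (the first congruence being the trivial $PQ \equiv QP$, and the equality being exact in the Hecke algebra when the intermediate lengths are compatible). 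As an independent sanity check, the case $a=1$ reduces to $T_2 T_3 \cdots T_{m-1} \equiv T_1 T_2 \cdots T_{m-2}$, which in turn follows from the elementary congruence $T_i \equiv T_{i+1} \pmod{[\mathcal{H}_{m, \mathbf A}, \mathcal{H}_{m, \mathbf A}]}$ obtained by expanding both sides of the braid relation $T_i T_{i+1} T_i = T_{i+1} T_i T_{i+1}$ modulo commutators, applying the quadratic relation $T_i^2 = (v-1) T_i + v$ on each side, and cancelling the invertible factor $v \in \mathbf A$.

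The principal subtlety throughout is that $[\HCa, \HCa]$ is not a left ideal of $\HCa$, so one cannot freely left-multiply commutators. The reduction to the two-part case skirts this issue precisely because $Z$ commutes with the inner sub-algebra, making the extra term $[Z, B] A$ vanish; and the cyclic-shift manipulations in the two-part case use only the identity $PQ \equiv QP$ together with exact Hecke-algebra equalities, never invoking any (false) closure of $[\HCa, \HCa]$ under left multiplication. The hardest bookkeeping is in the second step — checking that the chain of length-preserving conjugations connecting $w_{(a,b)}$ to $w_{(b,a)}$ can indeed be realized — but this is well established in the Geck--Pfeiffer framework for type $A$ Coxeter groups.
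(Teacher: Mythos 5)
Your argument is correct, but it takes a different (and longer) route than the paper. The paper's proof is a two-line citation: the congruence $T_{w_{\ga}}\equiv T_{w_{\mu}} \mod [\mathcal{H}_{n,\bf A},\mathcal{H}_{n,\bf A}]$ for an arbitrary composition and its sorted partition is quoted wholesale from Ram (Theorem 5.1) or Geck--Pfeiffer's book (Section 8.2), and the lemma then follows from the containment $[\mathcal{H}_{n,\bf A},\mathcal{H}_{n,\bf A}]\subseteq \ev{[\HCa,\HCa]}$. You instead re-derive most of that Hecke-algebra fact: you reduce to an adjacent swap of parts, factor $T_{w_\ga}-T_{w_{\ga'}}=Z(A-B)$ with $Z$ commuting with the relevant sub-Hecke algebra, use the (correct) identity $Z[P,Q]=[ZP,Q]$ to push commutators of the subalgebra into $[\HCa,\HCa]$ without illegitimately multiplying into the commutator space, and then settle the two-part case $T_{w_{(a,b)}}\equiv T_{w_{(b,a)}}$ by citing Geck--Pfeiffer's theorem that minimal length representatives of a conjugacy class give congruent standard basis elements modulo commutators (both elements indeed have the minimal length $m-2$ for cycle type $\{a,b\}$). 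This is sound, since all elements involved are even, so the difference lands in $\ev{[\HCa,\HCa]}$ as required; what it buys is a more self-contained reduction, at the cost of still resting on a nontrivial citation, just a different one from the paper's.

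One caveat on your "sanity check" for $a=1$: the step asserting that $T_2T_3\cdots T_{m-1}\equiv T_1T_2\cdots T_{m-2}$ "follows from" $T_i\equiv T_{i+1}$ is not valid as written, because $[\mathcal{H}_{m,\bf A},\mathcal{H}_{m,\bf A}]$ is not an ideal and congruences modulo it cannot be substituted factor-by-factor inside a product. Since this is only an aside and your main chain of reasoning goes through the Geck--Pfeiffer cyclic-shift theorem rather than through this substitution, it does not affect the correctness of the proof, but the aside should either be deleted or argued by the same conjugation mechanism as the general case.
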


\begin{proof}
It is known (see \cite[Theorem 5.1]{Ram} or \cite[Section 8.2]{GP2})
that $ T_{w_{\ga}} \equiv T_{w_{\mu}}\mod [\mathcal{H}_{n,\bf
A},\mathcal{H}_{n,\bf A}]. $ The lemma now follows since $
[\mathcal{H}_{n,\bf A},\mathcal{H}_{n,\bf A}]\subseteq \ev{[\HCa,
\HCa]}. $
\end{proof}

Recall that $\mc{OP}_n$ denotes the set of odd partitions of $n$.

\begin{theorem}\label{thm:spacetrace}
For each $\sigma\in S_n$ and $I\subseteq [n]$ with $|I|$ even, there
exist $ f_{\sigma,I;\nu}\in \bf A$ such that
\begin{align}\label{eq:commutator}
T_{\sigma}C_I\equiv \sum_{\nu\in\mc{OP}_n}
f_{\sigma,I;\nu}T_{w_{\nu}} \mod \ev{ [\HCa, \HCa]}.
\end{align}
\end{theorem}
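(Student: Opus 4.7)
The plan is to combine the three preceding lemmas in sequence to reduce $T_\sigma C_I$ modulo $\ev{[\HCa,\HCa]}$ to an ${\bf A}$-linear combination of elements $T_{w_\mu}$ indexed by partitions $\mu\in\mc{P}_n$, and then, as the main step, cut the indexing set down further to the odd partitions. First I apply Lemma~\ref{lem:Ram5.1} to rewrite
\[
T_\sigma C_I \equiv \sum_{\gamma,J} a_{(J,\gamma)}\,T_{w_\gamma} C_J \mod [\HCa,\HCa];
\]
since the rewriting rules in the proof of that lemma never change the number of Clifford generators modulo $2$, the parity of $|J|$ equals the parity of $|I|$, and hence each $|J|$ is even. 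Lemma~\ref{lem:lengthgeneral} then replaces each $T_{w_\gamma} C_J$ by $\pm T_{w_\gamma}$ or $0$ modulo $\ev{[\HCa,\HCa]}$, and Lemma~\ref{lem:GP8.2} rearranges the composition $\gamma$ into the partition $\mu\in\mc{P}_n$ obtained by sorting. The outcome is $T_\sigma C_I \equiv \sum_{\mu\in\mc{P}_n} a_\mu T_{w_\mu} \mod \ev{[\HCa,\HCa]}$ with $a_\mu\in{\bf A}$.

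The main step is to show that every $T_{w_\mu}$ with $\mu\in\mc{P}_n\setminus\mc{OP}_n$ is congruent modulo $\ev{[\HCa,\HCa]}$ to an ${\bf A}$-combination of $T_{w_\nu}$ for $\nu\in\mc{OP}_n$. I would do this by an outer induction on $n$. If $\mu$ has an even part $\mu_j<n$, I isolate $T_{w_{(\mu_j)}}$ inside the subalgebra $\mathcal{H}^c_{\mu_j,{\bf A}}\subset\HCa$ acting on the corresponding window; the outer inductive hypothesis, applied to $\mathcal{H}^c_{\mu_j,{\bf A}}$, writes $T_{w_{(\mu_j)}}$ as a combination of $T_{w_{\nu'}}$ for $\nu'\in\mc{OP}_{\mu_j}$, modulo $\ev{[\mathcal{H}^c_{\mu_j,{\bf A}},\mathcal{H}^c_{\mu_j,{\bf A}}]}$. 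Since the even element $T_{w_{\mu\setminus\mu_j}}$ commutes with the window subalgebra, multiplying by it produces a congruence in $\ev{[\HCa,\HCa]}$, and Lemma~\ref{lem:GP8.2} rearranges to a partition of $n$ with strictly fewer even parts. Iterating on the number of even parts finishes this case.

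The hard case, and the main obstacle, is the single-part case $\mu=(N)$ with $N$ even, where no smaller window is available. Here I exploit the even-parity commutator $[c_N,\,c_N T_{w_{(N)}}] = T_{w_{(N)}} - c_N T_{w_{(N)}} c_N \in \ev{[\HCa,\HCa]}$, which yields $T_{w_{(N)}} \equiv c_N T_{w_{(N)}} c_N$. Expanding the right-hand side via Lemma~\ref{lem:coxHn} and using the identity $T_i' c_{i+1}=c_i T_i'$ (easily derived from the defining relations of $\HCa$), one obtains the clean summand $(v-1)T_{w_{(N-1)}}$, which is already of the desired form since the partition $(N-1,1)\in\mc{OP}_N$. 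Every remaining cross term has the shape $c_a c_N X$ with $a<N$ and $X$ a Hecke element of length at most $N-2$; by cyclic commutativity modulo $\ev{[\HCa,\HCa]}$ these become $X c_a c_N = T_\sigma C_{\{a,N\}}$, an instance of the theorem's hypothesis with $|J|=2$ even and $\ell(\sigma)\le N-2$. Feeding them back through Lemmas~\ref{lem:Ram5.1}, \ref{lem:lengthgeneral}, and \ref{lem:GP8.2} produces terms $T_{w_{\mu'}}$ with $\ell(w_{\mu'})\le N-2$, hence $\ell(\mu')\ge 2$; by the multi-part argument of the preceding paragraph each such $T_{w_{\mu'}}$ reduces to an $\mc{OP}_N$-combination.

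The crux to verify is that in this last step the cross terms really do produce only partitions $\mu'$ with $\ell(\mu')\ge 2$ (so that the multi-part argument applies) rather than reintroducing $T_{w_{(N)}}$ itself, so that the reduction is genuinely decreasing. This rests on Lemma~\ref{lem:Ram5.1}'s preservation of a length bound combined with the explicit length estimate $\ell(\sigma)\le N-2$ for every Hecke component appearing in Lemma~\ref{lem:coxHn}'s expansion of $T_{w_{(N)}} c_N$. Once these bookkeeping points are in place, the theorem follows by assembling the three reduction lemmas with the induction described.
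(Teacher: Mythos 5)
Your first reduction (Lemmas~\ref{lem:Ram5.1}, \ref{lem:lengthgeneral}, \ref{lem:GP8.2} bring everything to proving \eqref{eq:munu} for $T_{w_\mu}$, $\mu\in\mc P_n$) is exactly the paper's, and your window-induction for a partition with an even part $\mu_j<n$ is a workable variant of what the paper does. The genuine gap is in your ``hard case'' $\mu=(N)$ with $N$ even, and it is fatal as written. The congruence $T_{w_{(N)}}\equiv c_N T_{w_{(N)}}c_N$ is vacuous: since $c_N$ commutes with $T_1,\dots,T_{N-2}$ and $c_NT_{N-1}=T_{N-1}c_{N-1}$, one has the exact identity $c_N\xn=\xn c_{N-1}$, hence
\begin{equation*}
c_N T_{w_{(N)}} c_N \;=\; T_{w_{(N)}}\,c_{N-1}c_N \;\equiv\; T_{w_{(N)}}\,c_N c_N \;=\; T_{w_{(N)}} \mod \ev{[\HCa,\HCa]},
\end{equation*}
with a plus sign. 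So conjugation by a single $c_N$ gives $T_{w_{(N)}}\equiv T_{w_{(N)}}$ and no reduction at all; in particular your ``clean summand'' $(v-1)T_{w_{(N-1)}}$ cannot survive — it must cancel against the other terms once they are reduced. Concretely, the bookkeeping claim you flag as the crux is false: in the expansion of $\xn c_N$ from Lemma~\ref{lem:coxHn}, the first summand $c_1T_1'T_2'\cdots T_{N-1}'$ contains the monomial $c_1\xn$ of Hecke length $N-1$ (not $\le N-2$), and after multiplying by $c_N$ and reducing the Clifford pair via Lemma~\ref{lem:xnCI} it reintroduces $\pm T_{w_{(N)}}$; the exact computation above shows the net coefficient is $+1$, so your reduction is not decreasing.

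What makes the paper's argument work is precisely a sign you do not produce: it conjugates by the \emph{full} Clifford product over the even window, $y=c_1c_2\cdots c_N$ ($N$ even), for which $y^{-1}\xn y=-c_1\xn c_N$. Multiplying the Lemma~\ref{lem:coxHn} expansion by $-c_1$ collapses the leading Clifford factor ($c_1\cdot c_1=1$), so the top term is exactly $-\xn$ and all remaining terms are of the form $T_\sigma c_ic_j$ with $\ell(\sigma)<\ell(w_{(N)})$. Combined with $T_{w_{(N)}}\equiv y^{-1}T_{w_{(N)}}y$, this yields $2T_{w_{(N)}}\equiv Z$ with $Z$ of strictly smaller length, and one divides by $2\in{\bf A}$ and inducts on $\ell(w_\mu)$ (the same conjugation handles an even part inside any $\mu$, so the paper needs no separate multi-part case). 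To repair your proof you must replace the single-generator conjugation by one that produces this net factor $-1$ on the top term — e.g.\ the paper's $y$ — and redo the length bookkeeping accordingly; as it stands, the single-part even case, which is the heart of the theorem, is not proved.
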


\begin{proof}
By Lemma~\ref{lem:Ram5.1}, Lemma~\ref{lem:lengthgeneral} and
Lemma~\ref{lem:GP8.2}, it suffices to show that, for  each partition
$\mu$ of $n$, there exists $f_{\mu;\nu}\in\bf A$ such that
\begin{equation}  \label{eq:munu}
T_{w_{\mu}}\equiv \sum_{\nu\in\mc{OP}_n}f_{\mu;\nu}T_{w_{\nu}} \mod \ev{ [\HCa, \HCa]}.
\end{equation}

Let us assume  for a moment  that $n$ is even. Set $y_n:=
c_1c_2\ldots c_n$. By  Lemma~\ref{lem:coxHn}, we calculate that
$$y_n^{-1} \xn y_n
=(c_1c_2\ldots c_n)^{-1} (c_2c_3\ldots c_n)\xn c_n
=-c_1 \xn c_n.
$$
By the second identity in Lemma~\ref{lem:coxHn} (and expanding the
$T_i'$ therein as a sum of monomials), $-c_1\xn c_n$ can be written
as $-\xn +$ a linear combination of elements of the form $c_kc_m
T_\sigma$ with $\ell(\sigma) \le n-2 =\ell(w_{(n)})-1$. Equivalently,
$y_n^{-1} \xn y_n=-c_1\xn c_n$ can be written as $-\xn +$ a linear
combination of elements of the form $T_\sigma c_ic_j$ with
$\ell(\sigma)<\ell(w_{(n)})$.

Now we come to the proof of \eqref{eq:munu}. Let us assume that
$\mu$ is a partition of $n$ with an even part $\mu_a$ for some $a\in
\{1,\ldots,\ell(\mu)\}$. Set
$$
y :=c_{(\mu_1+\ldots +\mu_{a-1}+1)}c_{(\mu_1+\ldots
+\mu_{a-1}+2)}\ldots c_{(\mu_1+\ldots +\mu_{a-1}+\mu_{a})}.
$$
Then, the computation in the previous paragraph is applicable to
$y^{-1} T_{\mu,a} y$, which in turn implies that
\begin{equation}\label{eq:oddminimal}
y^{-1} T_{w_\mu} y =-T_{w_\mu} +Z,
\end{equation}
where $Z$ is a linear combination of elements of the form $T_\sigma
c_ic_j$ where $\ell(\sigma) < \ell(w_\mu)$. By
Lemma~\ref{lem:Ram5.1}, $T_\sigma c_ic_j$ is a linear combination of
$T_{w_\la} C_J$ with $\ell(w_\la) \le \ell(\sigma)$. Hence, $Z$ is a
linear combination of elements of the form $T_{w_\la} C_I$ with
$\ell(w_\la)< \ell(w_\mu)$. By Lemma~\ref{lem:lengthgeneral},
$$
\hf Z \equiv  \text{  a linear combination of $T_{w_\la}$ with
$\ell(w_\la)< \ell(w_\mu)$,$\mod \ev{ [\HCa, \HCa]}$.}
$$
On the other hand, we have
$
T_{w_\mu} =(T_{w_\mu} y)y^{-1} \equiv y^{-1} T_{w_\mu} y\mod \ev{ [\HCa, \HCa]}.
$
This together with~\eqref{eq:oddminimal} implies that
$$
T_{w_\mu} \equiv\frac12  Z \mod \ev{ [\HCa, \HCa]}.
$$
Now the proof is completed by induction on the length $\ell(w_\mu)$.
\end{proof}

\subsection{The space of trace functions and character table of $\HCa$}

Theorem~\ref{thm:spacetrace} has the following implication.

\begin{theorem}\label{th:basistrace}
$\ev{(\HCa/[\HCa,\HCa])}$ is a free $\bf A$-module, with a basis
consisting of the images of $T_{w_{\nu}}$ for $\nu\in\mc{OP}_n$
under the projection $\HCa\rightarrow\HCa/[\HCa,\HCa]$.
\end{theorem}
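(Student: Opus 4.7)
My plan is to derive both the basis statement and the $\mathbf{A}$-freeness from Theorem~\ref{thm:spacetrace} combined with a dimension count after extending scalars to $\K$. First, for spanning: the $\mathbf{A}$-basis $\{T_\sigma C_I : \sigma\in S_n,\, I\subseteq[n]\}$ of $\HCa$ splits according to the parity of $|I|$, so $\ev{\HCa}$ is $\mathbf{A}$-free with basis $\{T_\sigma C_I : |I|\text{ even}\}$. Theorem~\ref{thm:spacetrace} rewrites each such basis vector modulo $\ev{[\HCa,\HCa]}$ as an $\mathbf{A}$-linear combination of the elements $T_{w_\nu}$ with $\nu\in\mc{OP}_n$. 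Hence the images of these $T_{w_\nu}$ generate $\ev{(\HCa/[\HCa,\HCa])}$ as an $\mathbf{A}$-module.

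For $\mathbf{A}$-linear independence I would pass to $\K$. Since $\HCK=\K\otimes_{\mathbf{A}}\HCa$ with $[\HCK,\HCK]=\K\otimes_{\mathbf{A}}[\HCa,\HCa]$, and $\mathbf{A}\hookrightarrow\K$, it suffices to prove that the images $\{T_{w_\nu}\}_{\nu\in\mc{OP}_n}$ are $\K$-linearly independent in $\ev{(\HCK/[\HCK,\HCK])}$. By Proposition~\ref{prop:JN}, $\HCK$ is split semisimple with irreducible characters $\{\zeta^\la : \la\in\mc{SP}_n\}$, and these are $\K$-linearly independent as trace functions, because their restrictions to pairwise non-isomorphic simple blocks of $\HCK$ are manifestly so. Since the space of trace functions on $\HCK$ is $\K$-dual to $\ev{(\HCK/[\HCK,\HCK])}$ by the discussion at the start of Section~\ref{sec:trace}, this gives the inequality $\dim_\K \ev{(\HCK/[\HCK,\HCK])}\ge |\mc{SP}_n|$. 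The spanning statement of the preceding paragraph gives the reverse inequality $\dim_\K\ev{(\HCK/[\HCK,\HCK])}\le |\mc{OP}_n|$, and Euler's classical bijection $|\mc{SP}_n|=|\mc{OP}_n|$ forces equality. A spanning set of matching cardinality is automatically a $\K$-basis, and linear independence descends to $\mathbf{A}$.

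The only real step beyond quoting Theorem~\ref{thm:spacetrace} is establishing that the $|\mc{SP}_n|$ characters $\zeta^\la$ really give rise to $|\mc{SP}_n|$ linearly independent trace functionals; this is a routine fact for a split semisimple superalgebra, following from the block decomposition together with the analogous one-dimensionality statements for the quotients $\ev{M(V)}/[\ev{M(V)},\ev{M(V)}]$ and $\ev{Q(V)}/\ev{[Q(V),Q(V)]}$. As a less abstract alternative, one could apply Theorem~\ref{thm:Frobenius} directly: a putative dependence $\sum_{\nu\in\mc{OP}_n} a_\nu T_{w_\nu}\equiv 0\mod\ev{[\HCa,\HCa]}$ would force $\sum_\nu a_\nu \widetilde{g}_\nu(x;v)=0$, and one checks via specialization at $v=1$, at which $\widetilde{g}_\nu(x;1)$ is a nonzero scalar multiple of the odd power sum $p_\nu$, that the spin Hall--Littlewood functions $\widetilde g_\nu$ with $\nu\in\mc{OP}_n$ are linearly independent. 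Either route completes the argument.
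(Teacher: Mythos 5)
Your proposal is correct and follows essentially the same route as the paper: spanning of $\ev{(\HCa/[\HCa,\HCa])}$ by the images of $T_{w_\nu}$ via Theorem~\ref{thm:spacetrace}, then base change to $\K$ and a dimension count using the split semisimplicity of $\HCK$ (Proposition~\ref{prop:JN}) together with $|\mc{SP}_n|=|\mc{OP}_n|$ to force linear independence, which descends to $\bf A$. Your spelled-out two-sided inequality and the optional Frobenius-formula alternative are just more explicit versions of the same argument.
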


\begin{proof}
By Theorem~\ref{thm:spacetrace}, $\ev{(\HCa/[\HCa,\HCa])}$ is
spanned by the images of the elements $T_{w_{\nu}}$ with
$\nu\in\mc{OP}_n$ under the projection
$\HCa\rightarrow\HCa/[\HCa,\HCa]$. Passing to the splitting field
$\mathbb K$ for Hecke-Clifford algebra, the images of the elements
$T_{w_{\nu}}$ with $\nu\in\mc{OP}_n$ remain to be a spanning set for
$\ev{(\HCK/[\HCK,\HCK])}$. By Proposition~\ref{prop:JN}, $\HCK$ is
semisimple and its non-isomorphic irreducible characters are
parametrized by $\mc{SP}_n$. It follows that the dimension of the
space of trace functions on $\HCK$ is
$$
\dim_{\mathbb K} \ev{(\HCK/[\HCK,\HCK])} = |\mc{SP}_n|=|\mc{OP}_n|.
$$
Hence the images of $T_{w_{\nu}}$ with $\nu\in\mc{OP}_n$ are
linearly independent in $\ev{(\HCK/[\HCK,\HCK])}$ as well as in
$\ev{(\HCa/[\HCa,\HCa])}$. This proves the theorem.
\end{proof}

\begin{corollary}\label{cor:spacetrace}
Every trace function $\phi:\HCa\rightarrow {\bf A}$ is uniquely
determined by its values on the elements $T_{w_{\nu}}$ for
$\nu\in\mc{OP}_n$. Moreover, the polynomials $f_{\sigma,I;\nu}$ in
\eqref{eq:commutator} are uniquely determined by $\sigma, I$ and
$\nu$.
\end{corollary}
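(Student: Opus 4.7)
The plan is to derive both assertions directly from Theorem~\ref{th:basistrace}, since the hard work has already been done there. Recall from the opening discussion of Section~\ref{sec:trace} that an ${\bf A}$-linear map $\phi : \HCa \to {\bf A}$ is a trace function if and only if $\phi$ vanishes on $\od{\HCa}$ and on $\ev{[\HCa,\HCa]}$. Consequently the space of trace functions is canonically identified with $\Hom_{\bf A}\bigl(\ev{(\HCa/[\HCa,\HCa])},\, {\bf A}\bigr)$, each $\phi$ corresponding to the induced functional $\bar\phi$ on the even quotient.

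For the first assertion, I would invoke Theorem~\ref{th:basistrace}, which identifies $\ev{(\HCa/[\HCa,\HCa])}$ as a free ${\bf A}$-module with basis the images of $T_{w_\nu}$ for $\nu\in\mc{OP}_n$. Since an ${\bf A}$-linear functional on a free module is uniquely determined by its values on a basis, $\bar\phi$ is determined by the scalars $\phi(T_{w_\nu})$ with $\nu\in\mc{OP}_n$, and hence so is $\phi$. This immediately yields the uniqueness claim for trace functions.

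For the second assertion, the key observation is that because $|I|$ is even, the element $T_\sigma C_I$ lies in $\ev{\HCa}$, so the congruence \eqref{eq:commutator} descends to an honest equality in $\ev{(\HCa/[\HCa,\HCa])}$. The linear independence over ${\bf A}$ of the images of $\{T_{w_\nu}\}_{\nu\in\mc{OP}_n}$ in this module---again guaranteed by Theorem~\ref{th:basistrace}---forces the coefficients $f_{\sigma,I;\nu}\in {\bf A}$ to be uniquely determined by $\sigma$, $I$, and $\nu$. There is no real obstacle to overcome at this stage: the substantive content lies in Theorem~\ref{th:basistrace}, whose proof established freeness by passing to the splitting field $\mathbb K$ and matching the spanning set $\{\overline{T_{w_\nu}}\}$ against the $|\mc{SP}_n|=|\mc{OP}_n|$ irreducible characters of $\HCK$ furnished by Proposition~\ref{prop:JN}. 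The corollary is then pure bookkeeping.
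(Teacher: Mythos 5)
Your argument is correct and is exactly the route the paper intends: the corollary is stated as an immediate consequence of Theorem~\ref{th:basistrace}, with trace functions identified with ${\bf A}$-linear functionals on the free module $\ev{(\HCa/[\HCa,\HCa])}$ and uniqueness of the $f_{\sigma,I;\nu}$ following from the linear independence of the images of the $T_{w_\nu}$, $\nu\in\mc{OP}_n$. Nothing is missing.
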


For $\sigma\in S_n$ and $I\subseteq [n]$ with $|I|$ even and
$\nu\in\mc{OP}_n$, $f_{\sigma,I;\nu}$ are called {\em class
polynomials}, and they are spin analogues of the class polynomials
introduced by Geck-Pfeiffer \cite[Definition 1.2(2)]{GP1} (cf.
\cite[Section 8.2]{GP2}) for Hecke algebras associated to finite
Weyl groups.
The square matrix
$$
\big[\zeta^{\la}(T_{w_{\nu}}) \big]_{\la\in\mc{SP}_n,\nu\in\mc{OP}_n}
$$
is called the {\em character table} of the Hecke-Clifford algebra
$\HCK$. By Corollary~\ref{cor:spacetrace} and the linear
independence of irreducible characters $\zeta^{\la}$ for
$\la\in\mc{SP}_n$, the square matrix $[\zeta^{\la}(T_{w_{\nu}})]
_{\la,\nu}$ is invertible in $\K$.

\begin{remark}
Note that $w_{\nu}$ is a minimal length representative in the
conjugacy class $C$ in $S_n$ of cycle type $\nu\in\mc{OP}_n$. Let
$w_C$ be another minimal length representative in the same conjugacy
class. It is known from \cite[Theorem 1.1]{GP1} that
$T_{w_{\nu}}\equiv T_{w_C}\mod [\mc{H}_{n,\bf A}, \mc{H}_{n,\bf A}]$
and hence $T_{w_{\nu}}\equiv T_{w_C}\mod [\HCa,\HCa]$ thanks to
$[\mc{H}_{n,\bf A}, \mc{H}_{n,\bf A}]\subseteq \ev{[\HCa,\HCa]}$.
Thus our definition of the character table of $\HCK$ is independent
of the choice of minimal length representatives in conjugacy classes
of cycle type being odd partitions of $n$. Moreover, specializing
$v=1$, the matrix $(\zeta^{\la}(T_{w_{\nu}}))
_{\la\in\mc{SP}_n,\nu\in\mc{OP}_n}$ reduces to  the character table
of the algebra $\mf H^c_n$  (cf. \cite{WW3}).
\end{remark}

For $\nu\in\mc{OP}_n$, define an $\bf A$-linear map $f_{\nu}:\HCa
\rightarrow {\bf A}$ by
\begin{align*}
f_\nu(T_{\sigma}C_I)
=
\left\{
\begin{array}{ll}
f_{\sigma,I;\nu}, & \text{ if }|I|\text{ is even}\\
0, &  \text{ if }|I|\text{ is odd}.
\end{array}
\right.
\end{align*}

\begin{proposition} \label{prop:HCpoly}
For each $\nu\in\mc{OP}_n$, $f_{\nu}$ is a trace function on $\HCa$
which satisfies
\begin{equation}  \label{eq:fdual}
f_{\nu}(T_{w_{\rho}})=\delta_{\nu,\rho}, \qquad \text{ for } \rho\in\mc{OP}_n.
\end{equation}
Moreover, $\{f_{\nu}|\nu\in\mc{OP}_n\}$ is a basis for the space of
trace functions on $\HCa$.
\end{proposition}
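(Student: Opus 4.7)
The plan is to verify three things in turn: (i) each $f_\nu$ annihilates $\ev{[\HCa,\HCa]}$, so that together with its vanishing on odd elements it qualifies as a trace function; (ii) the dual pairing $f_{\nu}(T_{w_{\rho}})=\delta_{\nu,\rho}$; and (iii) the collection $\{f_\nu\}$ spans, and then counts correctly to give a basis.

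For (i), I would take a typical even commutator $x \in \ev{[\HCa,\HCa]}$ and expand it in the standard basis as $x=\sum a_{\sigma,I}T_{\sigma}C_I$ with $|I|$ even throughout. Applying Theorem~\ref{thm:spacetrace} term by term yields
\[
x \;\equiv\; \sum_{\nu\in\mc{OP}_n}\Bigl(\sum_{\sigma,I} a_{\sigma,I}\,f_{\sigma,I;\nu}\Bigr)T_{w_{\nu}} \mod \ev{[\HCa,\HCa]}.
\]
Since the images of $\{T_{w_{\nu}}\}_{\nu\in\mc{OP}_n}$ are $\bf A$-linearly independent in $\ev{(\HCa/[\HCa,\HCa])}$ by Theorem~\ref{th:basistrace}, and the left-hand side is zero in this quotient, each coefficient $\sum_{\sigma,I} a_{\sigma,I}\,f_{\sigma,I;\nu}$ vanishes. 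But this coefficient is precisely $f_\nu(x)$, so $f_\nu$ kills $\ev{[\HCa,\HCa]}$. Combined with the defining vanishing on odd basis elements, $f_\nu$ is a trace function.

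For (ii), note $T_{w_{\rho}}=T_{w_{\rho}}C_\emptyset$ with $|\emptyset|=0$ even, so $f_{\nu}(T_{w_{\rho}})=f_{w_{\rho},\emptyset;\nu}$. Applying Theorem~\ref{thm:spacetrace} to $T_{w_{\rho}}$ itself gives $T_{w_{\rho}}\equiv\sum_{\nu}f_{w_{\rho},\emptyset;\nu}T_{w_{\nu}}\mod\ev{[\HCa,\HCa]}$, and comparison with the trivial identity $T_{w_{\rho}}\equiv T_{w_{\rho}}$ together with the linear independence in Theorem~\ref{th:basistrace} forces $f_{w_{\rho},\emptyset;\nu}=\delta_{\nu,\rho}$, giving \eqref{eq:fdual}.

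For (iii), the space of trace functions on $\HCa$ is canonically identified with $\Hom_{\bf A}(\ev{(\HCa/[\HCa,\HCa])},{\bf A})$, as noted at the start of Section~4. By Theorem~\ref{th:basistrace}, the target is a free $\bf A$-module with basis the images of $\{T_{w_{\nu}}\}_{\nu\in\mc{OP}_n}$. The relation \eqref{eq:fdual} established in (ii) shows that $\{f_\nu\}_{\nu\in\mc{OP}_n}$ is the dual basis, hence an $\bf A$-basis of the space of trace functions. The only step that requires genuine input is (i), where the well-definedness hinges critically on the uniqueness of class polynomials (Corollary~\ref{cor:spacetrace}); everything else is a formal consequence of Theorem~\ref{th:basistrace}.
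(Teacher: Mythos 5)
Your proposal is correct, but the key step is argued along a genuinely different route than the paper. Where you show that $f_{\nu}$ is a trace function by checking directly over $\bf A$ that it annihilates $\ev{[\HCa,\HCa]}$ --- expand an even element of the commutator subspace in the homogeneous basis $\{T_{\sigma}C_I\}$, push it through \eqref{eq:commutator}, and invoke the $\bf A$-linear independence of the images of $T_{w_{\nu}}$ from Theorem~\ref{th:basistrace} --- the paper instead passes to the splitting field $\K$: it evaluates the irreducible characters $\zeta^{\la}$ on \eqref{eq:commutator}, uses the invertibility of the character table $\big[\zeta^{\la}(T_{w_{\nu}})\big]$ to write $f_{\nu}=\sum_{\la\in\mc{SP}_n}g_{\la;\nu}\zeta^{\la}$ with $g_{\la;\nu}\in\K$, and concludes that $f_{\nu}$ is a trace function on $\HCK$, hence on $\HCa$. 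Your argument is the more elementary one: it never leaves $\bf A$, uses no characters or semisimplicity beyond what is already packaged in Theorem~\ref{th:basistrace}, and makes transparent that $\{f_{\nu}\}$ is literally the dual basis under the identification of trace functions with $\Hom_{\bf A}(\ev{(\HCa/[\HCa,\HCa])},{\bf A})$. The paper's route buys an explicit expression of $f_{\nu}$ in the span of the irreducible characters (essentially a column of the inverse character table), which is conceptually useful but not needed for the proposition. Your treatment of \eqref{eq:fdual} (comparing the tautological congruence $T_{w_{\rho}}\equiv T_{w_{\rho}}$ with \eqref{eq:commutator} and using uniqueness of the class polynomials) and of the basis claim agrees with the paper, and I see no gap.
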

\begin{proof}
Recall that the distinct irreducible characters of
$\HCK=\K\otimes_{\bf A}\HCa$ are given by $\zeta^{\la}$ for
$\la\in\mc{SP}_n$. By Theorem~\ref{thm:spacetrace},  for any
$\la\in\mc{SP}_n, \sigma\in S_n$ and $I\subseteq [n]$ with $|I|$
even we have
$$
\zeta^{\la}(T_{\sigma}C_I)
=\sum_{\nu\in\mc{OP}_n} f_{\sigma, I;\nu}\zeta^{\la}( T_{w_{\nu}})
=\sum_{\nu\in\mc{OP}_n} f_{\nu}(T_{\sigma}C_I)\zeta^{\la}( T_{w_{\nu}}).
$$
Then by the invertibility of the character table
$(\zeta^{\la}(T_{w_{\nu}})) _{\la\in\mc{SP}_n,\nu\in\mc{OP}_n}$ we
can write
$$
f_{\nu}(T_{\sigma}C_I)=\sum_{\la\in\mc{SP}_n}g_{\la;\nu}\zeta^{\la}(T_{\sigma}C_I)
$$
for some $g_{\la;\nu}\in\K$. Therefore $f_{\nu}$ is a trace function
on $\HCK$ and hence a trace function on $\HCa$. Now \eqref{eq:fdual}
follows from the definition of $f_{\nu}$ and \eqref{eq:commutator}.
Then by Theorem~\ref{th:basistrace}, $\{f_{\nu}|\nu\in\mc{OP}_n\}$
forms a basis of the space of trace functions on $\HCa$.
\end{proof}
\section{Spin generic degrees for Hecke-Clifford algebra}
\label{sec:degree}

In this section, we shall introduce the spin generic degrees for the
Hecke-Clifford algebra and show that it  coincides with spin fake
degrees associated to the spin symmetric groups introduced in
\cite{WW1, WW3}.

\subsection{Basics on symmetric superalgebras}

Let $\mc{H}$ be an $R$-superalgebra which is free and of finite rank
over a commutative ring $R$ containing $\hf$. A trace function
$\phi: \mc{H}\rightarrow R$ is called a {\em symmetrizing trace
form} if the bilinear form
$$
\mc{H}\times\mc{H}\longrightarrow R,\quad  (h,h')\mapsto \phi(hh')
$$
is non-degenerate, i.e., there exists a homogeneous basis $\mc B$ of
$\mc H$ such that the determinant of matrix
$(\phi(b_1b_2))_{b_1,b_2\in\mc B}$ is a unit in $R$. In this case,
$(\mc{H}, \phi)$ or $\mc{H}$ is called a {\em symmetric
superalgebra}.

\begin{remark}\label{rem:matrixtrace}
Let $\mc{H}=M(V)$ or $\mc{H}=Q(V)$ over a field $\F$. Then every
trace function on $\mc{H}$ is a scalar multiple of the usual matrix
trace ${\rm tr}$. Note that $(\mc{H}, {\rm tr})$ is symmetric.
\end{remark}

In the remainder of this subsection, we assume that $\mc{H}$ is
symmetric with a symmetrizing trace form $\phi$, and describe some
basic results for $\mc H$. Though most are straightforward
superalgebra generalizations of the well-known classical results
(cf. \cite[Chapter~7]{GP2}), we need to make precise a possible
factor of $2$ due to type $\texttt Q$ simple $\mc H$-modules.

If $\mc{B}$ is a $\Z_2$-homogeneous basis for $\mc{H}$, we denote by
$\mc{B}^{\vee}=\{b^{\vee}|b\in\mc{B}\}$ the dual basis, which  is
also homogenous and satisfies that $\phi(b^{\vee}b')=\delta_{b,b'}$.
Suppose $V, V'$ are $\mc{H}$-modules. For any homogenous map
$f\in\Hom_{R}(V, V')$, we define $I(f)\in\Hom_{R}(V, V')$ by letting
$$
I(f)(v)=\sum_{b\in\mc{B}}(-1)^{|f||b|}b^{\vee}f(bv), \qquad \text{ for }v\in V.
$$
It follows by essentially the same proof as for \cite[Lemma
7.1.10]{GP2} with appropriate superalgebra signs inserted that
$I(f)$ is independent of the choice of the homogeneous basis
$\mc{B}$, and moreover $I(f)\in\Hom_{\mc{H}}(V,V')$.

Let $\F$ be a filed of characteristic not equal to 2.
From now on, we assume that $\mc H$ is a finite dimensional
superalgebra over a field $\F$ with a symmetrizing trace $\phi$. The
following lemma is the superalgebra analogue of
\cite[Theorem~7.2.1]{GP2}, which can be proved in the same way.

\begin{lemma}
  \label{lem:splitcV}
Let $V$ be a split irreducible $\mc H$-module. Then there exists a
unique element $c_V\in\F$ such that
$$
I(f)=c_V{\rm tr}(f)~{\rm id}_V, \qquad \text{ for } f\in\ev{\End_{\F}(V)}.
$$
\end{lemma}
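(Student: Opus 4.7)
\begin{demo}{Proof plan}
The plan is to reduce the statement to showing that a certain linear functional on $\ev{\End_\F(V)}$ is proportional to the ordinary trace. First, one checks directly from the formula that $I$ preserves $\Z_2$-parity, so combined with the preceding result $I(f)\in\End_{\mc H}(V)$, we have $I(f)\in\ev{\End_{\mc H}(V)}$ for $f$ even. Because $V$ is split irreducible, $\ev{\End_{\mc H}(V)}=\F\cdot\mathrm{id}_V$ in both cases (for type $\texttt{M}$, $\End_{\mc H}(V)=\F\cdot\mathrm{id}_V$ outright; for type $\texttt{Q}$, $\End_{\mc H}(V)$ is two-dimensional with an odd generator, leaving $\mathrm{id}_V$ as the only even element up to scalar). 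Hence we may write $I(f)=\lambda(f)\,\mathrm{id}_V$ for a unique linear functional $\lambda:\ev{\End_\F(V)}\to\F$, and the lemma reduces to proving $\lambda=c_V\cdot\mathrm{tr}$ for some scalar $c_V$. Uniqueness of $c_V$ is immediate since $\mathrm{tr}$ is not the zero functional.

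The key technical step is the identity
\[
 I(\rho(a)\circ f)=I(f\circ\rho(a)),\qquad a\in\mc H,\ f\in\End_\F(V),
\]
proved by invoking the tensor identity $\sum_b b^\vee a\otimes b=\sum_b b^\vee\otimes ab$ in $\mc H\otimes_\F\mc H$, which in turn is a direct consequence of the trace property $\phi(ab)=\phi(ba)$ and the dual basis relation $\phi(b^\vee b')=\delta_{b,b'}$. Feeding this identity into the bilinear map $(x,y)\mapsto\rho(x)\circ f\circ\rho(y)$ gives the claim. As an immediate consequence, $\lambda$ vanishes on every ordinary commutator $[\rho(a),g]=\rho(a)g-g\rho(a)$ whose two summands lie in $\ev{\End_\F(V)}$.

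For $V$ of type $\texttt{M}$, the density theorem for split irreducible supermodules gives $\rho(\mc H)=\End_\F(V)$, so $\lambda$ vanishes on the full space of even commutators $[\End_\F(V),\End_\F(V)]_{\bar 0}$. Since this coincides with $\ker(\mathrm{tr})\cap\ev{\End_\F(V)}$ (a standard fact for matrix algebras), $\lambda$ is necessarily a scalar multiple of $\mathrm{tr}$.

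For $V$ of type $\texttt{Q}$, $\rho(\mc H)$ fills out only $Q(V)\subsetneq\End_\F(V)$, namely the super-centralizer of the odd $\mc H$-linear involution $J$. Using $\mathrm{Ad}_J$ one decomposes $\ev{\End_\F(V)}=\ev{Q(V)}\oplus\ev{Q^{-}(V)}$ into its $\pm 1$-eigenspaces. On $\ev{Q(V)}\cong\mathfrak{gl}_m$, the type $\texttt{M}$ argument applies verbatim to yield $\lambda|_{\ev{Q(V)}}=c_V\,\mathrm{tr}|_{\ev{Q(V)}}$. On $\ev{Q^-(V)}$, where $Jf=-fJ$, a short manipulation using the super $\mc H$-linearity of $J$ (commuting $J$ past each $b^\vee$ in the sum defining $I(f)$) yields $J\circ I(f)=-I(f)\circ J$; combined with $I(f)=\lambda(f)\,\mathrm{id}_V$ (which commutes with $J$), the invertibility of $J$, and $\mathrm{char}\,\F\neq 2$, this forces $\lambda(f)=0$. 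The ordinary trace similarly vanishes on $\ev{Q^-(V)}$ via $\mathrm{tr}(f)=\mathrm{tr}(JfJ^{-1})=-\mathrm{tr}(f)$. Assembling both eigenspaces gives $\lambda=c_V\cdot\mathrm{tr}$ on all of $\ev{\End_\F(V)}$. The main obstacle is precisely this type $\texttt{Q}$ case: because $\rho(\mc H)$ fails to exhaust $\End_\F(V)$, one must exploit the extra $\mc H$-linear structure carried by $J$ to kill the complementary eigenspace $\ev{Q^-(V)}$.
\end{demo}
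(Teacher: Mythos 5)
Your argument is correct, but it follows a genuinely different route from the paper's. The paper proves Lemma~\ref{lem:splitcV} by declaring it "the superalgebra analogue of [GP2, Theorem~7.2.1], which can be proved in the same way": there one evaluates $I$ on matrix units $e_{st}$ of $\End_\F(V)$, uses the super Schur lemma to write $I(e_{st})=\lambda_{st}\,{\rm id}_V$, and then exploits the basis-independence of $I$ together with the symmetry of $\phi$ (so that $\{b\}$ is the basis dual to $\{b^\vee\}$) to compare the two expansions $\sum_b\rho(b^\vee)e_{st}\rho(b)$ and $\sum_b\rho(b)e_{st}\rho(b^\vee)$ entrywise; this forces $\lambda_{st}=c_V\delta_{st}$ at once, uniformly for types $\texttt M$ and $\texttt Q$, with no structure theory beyond the graded Schur lemma. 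You instead reduce to a linear functional $\lambda$ on $\ev{\End_\F(V)}$, kill it on even commutators via the Casimir-type identity $\sum_b b^\vee a\otimes b=\sum_b b^\vee\otimes ab$, and then invoke the graded density theorem to identify $\rho(\mc H)$ with $\End_\F(V)$ (type $\texttt M$) or with $Q(V)$ (type $\texttt Q$), finishing with the span-of-commutators computation and the $J$-conjugation trick $J\circ I(f)=-I(f)\circ J$ to annihilate the $(-1)$-eigenspace of ${\rm Ad}_J$. Your route has the merit of explaining conceptually why the ordinary (non-super) trace appears and why no extra factor of $2$ enters at this stage, but it buys this at the cost of the super density theorem and a type-by-type case analysis that the matrix-unit argument avoids. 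Two small caveats, neither fatal: (i) your key identity as stated for all $a\in\mc H$ is off by a sign in the mixed-parity case --- one actually has $I(f\circ\rho(a))=(-1)^{|a|(|a|+|f|)}I(\rho(a)\circ f)$ --- but in the only cases you use, namely $|a|=|f|$ so that both products are even, the sign is $+1$ and the vanishing of $\lambda$ on even commutators stands; (ii) the tensor identity follows from associativity of the form $(x,y)\mapsto\phi(xy)$ and the dual-basis relation alone, the trace property of $\phi$ being needed only where one swaps the roles of $\{b\}$ and $\{b^\vee\}$ (as in the paper's GP2-style argument). Also note in the type $\texttt M$ step that the even part of the full commutator space is spanned by commutators of two even or two odd elements, both of which your identity covers, so the claimed equality with $\ker({\rm tr})\cap\ev{\End_\F(V)}$ is indeed available to you.
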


The element $c_V$ is called the {\em Schur element} of $V$.
Let us compute the Schur element of the unique irreducible representation of
the simple superalgebras over $\F$.

\begin{example}\label{ex:example}
(1) Let $\mc H=Q(V)$ with $V=\F^{m|m}$.
Clearly $V$ is an irreducible
$\mc H$-module of type $\texttt Q$.
Let $v_1,\ldots, v_m$ be a basis
of $\ev V$ and $v_{-1},\ldots, v_{-m}$ be a basis of $\od V$,
and let $J\in\End_{\F}(V)$ be the automorphism sending $v_k$ to
$v_{-k}$ for $1\leq k\leq m$. Then $\mc H$ consists of $2m\times 2m$ matrices
of the form:
\begin{equation*}
\begin{pmatrix}
a&b\\
-b&a\\
\end{pmatrix},
\end{equation*}
where $a$ and $b$ are arbitrary $m\times m$ matrices.
Observe that
$\mc B=\{g_{ij}:=E_{i,j}+E_{-i,-j}|1\leq i,j\leq
m\}\cup\{h_{ij}:=E_{-i,j}-E_{i,-j}|1\leq i,j\leq m\}$ is a
basis of $\mc H$ and the dual basis with respect to the usual
matrix trace ${\rm tr}$ is $\mc B^{\vee}=\{g_{ij}^{\vee}=\frac
12g_{ji}|1\leq i,j\leq
m\}\cup\{h_{ij}^{\vee}=-\frac{1}{2}h_{ji}|1\leq i,j\leq m\}$.
Then a direct computation shows that, for $f\in\ev{\End_{\F}(V)}$,
$$
I(f)(v_k)=\frac{{\rm tr}(f)}{2}v_k, \qquad   \text{ for } k\in I(m|m).
$$
By Lemma~\ref{lem:splitcV},  the Schur element of the irreducible $\mc H$-module $V$
(with respect to the usual matrix trace) equals $\frac 12$.

(2) Let $\mc H=M(V)$ with $V=\F^{r|m}$.
Observe that $V$ is naturally an irreducible
$\mc H$-module of type $\texttt M$. A similar (and somewhat simpler)
calculation as in (1) shows  that  the Schur element of $V$ (with
respect to the usual matrix trace) equals 1.
%
\end{example}

We denote by ${\rm Irr}(\mc H)$ the complete set of non-isomorphic irreducible $\mc H$-modules.
Let $\chi_V$ denote the character of an irreducible $\mc H$-module $V$, and write
$$
\delta(V)=\left\{
\begin{array}{ll}
0,&\text{ if }V \text{ is of type }\texttt M,\\
1,&\text{ if }V \text{ is of type }\texttt Q.
\end{array}
\right.
$$

\begin{proposition}\label{prop:Schurelem}
Suppose that $\mc H$ is a split semisimple superalgebra over $\mathbb F$.
Then the Schur element $c_V$ for every irreducible $\mc H$-module $V$ is nonzero.
Moreover,
$$
\phi=\sum_{V\in{\rm Irr}(\mc H)}\frac{1}{2^{\delta(V)}c_V}\chi_V.
$$
\end{proposition}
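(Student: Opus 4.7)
The plan is to decompose the problem across the simple blocks of $\mc H$. Since $\mc H$ is split semisimple, it admits a Wedderburn-type decomposition
\[
\mc H = \bigoplus_{V \in {\rm Irr}(\mc H)} \mc H_V,
\]
where each $\mc H_V$ is a simple superalgebra of the form $M(V)$ or $Q(V)$, acting as zero on every $V' \ne V$. The character $\chi_V$ coincides with the usual matrix trace on $\mc H_V$ and vanishes on all other blocks.

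First I would write $\phi$ in the basis $\{\chi_V\}$. The restriction $\phi|_{\mc H_V}$ is a trace function on the simple superalgebra $\mc H_V$, so by Remark~\ref{rem:matrixtrace} it equals $\alpha_V\,\chi_V|_{\mc H_V}$ for some scalar $\alpha_V \in \mathbb F$. Since different blocks annihilate one another, extension by zero across the complementary blocks gives $\phi = \sum_V \alpha_V \chi_V$. Next, to obtain $c_V \ne 0$, I would observe that the bilinear form $(h,h') \mapsto \phi(hh')$ is block-diagonal: $\mc H_V \cdot \mc H_{V'} = 0$ for $V \ne V'$. Non-degeneracy of $\phi$ therefore forces non-degeneracy of each $\phi|_{\mc H_V}$, and since the matrix trace on $M(V)$ or $Q(V)$ is already non-degenerate, we must have $\alpha_V \ne 0$.

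Finally I would determine the relation between $\alpha_V$ and $c_V$. Pick a homogeneous basis $\mc B_V$ of $\mc H_V$; its dual basis with respect to $\phi$ is obtained from the dual basis with respect to the matrix trace by dividing by $\alpha_V$. Consequently, for $f \in \ev{\End_\F(V)}$, the operator $I_\phi(f)$ defined using $\phi$ equals $\alpha_V^{-1}$ times the operator $I_{\rm tr}(f)$ defined using the matrix trace; and since $\mc H_{V'}$ annihilates $V$ for $V' \ne V$, only elements of the block $\mc H_V$ contribute to $I_\phi(f)$ on $V$. Combining this with Example~\ref{ex:example}, which gives $I_{\rm tr}(f) = 2^{-\delta(V)} {\rm tr}(f)\,{\rm id}_V$, Lemma~\ref{lem:splitcV} yields
\[
c_V \;=\; \alpha_V^{-1}\, 2^{-\delta(V)}, \qquad \text{i.e.,} \qquad \alpha_V \;=\; \frac{1}{2^{\delta(V)} c_V},
\]
and substituting back into $\phi = \sum_V \alpha_V \chi_V$ produces the desired formula. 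The one point that requires care is the bookkeeping described above: tracking how the rescaling $\phi|_{\mc H_V} = \alpha_V\,{\rm tr}$ propagates through the dual basis and into the type-$\texttt{Q}$ factor of $\tfrac12$ coming from Example~\ref{ex:example}(1); beyond this, each step is formal.
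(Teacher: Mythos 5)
Your argument is correct and follows essentially the same route as the paper's proof: block decomposition into simple superalgebras, Remark~\ref{rem:matrixtrace} to write $\phi=\sum_V \alpha_V\chi_V$ with $\alpha_V\neq0$ by non-degeneracy, and then the rescaled dual basis together with Example~\ref{ex:example} and Lemma~\ref{lem:splitcV} to get $c_V=2^{-\delta(V)}\alpha_V^{-1}$. Your reformulation $I_\phi(f)=\alpha_V^{-1}I_{\mathrm{tr}}(f)$ is just a compact way of writing the paper's explicit dual-basis computation, so nothing further is needed.
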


\begin{proof}
Write $\mc H$ as a direct sum of simple superalgebras:
\begin{align}\label{eq:Wedderburn1}
\mc H=\bigoplus_{V\in{\rm Irr}(\mc H)}H(V).
\end{align}
Then the irreducible characters $\chi_V$ can be identified with the
usual matrix trace on $H(V)$. By Remark~\ref{rem:matrixtrace}, the
restriction of the trace form $\phi$ to $H(V)$ is a scalar multiple
of the irreducible character $\chi_V$ for each $V\in{\rm Irr}(\mc
H)$, i.e.,
$$
\phi=\sum_{V\in{\rm Irr}(\mc H)}d_V\cdot \chi_V
$$
for some scalar  $d_V\in\F$, which must be nonzero thanks to the
non-degeneracy of $\phi$. Let $\mc B=\cup_{V\in{\rm Irr}\mc H}\mc
B(V)$ be a homogeneous basis of $\mc H$ which is compatible with the
decomposition \eqref{eq:Wedderburn1} and let $\widetilde{\mc B}(V)$
be the basis in $\mc H(V)$ dual to $\mc B(V)$ with respect to the
trace function $\chi_V$ on $H(V)$. Then $\cup_{V\in{\rm Irr}(\mc
H)}\{d_V^{-1}b |b\in\widetilde{\mc B}(V)\}$ is the basis dual to
$\mc B$ with respect to the trace form $\phi$. Now fix an
irreducible $\mc H$-module $V$. For $f\in\ev{\End_{\F}(V)}$ and
$v\in V$, we have
\begin{align*}
c_V{\rm tr}(f) v =& I(f)(v)=\sum_{b\in\mc B}b^{\vee}f(bv) \\
=&\sum_{V'\in{\rm Irr}\mc H}\sum_{b\in\mc B(V')}b^{\vee}f(bv)
= \sum_{b\in\mc B(V)}b^{\vee}f(bv)\\
=&\frac{1}{d_V}\sum_{b\in\mc B(V)}\widetilde{b}f(bv)
= \frac{1}{d_V}\frac{1}{2^{\delta(V)}}{\rm tr}(f)(v),
\end{align*}
where the fourth equality is due to $bv=0$ for $b\in\mc B(V')$ with
$V'\neq V$ and the last equality follows from
Example~\ref{ex:example} and the fact that the summation on the
right hand side is the defining formula for the Schur element of $V$
with respect to the usual matrix trace $\chi_V$ on $H(V)$. Therefore
$ c_V=\frac{1}{2^{\delta(V)}d_V}, $ and the proposition follows.
\end{proof}

\begin{remark}
As in \cite[Corollary~7.2.4]{GP2}, the following orthogonality
relation between split simple characters $\chi_V$ and $\chi_{V'}$
holds for a symmetric superalgebra $\mc H$:
$$
\sum_{b\in\mc B} \chi_V(b)\chi_{V'}(b^{\vee})
=\left\{
\begin{array}{ll}
2^{\delta(V)}   c_V{\rm dim}V,&\text{  if  }\chi_V=\chi_{V'}, \\
0,&\text{otherwise}.
\end{array}
\right.
$$
\end{remark}

\subsection{The symmetrizing trace form $\gimel$ and Schur elements}

Define a trace function $\gimel: \HCa\rightarrow {\bf A}$ which is
characterized by the conditions
\begin{align*}
\gimel(T_{w_{\nu}})&= \Big(\frac{v-1}{2}\Big)^{n-\ell(\nu)},
 \qquad \text{ for }\nu\in\mc{OP}_n, \\
\gimel(z)&=0, \qquad \text{ for }z\in\od{(\HCa)}.
\end{align*}
By Theorem~\ref{th:basistrace} and Corollary~\ref{cor:spacetrace},
$\gimel$ is well-defined and unique. We still denote by $\gimel$ the
corresponding trace function on $\HCK$ by a base change. We shall
compute the Schur elements for $\HCK$ with respect to $\gimel$. We
first prepare some notations.

Given a partition $\la$, suppose that the main diagonal of the Young
diagram $\la$ contains $r$ cells. Let $\alpha_i=\la_i-i$ be the
number of cells in the $i$th row of $\la$ strictly to the right of
$(i,i)$, and let $\beta_i=\la_i'-i$ be the number of cells in the
$i$th column of $\la$ strictly below $(i,i)$, for $1\leq i\leq r$.
We have $\alpha_1>\alpha_2>\cdots>\alpha_r\geq0$ and
$\beta_1>\beta_2>\cdots>\beta_r\geq0$. Then the Frobenius notation
for a partition is
$\la=(\alpha_1,\ldots,\alpha_r|\beta_1,\ldots,\beta_r)$. For
example, if $\la=(5,4,3,1)$ whose corresponding Young diagram is
$$
\la =\young(\,\,\,\,\,,\,\,\,\,,\,\,\,,\,)
$$
then $\alpha =(4,2,0),
\beta=(3,1,0)$ and hence $\la=(4,2,0|3,1,0)$ in Frobenius
notation.

Suppose that $\la$ is a strict partition of $n$. Let $\la^*$ be the
associated  {\em  shifted diagram}, that is,
$$
\la^*=\{(i,j)~|~1\leq i\leq l(\la), i\leq j\leq\la_i+i-1
\}
$$
which is obtained from the ordinary Young diagram by shifting the
$k$th row to the right by $k-1$ squares, for each $k$. Denoting
$\ell(\la)=\ell$, we define the {\em double partition}
$\widetilde{\la}$ to be $\widetilde{\la}=(\la_1,\ldots,\la_\ell|
\la_1-1,\la_2-1,\ldots,\la_\ell-1)$ in Frobenius notation. Clearly,
the shifted diagram $\la^*$ coincides with the part of
$\widetilde{\la}$ that lies strictly above the main diagonal. For each cell
$(i,j)\in \la^*$, denote by $h^*_{ij}$ the associated hook length in
the Young diagram $\widetilde{\la}$, and set the content
$c_{ij}=j-i$.

\begin{example}
Let $\la= (4, 3, 1)$. The corresponding shifted diagram $\la^*$ and
double diagram $\widetilde\la$ are
$$
\la^*=\young(\,\,\,\,,:\,\,\,,::\,)
\qquad \qquad
\widetilde{\la}=\young(\,\,\,\,\,,\,\,\,\,\,,\,\,\,\,,\,\,)
$$
The contents of $\la$ are listed in the corresponding cell of $\la^*$ as follows:
$$
\young(0123,:012,::0)
$$
The shifted hook lengths for each cell in $\la^*$ are  the
usual hook lengths for the corresponding cell in $\la^*$, as
part of the double diagram $\widetilde \la$, as follows:
$$
\young(\,7542,\,\,431,\,\,\,1,\,\,)
\qquad \qquad \young(7542,:431,::1)
$$
\end{example}

For  $\la\in\mc{SP}_n$, let
$Q_{\la}(v^{\bullet}):=Q_{\la}(1,v,v^2,\ldots)$ be the principal
specialization of Schur $Q$-function $Q_\la$ at $v^{\bullet}=
(1,v,v^2,\ldots).$ The following formula for $Q_{\la}(v^{\bullet})$
appeared as \cite[Theorem B]{WW1} (also see \cite{Ro} for a
different form).

\begin{proposition}
  \label{prop:SchurQ}
Suppose $\la\in\mc{SP}_n$. Then
$$
Q_\la(v^\bullet) =\frac{v^{n(\la)}\prod_{\Box\in
\la^*}(1+v^{c_{\Box}})}{\prod_{\Box\in \la^*}(1-v^{h^*_{\Box}})}.
$$
\end{proposition}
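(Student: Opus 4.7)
The plan is to start from a Weyl-type symmetrization formula for the Schur $Q$-function and then perform the principal specialization $x_i\mapsto v^{i-1}$. Concretely, I would invoke Macdonald's identity (see \cite[III.8]{Mac}): for $n\ge\ell(\la)$ and with $S_n^\la$ the stabilizer of the composition $\la$,
$$
Q_\la(x_1,\ldots,x_n)
 = \sum_{w\in S_n/S_n^\la} w\left(x^\la \prod_{\substack{1\le i<j\le n\\ \la_i>\la_j}}\frac{x_i+x_j}{x_i-x_j}\right).
$$
After substituting $x_i=v^{i-1}$ and letting $n\to\infty$, the identity coset produces the monomial $v^{\sum_i(i-1)\la_i}=v^{n(\la)}$ together with the Weyl-type factor $\prod_{i<j}(v^{i-1}+v^{j-1})/(v^{i-1}-v^{j-1})$. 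The remaining cosets should combine, via the standard residue/cancellation argument used in principal specializations of Hall--Littlewood-type polynomials, into a single rational function in $v$.

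The next step is to reindex that rational function by cells of the shifted diagram $\la^*$. The basic observation is that the shifted hook length $h^*_{ij}$ at $(i,j)\in\la^*$ equals the ordinary hook length of $(i,j)$ in the double diagram $\widetilde\la$, while the content $c_{ij}=j-i$ appears directly in the exponents $v^{j-i}$ arising from the specialization. Under this bijection each factor $(1+v^{j-i})$ matches a numerator term $(1+v^{c_\Box})$ and each denominator factor $(1-v^{h^*_{ij}})$ matches a denominator term, giving the stated product.

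The main obstacle is the combinatorial bookkeeping in the reindexing step: matching the factors coming from the symmetrized sum with cells of $\la^*$ in a way compatible with the shifted hook lengths and contents is nontrivial because the two sides of the identity are products over different index sets. The cleanest route, as in \cite[Theorem~B]{WW1}, is induction on $\ell(\la)$: one peels off the shortest row $\la_\ell$ using the branching rule for $Q_\la$, checks that the ratio of hook--content products for $\la$ and $\la\setminus\{\la_\ell\}$ equals the expected Pieri-like factor from $Q_\la$, and invokes the inductive hypothesis. Rosengren's alternative approach \cite{Ro}, based on a direct $q$-binomial expansion rather than symmetrization, would serve as a useful cross-check.
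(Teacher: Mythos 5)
You should first note the frame of reference: the paper does not prove Proposition~\ref{prop:SchurQ} at all --- it quotes the formula from \cite[Theorem B]{WW1} (with \cite{Ro} cited for a different form) --- so the comparison is with that external source, and simply invoking it, as the paper does, would be acceptable. Judged as a self-contained argument, however, your sketch has genuine gaps. First, the identity you start from is the coset symmetrization formula for the Hall--Littlewood polynomial $P_\la$ specialized at $t=-1$, not for $Q_\la$: for strict $\la$ one has $Q_\la=2^{\ell(\la)}P_\la$, and this factor is not cosmetic, since in the target formula it is produced exactly by the $\ell(\la)$ diagonal cells of $\la^*$, which have content $0$ and each contribute $1+v^0=2$; if you run your computation literally you land off by $2^{\ell(\la)}$. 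Second, the two steps that constitute the actual content of the proposition are only asserted, not carried out: the evaluation of the specialized symmetrized sum (``the remaining cosets should combine, via the standard residue/cancellation argument'') and the re-indexing of the resulting parts-indexed product by cells of $\la^*$. Concretely, what is needed is the parts form
$$
Q_\la(v^\bullet)=v^{n(\la)}\prod_{i=1}^{\ell(\la)}\frac{\prod_{c=0}^{\la_i-1}(1+v^{c})}{\prod_{k=1}^{\la_i}(1-v^{k})}\prod_{1\le i<j\le\ell(\la)}\frac{1-v^{\la_i-\la_j}}{1-v^{\la_i+\la_j}},
$$
together with the combinatorial fact that the multiset of shifted hook lengths in row $i$ of $\la^*$ is $\{1,\ldots,\la_i\}\cup\{\la_i+\la_j\mid j>i\}\setminus\{\la_i-\la_j\mid j>i\}$ while the contents in row $i$ are $0,1,\ldots,\la_i-1$; neither statement appears in your sketch, and they are precisely where the work lies.

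Your proposed repair --- induction on $\ell(\la)$, ``peeling off the shortest row using the branching rule for $Q_\la$'' --- does not work as described: the Pieri and branching rules for Schur $Q$-functions are multi-term, so they do not yield a one-term recursion expressing $Q_\la(v^\bullet)$ as $Q_{(\la_1,\ldots,\la_{\ell-1})}(v^\bullet)$ times a closed factor. Such a recursion is immediate once the parts form displayed above is known, but establishing that form is exactly the outstanding task; routes that do close the argument include Schur's Pfaffian formula $Q_\la=\operatorname{Pf}\bigl(Q_{(\la_i,\la_j)}\bigr)$ combined with the two-row evaluation, a careful finite-$n$ principal specialization of the symmetrization formula followed by $n\to\infty$, or simply citing \cite[Theorem B]{WW1} or \cite{Ro}. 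A minor further imprecision: $h^*_{ij}$ for $(i,j)\in\la^*$ is the hook length of the corresponding cell of $\widetilde\la$ under the identification of $\la^*$ with the part of $\widetilde\la$ strictly above the main diagonal (a shift by one column), not the hook length at position $(i,j)$ of $\widetilde\la$ itself; with your literal reading the formula already fails for $\la=(1)$.
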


Now we compute the Schur elements for simple $\HCK$-modules.

\begin{theorem} \label{th:spinSchur}
$\gimel$ is a symmetrizing trace form on $\HCK$. For
$\la\in\mc{SP}_n$, the Schur element $c^{\la}$ of the simple
$\HCK$-module $U^{\la}$ with respect to $\gimel$ is given by
\begin{equation}\label{eq:elemSchur}
c^{\la}=2^{n+\frac{\ell(\la)-\delta(\la)}{2}} \frac{\prod_{\Box\in
\la^*}(1-v^{h^*_{\Box}})}{v^{n(\la)}(1-v)^n\prod_{\Box\in
\la^*}(1+v^{c_{\Box}})}.
\end{equation}
\end{theorem}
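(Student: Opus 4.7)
The plan is to prove both claims in Theorem~\ref{th:spinSchur} simultaneously by evaluating the Frobenius character formula (Theorem~\ref{thm:Frobenius}) at the principal specialization $x=v^{\bullet}:=(1,v,v^{2},\ldots)$ and matching the result against the defining values of $\gimel$ on the standard elements $T_{w_{\nu}}$, $\nu\in\mc{OP}_{n}$. First I would compute the principal specialization of the generating function \eqref{eq:generating}: after substituting $x_{i}=v^{i-1}$ the product telescopes to $(1-t)/(1+t)$, giving $g_{n}(v^{\bullet};v)=2(-1)^{n}$ for $n\geq 1$, and hence
$$\widetilde{g}_{\mu}(v^{\bullet};v)=\prod_{i=1}^{\ell(\mu)}\widetilde{g}_{\mu_{i}}(v^{\bullet};v)=\frac{2^{\ell(\mu)}}{(1-v)^{\ell(\mu)}}\qquad\text{for all }\mu\in\mc{OP}_{n}.$$

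Next, setting $x=v^{\bullet}$ in \eqref{eq:Frobenius} and invoking Proposition~\ref{prop:SchurQ}, the Frobenius formula for $\mu\in\mc{OP}_{n}$ reads
$$\frac{2^{\ell(\mu)}}{(1-v)^{\ell(\mu)}}=\sum_{\la\in\mc{SP}_{n}}2^{-(\ell(\la)+\delta(\la))/2}\,Q_{\la}(v^{\bullet})\,\zeta^{\la}(T_{w_{\mu}}).$$
Multiplying through by $(1-v)^{n}/2^{n}$ and using that $n-\ell(\mu)$ is even (since every part of $\mu$ is odd), the left side becomes $((v-1)/2)^{n-\ell(\mu)}=\gimel(T_{w_{\mu}})$. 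Introducing $a_{\la}:=(1-v)^{n}Q_{\la}(v^{\bullet})/2^{n+(\ell(\la)+\delta(\la))/2}$, the identity rewrites as
$$\gimel(T_{w_{\mu}})=\sum_{\la\in\mc{SP}_{n}}a_{\la}\,\zeta^{\la}(T_{w_{\mu}})\qquad(\mu\in\mc{OP}_{n}).$$
Both sides are trace functions on $\HCK$, so Corollary~\ref{cor:spacetrace} promotes this pointwise equality to the global identity $\gimel=\sum_{\la}a_{\la}\zeta^{\la}$.

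Finally, the closed formula for $a_{\la}$, together with Proposition~\ref{prop:SchurQ}, makes each $a_{\la}$ manifestly a nonzero rational function in $v$, so the bilinear form $(h,h')\mapsto\gimel(hh')$ is nondegenerate on every Wedderburn block of $\HCK$, hence on $\HCK$ itself; this establishes that $\gimel$ is a symmetrizing trace form. Proposition~\ref{prop:Schurelem} then identifies $a_{\la}=1/(2^{\delta(\la)}c^{\la})$, and solving for $c^{\la}$ and re-substituting $Q_{\la}(v^{\bullet})$ yields the desired formula \eqref{eq:elemSchur}. The main obstacle is the careful bookkeeping of powers of $2$, signs, and the normalization $(v-1)^{\ell(\mu)-1}$ that relates $g_{\mu}$ and $\widetilde{g}_{\mu}$; once this arithmetic is in order, both assertions follow cleanly from the Frobenius formula and the uniqueness-of-trace-functions principle.
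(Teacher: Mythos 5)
Your proposal is correct and follows essentially the same route as the paper's own proof: specialize the Frobenius formula at $x=v^{\bullet}$, compare with the defining values $\gimel(T_{w_{\nu}})=\bigl(\tfrac{v-1}{2}\bigr)^{n-\ell(\nu)}$, use Corollary~\ref{cor:spacetrace} to upgrade to the identity $\gimel=\sum_{\la}a_{\la}\zeta^{\la}$, and then invoke Propositions~\ref{prop:SchurQ} and \ref{prop:Schurelem} to extract $c^{\la}$. Your explicit block-by-block nondegeneracy argument is a slight elaboration of a step the paper leaves implicit, but it is not a different method.
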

\begin{proof}
Set $u_{\la}$ to be the inverse of the right hand side of~\eqref{eq:elemSchur}.
Recall from Proposition~\ref{prop:JN} that $\HCK$ is semisimple. By
Proposition~\ref{prop:Schurelem},  in order to establish the
theorem, it suffice to show that
\begin{equation}  \label{eq:gimelzeta}
\gimel=\sum_{\la\in\mc{SP}_n}u_{\la}\zeta^{\la}.
\end{equation}

Recall the function $\widetilde{g}_{r}(x;v)$ from \eqref{eq:onecharacter}.
Specializing  \eqref{eq:generating} at  $x=v^{\bullet}$, we obtain that
$$
 \sum_{n \ge 0}
\tilde{g}_n(v^\bullet; v) t^n =\frac1{v-1} \cdot \frac{1-t}{1+t}
=\frac1{v-1} \big(1+\sum_{n \ge 1} 2 (-1)^n t^n \big).
$$
Hence we have
\begin{equation*}
\tilde{g}_n(v^\bullet; v)  =\frac{2 (-1)^n}{v-1}, \qquad n \ge 1,
\end{equation*}
and
\begin{equation}  \label{eq:special g}
\tilde{g}_\mu(v^\bullet; v) =\frac{2^{\ell(\mu)}
(-1)^n}{(v-1)^{\ell(\mu)}}, \text{ for }\mu\in\mc{P}_n.
\end{equation}
By the Frobenius formula in Theorem~\ref{thm:Frobenius} and the
definition of $\gimel$, we obtain that
\begin{align}  \label{eq:Qform}
\sum_{\la \in\mc{SP}_n}  2^{-\frac{\ell(\la)
+\delta(\la)}2}Q_\la(v^\bullet) \zeta^\la(T_{w_\nu})
=&\frac{2^{\ell(\nu)}(-1)^n}{(v-1)^{\ell(\nu)}}
=\frac{2^n}{(1-v)^n}\gimel(T_{w_{\nu}})
\end{align}
for all $\nu\in\mc{OP}_n$. Then by Corollary~\ref{cor:spacetrace},
one deduces that
$$
\gimel=\sum_{\la \in\mc{SP}_n}  2^{-n-\frac{\ell(\la) +\delta(\la)}2}(1-v)^nQ_\la(v^\bullet)
\zeta^\la.
$$
Now \eqref{eq:gimelzeta} follows from this identity and
Proposition~\ref{prop:SchurQ}. The theorem is proved.
\end{proof}
It follows from the definition of $\gimel$ that $\gimel(T_{w_{\nu}})=
\Big(\frac{v-1}{2}\Big)^{n-\ell(\nu)}$ for odd partition $\nu$ of
$n$. The following states that the formula actually
hold for all partitions of $n$.
\begin{corollary}
For all $\mu\in\mc P_n$, we have:
$$
\gimel(T_{w_{\mu}})=
\Big(\frac{v-1}{2}\Big)^{n-\ell(\mu)}.
$$
\end{corollary}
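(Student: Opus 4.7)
The plan is to observe that all the ingredients have already been assembled in the proof of Theorem~\ref{th:spinSchur}, and the claimed formula follows by applying the explicit expression for $\gimel$ to $T_{w_\mu}$ for arbitrary partitions $\mu$ (not merely odd ones). More precisely, from the proof of Theorem~\ref{th:spinSchur} we have the identity
\begin{equation*}
\gimel = \sum_{\la \in \mc{SP}_n} 2^{-n-\frac{\ell(\la)+\delta(\la)}{2}} (1-v)^n \, Q_\la(v^\bullet)\, \zeta^\la,
\end{equation*}
which was derived from the characterization of trace functions on $\HCa$ by their values on $T_{w_\nu}$ with $\nu \in \mc{OP}_n$, and hence is a genuine identity of trace functions on all of $\HCK$.

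The key observation is that the Frobenius type formula of Theorem~\ref{thm:Frobenius} holds for every partition $\mu$ of $n$, not only odd ones. Evaluating that formula at the principal specialization $x = v^\bullet = (1, v, v^2, \ldots)$ gives
\begin{equation*}
\widetilde{g}_\mu(v^\bullet; v) = \sum_{\la \in \mc{SP}_n} 2^{-\frac{\ell(\la)+\delta(\la)}{2}} Q_\la(v^\bullet)\, \zeta^\la(T_{w_\mu}).
\end{equation*}
Combining the last two displays, I obtain
\begin{equation*}
\gimel(T_{w_\mu}) = 2^{-n}(1-v)^n \, \widetilde{g}_\mu(v^\bullet; v).
\end{equation*}

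Finally, I will invoke the specialization computed earlier in the proof of Theorem~\ref{th:spinSchur}, namely
\begin{equation*}
\widetilde{g}_\mu(v^\bullet; v) = \frac{2^{\ell(\mu)} (-1)^n}{(v-1)^{\ell(\mu)}} \qquad \text{for } \mu \in \mc{P}_n,
\end{equation*}
which was obtained directly from the defining generating function \eqref{eq:generating}--\eqref{eq:onecharacter} and hence is valid for \emph{any} partition $\mu$ of $n$. Plugging this in and using $(1-v)^n = (-1)^n (v-1)^n$ yields
\begin{equation*}
\gimel(T_{w_\mu}) = 2^{-n}(1-v)^n \cdot \frac{2^{\ell(\mu)}(-1)^n}{(v-1)^{\ell(\mu)}} = \frac{(v-1)^{n-\ell(\mu)}}{2^{n-\ell(\mu)}} = \left(\frac{v-1}{2}\right)^{n-\ell(\mu)},
\end{equation*}
as desired. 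There is no serious obstacle here: the entire content was already present in the proof of Theorem~\ref{th:spinSchur}, where the computation was restricted to $\nu \in \mc{OP}_n$ only because that was what was needed to characterize $\gimel$; the corollary simply records that the same calculation, now interpreted as an evaluation of the already-known $\gimel$, goes through uniformly for all $\mu \in \mc{P}_n$.
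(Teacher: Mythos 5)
Your proposal is correct and follows essentially the same route as the paper's own proof: the paper likewise combines the identity \eqref{eq:gimelzeta} for $\gimel$ (from Theorem~\ref{th:spinSchur}) with the Frobenius formula of Theorem~\ref{thm:Frobenius} evaluated at $x=v^\bullet$ and the specialization \eqref{eq:special g}, valid for all $\mu\in\mc P_n$. No gaps; the computation matches the paper's line for line.
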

\begin{proof}
Let $\mu\in\mc P_n$.
By Theorem~\ref{th:spinSchur} (or equivalently,
\eqref{eq:gimelzeta}), we obtain
\begin{align*}
\gimel(T_{w_{\mu}})
&=\sum_{\la \in\mc{SP}_n}  2^{-n-\frac{\ell(\la) +\delta(\la)}2}(1-v)^nQ_\la(v^\bullet)
\zeta^\la(T_{w_{\mu}})\\
&=\big(\frac{1-v}{2}\big)^n
\sum_{\la \in\mc{SP}_n}  2^{-\frac{\ell(\la) +\delta(\la)}2}Q_\la(v^\bullet)
\zeta^\la(T_{w_{\mu}})\\
&=\big(\frac{1-v}{2}\big)^n\tilde{g}_\mu(v^\bullet; v)\\
&=\Big(\frac{v-1}{2}\Big)^{n-\ell(\mu)},
\end{align*}
where the last two equalities are due to Theorem~\ref{thm:Frobenius}
and \eqref{eq:special g}, respectively.
This proves the corollary.
\end{proof}
\subsection{The generic degrees for $\HCK$}

Denote by $P_{n}=\sum_{\sigma\in S_n}v^{\ell(\sigma)}$ the
Poincar{\'e} polynomial of the symmetric group $S_n$, and we can
formally regard $2^n P_n$ as the Poincar{\'e} polynomial of  $\mc
H^c_n$. It is known that the  Poincar{\'e} polynomial $P_{n}$ is
given by
$$
P_n=\frac{(1-v)(1-v^2)\cdots (1-v^n)}{(1-v)^n}.
$$
Define the {\em spin generic degree} $D^{\la} =D^\la (v)$ associated
to the irreducible $\HCK$-module $U^{\la}$, for $\la \in \mc{SP}_n$,
to be
$$
D^{\la}=\frac{2^nP_n}{c^{\la}}.
$$
The following is a reformulation of Theorem~\ref{th:spinSchur} by
definition of spin generic degrees.

\begin{theorem}   \label{th:genDeg}
The following formula for the spin generic degrees holds: for
$\la\in\mc{SP}_n$,
$$
D^{\la}=2^{-\frac{\ell(\la)-\delta(\la)}{2}}
\frac{v^{n(\la)}(1-v)1-v^2)\cdots(1-v^n)\prod_{\Box\in
\la^*}(1+v^{c_{\Box}})}{\prod_{\Box\in \la^*}(1-v^{h^*_{\Box}})}.
$$
\end{theorem}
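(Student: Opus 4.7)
The plan is to directly substitute the explicit Schur element formula of Theorem~\ref{th:spinSchur} into the definition $D^\la = 2^n P_n/c^\la$ and simplify. Since the theorem statement explicitly advertises itself as a reformulation by definition, no new ingredients are needed beyond a careful cancellation.

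First, I would record the Poincar\'e polynomial in product form:
$$
2^n P_n = \frac{2^n (1-v)(1-v^2)\cdots (1-v^n)}{(1-v)^n}.
$$
Then I would invert the Schur element
$$
c^\la = 2^{n+\frac{\ell(\la)-\delta(\la)}{2}}\cdot \frac{\prod_{\Box\in\la^*}(1-v^{h^*_\Box})}{v^{n(\la)}(1-v)^n\prod_{\Box\in\la^*}(1+v^{c_\Box})}
$$
and multiply by $2^n P_n$. The factor $(1-v)^n$ in the denominator of $P_n$ cancels against the factor $(1-v)^n$ appearing in the numerator of $1/c^\la$, while $2^n$ in the numerator cancels against $2^n$ in $c^\la$, leaving the prefactor $2^{-\frac{\ell(\la)-\delta(\la)}{2}}$. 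The remaining factors assemble into the claimed shifted hook expression
$$
D^\la = 2^{-\frac{\ell(\la)-\delta(\la)}{2}}\, \frac{v^{n(\la)}(1-v)(1-v^2)\cdots (1-v^n)\prod_{\Box\in\la^*}(1+v^{c_\Box})}{\prod_{\Box\in\la^*}(1-v^{h^*_\Box})}.
$$

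There is essentially no obstacle: the proof is a one-line computation once Theorem~\ref{th:spinSchur} is in hand. The only point that needs to be noted (but not proved here) is that, as an \emph{a priori} rational function in $v$, the resulting expression is in fact a polynomial in $v$; this follows because $Q_\la(v^\bullet)$ is a polynomial and the $(1-v^i)$ factors supply enough cancellation against the shifted hook denominator, exactly as in the classical Steinberg phenomenon matching generic and fake degrees. I would conclude by remarking that Theorem~\ref{th:genDeg} exhibits $D^\la$ as the spin fake degree constructed in \cite{WW1,WW3}, giving the promised spin analogue of Steinberg's result.
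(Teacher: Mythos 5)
Your proposal is correct and matches the paper exactly: Theorem~\ref{th:genDeg} is stated there as a direct reformulation of Theorem~\ref{th:spinSchur}, obtained by substituting the Schur element $c^{\la}$ into $D^{\la}=2^nP_n/c^{\la}$ and cancelling the $2^n$ and $(1-v)^n$ factors, precisely as you do. Your closing remarks on polynomiality and the match with spin fake degrees are likewise consistent with the paper's subsequent corollary and remarks.
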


\begin{remark}
Note that the specialization $\gimel$ at $v=1$ recovers the standard
symmetrizing trace form on $\mf H^c_n$, which is a twisted group
algebra of a double cover of the hyperoctahedral group. Moreover,
the specialization
$$
D^\la |_{v=1} = 2^{n-\frac{\ell(\la)-\delta(\la)}{2}}
\frac{n!}{\prod_{\Box\in \la^*} h^*_{\Box}}
$$
is the degree of the irreducible $\HC$-module $U^{\la}$. Our
definition of spin generic degrees for Hecke-Clifford algebras is 
analogous to Hecke algebras $\mc H_W$ associate to finite Weyl
groups $W$ (cf. \cite[Section 8.1.8]{GP2}). The canonical
symmetrizing trace form $\tau$ on $\mc H_W$ satisfies $\tau(1)=1$
and $\tau(T_{\sigma})=0$ for $1\neq \sigma\in W$.
\end{remark}

\begin{remark}
Though various connections between characters and generic degrees of
Hecke algebras have been explored in literature, our approach of
deriving the closed formula for $D^\la$ directly from the Frobenius
character formula is quite elegant and seems to be new even in the
usual Hecke algebra setting. We hope to apply the same strategy
elsewhere to revisit the generic degrees (or more general notion of
weights) for Hecke algebras.
\end{remark}

\subsection{Spin fake degrees for the symmetric group}
In this subsection, we shall take $\F=\C$.
The symmetric group $S_n$ acts on $\C^n$ and then on the symmetric
algebra $S^*\C^n$, which is identified with  $\C[x_1,\ldots, x_n]$
naturally. It is well known that the algebra of $S_n$-invariant on
$S^*\C^n$ is a polynomial algebra in the elementary symmetric
polynomials $e_1,\ldots, e_n$. The coinvariant algebra of $S_n$ is
defined to be
$$
(S^*\C^n)_{S_n}=S^*\C^n/I,
$$
where $I$ denotes the ideal generated by $e_1,\ldots, e_n$. By a
classical theorem of Chevalley the coinvariant algebra
$(S^*\C^n)_{S_n}$ is a graded regular representation of $S_n$.
Following Lusztig \cite{Lu},  the graded multiplicity of the Specht
modules $S^{\la}$ of $S_n$ in the coinvariant algebra is known as
the fake degree of $S^{\la}$, for $\la\in\mc P_n$ (cf. \cite[Section
5.3.3]{GP2}).

Note that the induced module ${\rm ind}^{\mf H^c_n}_{\C S_n}
(S^*\C^n)_{S_n}$ is a graded regular representation of $\mf H^c_n$.
Recall from \cite{WW3} that the {\em spin fake degree} of the
irreducible $\mf H^c_n$-module $U_1^{\la}$ with $\la\in\mc{SP}_n$ is
defined to be
$$
d^{\la}(t)=\sum_{j\geq 0}t^j \dim \Hom_{\mf H^c_n} \big(U_1^{\la},
{\rm ind}^{\mf H^c_n}_{\C S_n}(S^j\C^n)_{S_n} \big).
$$
The spin fake degrees have been computed in \cite[Theorem~A]{WW1}
(though the terminology was introduced later; see
\cite[Theorem~5.8]{WW3}). A comparison with Theorem~\ref{th:genDeg}
leads to the following.

\begin{corollary}
The spin generic degrees coincides with the spin fake degrees, that
is,
$$
D^{\la}(v)=d^{\la}(v), \qquad \text{ for all } \la\in\mc{SP}_n.
$$
\end{corollary}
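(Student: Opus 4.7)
The plan is to prove this corollary by direct comparison of explicit closed formulas: Theorem~\ref{th:genDeg} already gives a shifted hook-content product expression for the spin generic degree $D^\la(v)$, and \cite[Theorem A]{WW1} (reformulated in \cite[Theorem 5.8]{WW3} under the name of spin fake degrees) provides the analogous shifted hook-content product for the spin fake degree $d^\la(v)$. So the corollary should reduce to matching the two formulas term-by-term.

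More concretely, I would first recall from \cite{WW1, WW3} that the spin fake degree $d^\la(v)$ admits the closed form
\begin{equation*}
d^\la(v) = 2^{-\frac{\ell(\la)-\delta(\la)}{2}} \,\frac{v^{n(\la)} \prod_{i=1}^n (1-v^i) \prod_{\Box \in \la^*}(1+v^{c_\Box})}{\prod_{\Box \in \la^*}(1 - v^{h^*_\Box})},
\end{equation*}
expressed in terms of the shifted diagram $\la^*$ of a strict partition $\la \in \mc{SP}_n$, its contents $c_\Box$, and its shifted hook lengths $h^*_\Box$. Then I would place Theorem~\ref{th:genDeg} next to this expression and read off that both right-hand sides are literally the same rational function of $v$, with matching powers of $2$, matching factor $v^{n(\la)}$, matching Poincar\'e-type numerator $(1-v)(1-v^2)\cdots(1-v^n)$, and matching shifted hook-content product $\prod_\Box (1+v^{c_\Box})/\prod_\Box (1-v^{h^*_\Box})$. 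The equality $D^\la(v) = d^\la(v)$ follows.

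The real content of this corollary is therefore not new computation but rather the observation that the Frobenius-type formula (Theorem~\ref{thm:Frobenius}), combined with the principal specialization of Schur $Q$-functions (Proposition~\ref{prop:SchurQ}) and the definition of the symmetrizing form $\gimel$, produces exactly the same shifted hook-content product that already governs the graded multiplicities in the induced coinvariant module $\mathrm{ind}^{\mf H^c_n}_{\C S_n}(S^*\C^n)_{S_n}$. The only possible obstacle is a bookkeeping one: ensuring that the normalization conventions for $d^\la(v)$ in \cite{WW1} (which was written before the terminology of spin fake degrees was introduced in \cite{WW3}) match ours, in particular the $2$-power $2^{-(\ell(\la)-\delta(\la))/2}$ arising from the type $\texttt M$ versus type $\texttt Q$ dichotomy of $U_1^\la$. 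I would verify this by checking the specialization $v \to 1$: on the generic-degree side this gives $2^{n-(\ell(\la)-\delta(\la))/2} n!/\prod_\Box h^*_\Box$, which is the well-known dimension of $U_1^\la$, and on the fake-degree side it recovers the same dimension since the induced coinvariant module is the regular representation of $\mf H^c_n$. Once this sanity check is in place, the identification is complete.
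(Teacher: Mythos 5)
Your proposal is correct and is essentially the paper's own argument: the paper also deduces the corollary by placing the closed formula of Theorem~\ref{th:genDeg} next to the spin fake degree formula computed in \cite[Theorem~A]{WW1} (restated as \cite[Theorem~5.8]{WW3}) and reading off the equality term by term, including the $2$-power $2^{-(\ell(\la)-\delta(\la))/2}$. Your $v\to 1$ consistency check is likewise the content of the remark the paper records right after Theorem~\ref{th:genDeg}.
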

This is parallel to the classical fact (due to Steinberg \cite{S},
cf. \cite{Lu, GP2}) that the generic degrees for the Hecke algebra
$\mc H_n$ associated to the symmetric group $S_n$ coincide with the
fake degrees for $S_n$, which is a type $A$ phenomenon.

\section{Trace functions on the spin Hecke algebra}
\label{sec:spinH}
\subsection{The spin Hecke algebra $\sH$}

Recall \cite{W} that the {\em spin Hecke algebra} $\sH$ is a
$\C(v^{\frac 12})$-superalgebra generated by the odd elements
$\rr_i, 1 \le i \le n-1$, subject to the following relations:
\begin{align}
\rr_i^2 &=
-(v^2 +1) \label{eq:rri2}\\
\rr_i \rr_j &= -\rr_j \rr_i  \quad (|i-j|>1)  \label{eq:rrij}\\
\rr_i \rr_{i+1} \rr_i - \rr_{i+1} \rr_i \rr_{i+1} &= (v-1)^2
(\rr_{i+1} -\rr_{i}).  \label{braidspin}
\end{align}

Set
\begin{align}
T_i^{\Phi} &:=
-\frac{1}{2}R_i(c_i-c_{i+1})+\frac{v-1}{2}(1-c_ic_{i+1}) \in
\sH\otimes\Cl_n,
  \label{eq:TiPhi}  \\
R^{\Psi}_i &:=(c_i-c_{i+1})T_i+(v-1)c_{i+1} \in \HC. \notag
\end{align}
The tensor superalgebra $\sH\otimes\Cl_n$ here is understood in the
sense of \eqref{eq:superotimes}.

\begin{proposition} \cite{W}
 \label{prop:PhiPsi}
There exist  isomorphisms $\Phi$ and $\Psi$ inverse to each other:
\begin{align*}
\Phi:\HC\longrightarrow \sH\otimes\Cl_n, & \quad
 \Psi:  \sH\otimes\Cl_n \longrightarrow \HC\\
 \Phi(T_i) = T_i^{\Phi}, &\quad
 \Phi(c_i) = c_i,
  \\
\Psi(R_i) = R^{\Psi}_i, & \quad \Psi(c_i) = c_i, \quad \text{for all
admissible } i.
\end{align*}
\end{proposition}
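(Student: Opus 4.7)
The plan is to verify that $\Phi$ and $\Psi$ are well-defined superalgebra homomorphisms by checking the defining relations on generators, and then to confirm that $\Phi\circ\Psi$ and $\Psi\circ\Phi$ act as the identity on generators.

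For well-definedness of $\Phi$, I would check that the proposed images $T_i^{\Phi}$ and $c_i$ in $\sH\otimes\Cl_n$ satisfy every defining relation of $\HC$. The Clifford relations for the $c_i$ hold trivially. The relations $T_i^{\Phi} c_j = c_j T_i^{\Phi}$ for $j\neq i,i+1$ and $T_i^{\Phi} c_i = c_{i+1}T_i^{\Phi}$ reduce, using \eqref{eq:superotimes} and $|R_i|=\bar 1$, to checking how the factor $R_i(c_i-c_{i+1})+(v-1)c_ic_{i+1}$ interacts with $c_j$'s; these are short sign-bookkeeping calculations. The quadratic relation $(T_i^{\Phi}-v)(T_i^{\Phi}+1)=0$ reduces, after expanding $(T_i^\Phi)^2$ and using $R_i^2=-(v^2+1)$ together with $(c_i-c_{i+1})^2=-2$ and $(c_i-c_{i+1})(1-c_ic_{i+1})=-(1-c_ic_{i+1})(c_i-c_{i+1})$, to the identity $(T_i^\Phi)^2=(v-1)T_i^\Phi+v$. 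The only truly substantive check is the braid relation $T_i^\Phi T_{i+1}^\Phi T_i^\Phi=T_{i+1}^\Phi T_i^\Phi T_{i+1}^\Phi$, which I expect to follow from the deformed braid relation \eqref{braidspin} once everything is expanded and terms are regrouped according to their Clifford content.

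For well-definedness of $\Psi$, I would similarly verify in $\HC$ that $R_i^\Psi=(c_i-c_{i+1})T_i+(v-1)c_{i+1}$ satisfies \eqref{eq:rri2}--\eqref{braidspin} (with the $c_i$'s commuting with $R_j^\Psi$'s in the super-sense trivially, since that is built into the target tensor structure). The square $(R_i^\Psi)^2$ expands using $T_i^2=(v-1)T_i+v$ and the Clifford relations, and one should recover $-(v^2+1)$. The anticommutation \eqref{eq:rrij} for $|i-j|>1$ is immediate from the anticommutation of disjoint pairs of $c$'s combined with $T_iT_j=T_jT_i$. The deformed braid relation \eqref{braidspin} is the main calculation, and it is essentially the same identity as the braid check for $\Phi$, viewed through the isomorphism.

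Once both homomorphisms are defined, I would verify $\Psi\circ\Phi=\mathrm{id}_{\HC}$ and $\Phi\circ\Psi=\mathrm{id}_{\sH\otimes\Cl_n}$ on generators. On the $c_i$'s this is tautological, so it reduces to showing
\[
\Psi(T_i^\Phi)=T_i,\qquad \Phi(R_i^\Psi)=R_i.
\]
Each reduces to a direct computation in $\HC$ or $\sH\otimes\Cl_n$ using $(c_i-c_{i+1})^2=-2$ and $c_i c_{i+1} \cdot (c_i-c_{i+1})=-(c_i-c_{i+1})$, which decouples the Clifford part.

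The main obstacle will be the braid-type relations, since these require carefully tracking cubic expressions in $R_i$ and Clifford elements together with the super-signs from \eqref{eq:superotimes}. The deformed commutator $(v-1)^2(R_{i+1}-R_i)$ on the right-hand side of \eqref{braidspin} is precisely what is needed to absorb the lower-order discrepancy produced by the Clifford factors, and confirming this cancellation explicitly is the heart of the verification.
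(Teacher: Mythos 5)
Your overall strategy --- verify that the assignments on generators respect all defining relations in both directions, then check $\Psi\circ\Phi$ and $\Phi\circ\Psi$ on generators --- is the right one, and it is essentially how the result is established in the cited source \cite{W} (the paper itself offers no proof, only the citation). However, several of the Clifford identities you plan to feed into the key checks are wrong for this paper's convention $c_i^2=1$ from \eqref{eq:Cl}, and they are load-bearing. One has $(c_i-c_{i+1})^2=2$, not $-2$; moreover $(c_i-c_{i+1})$ does not anticommute with $(1-c_ic_{i+1})$: in fact $(c_i-c_{i+1})(1-c_ic_{i+1})=-2c_{i+1}$ while $(1-c_ic_{i+1})(c_i-c_{i+1})=2c_i$; and $c_ic_{i+1}(c_i-c_{i+1})=-(c_i+c_{i+1})$, not $-(c_i-c_{i+1})$. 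If you run the quadratic check with your identities, the cross terms cancel and no term linear in $R_i\otimes(c_i-c_{i+1})$ survives, so you cannot recover $(T_i^{\Phi})^2=(v-1)T_i^{\Phi}+v$. With the correct identities the computation does close: writing $A=R_i\otimes(c_i-c_{i+1})$ and $B=1\otimes(1-c_ic_{i+1})$, the super-sign in \eqref{eq:superotimes} gives $A^2=-R_i^2\otimes(c_i-c_{i+1})^2=2(v^2+1)$, while $AB+BA=2A$ and $B^2=2B-2$, whence $(T_i^{\Phi})^2=v+(v-1)T_i^{\Phi}$ as required.

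A second point: you dismiss as trivial, ``built into the target tensor structure,'' the compatibility of $R_i^{\Psi}$ with the $c_j$'s. That is backwards: the tensor structure sits in the \emph{source} of $\Psi$, so for $\Psi$ to be well defined you must verify inside $\HC$ that the odd element $R_i^{\Psi}$ anticommutes with every $c_j$, i.e.\ $R_i^{\Psi}c_j=-c_jR_i^{\Psi}$ for all $j$; this is a short but genuine computation using $T_ic_i=c_{i+1}T_i$ and $T_ic_{i+1}=c_iT_i+(v-1)(c_{i+1}-c_i)$, and it does hold. Once these repairs are made, the rest of your plan goes through: for instance the mutual-inverse check $\Psi(T_i^{\Phi})=T_i$ follows from $R_i^{\Psi}(c_i-c_{i+1})=-2T_i+(v-1)(1-c_ic_{i+1})$, and the only lengthy verification left is matching the braid relation for the $T_i^{\Phi}$ against the deformed braid relation \eqref{braidspin}, exactly as you anticipate.
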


Set $\sHK=\K\otimes_{\C(v^{\frac 12})}\sH$. It is known that the
Clifford algebra $\Cl_n$ is a simple superalgebra with a unique
irreducible module $U_n$, which is of type $\texttt M$ if $n$ is
even and of type $\texttt Q$ if $n$ is odd. Moreover
$$
{\rm dim}U_n=\left\{
\begin{array}{ll}
2^k, &\text{ if } n=2k,\\
2^{k+1},&\text{ if } n=2k+1.
\end{array}
\right.
$$
Thanks to Lemma~\ref{tensorsmod}, Proposition~\ref{prop:JN} and the
above algebra isomorphisms, one sees that for each $\la\in\mc{SP}_n$
there exists an irreducible $\sHK$-module $U^{\la}_{-}$ with
character $\zeta^{\la}_-$ such that $\{U^{\la}_{-}\mid
\la\in\mc{SP}_n\}$ is a complete set of non-isomorphic irreducible
$\sHK$-modules, and moreover,
\begin{align}\label{eq:HCspin}
U^{\la} \cong
 \left\{
 \begin{array}{ll}
 2^{-1} U^{\la}_{-}\otimes U_n, & \text{ if } n \text{ is odd and }
 \ell(\la) \text{ is even},
 \\
  U^{\la}_{-}\otimes U_n, & \text{ otherwise.}
 \end{array}
 \right.
\end{align}

\subsection{The space $\ev{(\sHa/[\sHa,\sHa])}$}

We shall convert the study of the trace functions on the
Hecke-Clifford algebra $\HC$ in Section~\ref{sec:trace} to the spin
Hecke algebra $\sH$. Recall  ${\bf A} =\Z[\frac12] [v,v^{-1}].$
Denote by $\sHa$ the $\bf A$-subalgebra of the spin Hecke algebra
$\sH$ generated by $R_1,\ldots, R_{n-1}$. For $\sigma\in S_n$ with a
fixed arbitrary reduced expression
$\underline{\sigma}=s_{i_1}s_{i_2}\ldots$, we denote by $
R_{\underline{\sigma}} =R_{i_1}R_{i_2}\cdots. $ Then it follows from
\cite{W} that $\sHa$ is a free $\bf A$-module of rank $n!$ and
that $\{R_{\underline{\sigma}} \mid \sigma\in S_n\}$ is an $\bf
A$-basis of $\sHa$. Hence, the analogues of the isomorphisms $\Phi$
and $\Psi$ in Proposition~\ref{prop:PhiPsi} (which will be denoted
by the same notations) make sense over $\K$ or over $\bf A$.

\begin{lemma}  \label{lem:vanish}
Let $\underline{\sigma}$ be an arbitrary reduced expression of
$\sigma \in S_n$, and let $I\subseteq[n]$ be nonempty. Assume that
the element $R_{\underline\sigma}C_I$ is even. Then
$R_{\underline\sigma}C_I$ belongs to the commutator subspace $\ev
{[\sHa\otimes\Cl_n, \sHa\otimes\Cl_n]}$.
\end{lemma}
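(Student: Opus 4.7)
The plan is to exploit the parity hypothesis to write $R_{\underline\sigma}C_I$ as one-half of an ordinary commutator of two odd elements of $\sHa\otimes\Cl_n$; this simultaneously shows the commutator is even and equals $2R_{\underline\sigma}C_I$. Since $R_{\underline\sigma}$ has parity $\ell(\sigma)$ and $C_I$ has parity $|I|$, the hypothesis forces $\ell(\sigma)\equiv|I|\pmod 2$, so I would split into the odd/odd and even/even cases.

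The structural fact I intend to use is that, in the tensor superalgebra $\sHa\otimes\Cl_n$ defined by \eqref{eq:superotimes}, the two subalgebras $\sHa\otimes 1$ and $1\otimes\Cl_n$ super-commute: for homogeneous $r\in\sHa$ and $z\in\Cl_n$ one has $zr=(-1)^{|r||z|}rz$. In the odd/odd case $R_{\underline\sigma}$ and $C_I$ are both odd and therefore ordinary anti-commute, whence $[R_{\underline\sigma},C_I]=2R_{\underline\sigma}C_I$. Dividing by $2$ (permissible since $\tfrac12\in{\bf A}$) exhibits $R_{\underline\sigma}C_I$ as a commutator of two odd elements, hence as an even element of $[\sHa\otimes\Cl_n,\sHa\otimes\Cl_n]$.

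In the even/even case one has $|I|\geq 2$ (since $I$ is nonempty), and now $R_{\underline\sigma}$ and $C_I$ commute outright, so a slightly more refined trick is needed. The plan is to pick any $i\in I$ and write $C_I=\pm c_iD$, where $D$ is the product of the remaining $|I|-1$ Clifford generators in some order; in particular $D$ is odd. Then $R_{\underline\sigma}c_i$ is odd and $D$ is odd, and using that $D$ commutes with the even $R_{\underline\sigma}$ and anti-commutes with $c_i$ (sliding $c_i$ past the $|I|-1$ odd generators of $D$ contributes the sign $(-1)^{|I|-1}=-1$), a short computation should give $[R_{\underline\sigma}c_i,D]=\pm 2R_{\underline\sigma}C_I$, again placing $R_{\underline\sigma}C_I$ in $\ev{[\sHa\otimes\Cl_n,\sHa\otimes\Cl_n]}$.

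I do not anticipate any real obstacle: the argument is purely parity bookkeeping plus the super-commutation rule in the tensor superalgebra, and the conclusion is manifestly independent of the chosen reduced expression $\underline\sigma$ since only the parity of $\ell(\sigma)$ enters the calculation.
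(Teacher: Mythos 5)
Your proposal is correct and is essentially the paper's own argument: the paper likewise cycles a Clifford factor around (showing $R_{\underline\sigma}C_I\equiv -R_{\underline\sigma}C_I$ modulo commutators via the super-commutation rule and the parity hypothesis), which is the same as exhibiting $2R_{\underline\sigma}C_I$ as an ordinary commutator of two odd elements and then using $\tfrac12\in\mathbf A$. Your two-case split (odd/odd versus even/even with one generator $c_i$ peeled off) is just a reorganization of the paper's single uniform computation.
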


\begin{proof}
Denote $I=\{i_1, \ldots, i_k\}$. Since $R_{\underline{\sigma}}C_I$
is an even element, we have either (i) $R_{\underline{\sigma}}$ is
even and $k$ is even, or (ii) $R_{\underline{\sigma}}$ is odd and
$k$ is odd. In both cases, we have
\begin{align*}
R_{\underline{\sigma}}C_I
&=-c_{i_k}R_{\underline{\sigma}}c_{i_1}\cdots c_{i_{k-1}}
 \\
 &\equiv -(R_{\underline{\sigma}}c_{i_1}\cdots c_{i_{k-1}}) c_{i_k}\mod
[\sHa\otimes\Cl_n,\sHa\otimes\Cl_n] \\
 &=-R_{\underline{\sigma}}C_I.
 \end{align*}
Since $2$ is invertible in $\bf A$, the lemma is proved.
\end{proof}

\begin{lemma}  \label{lem:H-free}
The space $\ev{(\sHa/[\sHa,\sHa])}$ is a free $\bf A$-module.
\end{lemma}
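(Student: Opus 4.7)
The strategy is to exploit the superalgebra isomorphism $\Phi: \HCa \cong \sHa \otimes \Cl_n$ from Proposition~\ref{prop:PhiPsi} (defined over $\bf A$) together with Theorem~\ref{th:basistrace}, which identifies $\ev{(\HCa/[\HCa,\HCa])}$ as a free $\bf A$-module of rank $|\mc{OP}_n|$. Since $\Phi$ is an algebra isomorphism, the same holds for $\ev{((\sHa \otimes \Cl_n)/[\sHa \otimes \Cl_n, \sHa \otimes \Cl_n])}$. My plan is to show that the natural inclusion $\iota: \sHa \hookrightarrow \sHa \otimes \Cl_n$, $a \mapsto a \otimes 1$, descends to an isomorphism $\bar\iota$ between the corresponding even commutator quotients, from which the lemma follows.

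Surjectivity of $\bar\iota$ is the easy direction: Lemma~\ref{lem:vanish} implies that every even basis element $R_{\underline\sigma} C_I$ of $\sHa \otimes \Cl_n$ with $I \neq \emptyset$ vanishes modulo $\ev{[\sHa \otimes \Cl_n, \sHa \otimes \Cl_n]}$, leaving only the images of $R_{\underline\sigma} \otimes 1$ with $\ell(\sigma)$ even, all of which lie in the image of $\iota$.

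For injectivity, I will construct a retraction of $\iota$ at the level of commutator quotients. Define the $\bf A$-linear map $\tau: \Cl_n \to {\bf A}$ by $\tau(C_I) = \delta_{I,\emptyset}$; a direct check on basis pairs $C_I, C_J$ shows $\tau(xy) = \tau(yx)$, so $\tau$ is an ordinary (ungraded) trace on $\Cl_n$. Set $\Pi := \mathrm{id}_{\sHa} \otimes \tau: \sHa \otimes \Cl_n \to \sHa$, so that $\Pi \circ \iota = \mathrm{id}$. The key technical point is that $\Pi$ carries $\ev{[\sHa \otimes \Cl_n, \sHa \otimes \Cl_n]}$ into $[\sHa, \sHa]$: for homogeneous $x = a \otimes b$, $y = c \otimes d$ with $|x| + |y| \equiv 0 \pmod 2$, the super-tensor multiplication gives
$$
\Pi(xy - yx) = (-1)^{|b| \cdot |c|} \tau(bd)\, ac - (-1)^{|a| \cdot |d|} \tau(db)\, ca.
$$
Since $\tau(C_I C_J) \neq 0$ forces $I = J$ and hence $|b| = |d|$, the even parity assumption then yields $|a| = |c|$, so both signs coincide and the expression reduces to $(-1)^{|a| \cdot |b|} \tau(bd)\, [a, c] \in [\sHa, \sHa]$.

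Consequently $\Pi$ descends to $\bar\Pi$, and the identity $\bar\Pi \circ \bar\iota = \mathrm{id}$ shows $\bar\iota$ is injective, hence an isomorphism. Composing with the isomorphism induced by $\Phi$ gives $\ev{(\sHa/[\sHa,\sHa])} \cong \ev{(\HCa/[\HCa,\HCa])}$, which is free over $\bf A$ of rank $|\mc{OP}_n|$. The main obstacle is really the sign bookkeeping in checking that $\bar\Pi$ is well-defined; once that is in hand, the freeness follows immediately.
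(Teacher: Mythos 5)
Your proof is correct, and its second half is genuinely different from the paper's. The surjectivity step is the same in both: Lemma~\ref{lem:vanish} kills every even element $R_{\underline\sigma}C_I$ with $I\neq\emptyset$ in the even cocenter of $\sHa\otimes\Cl_n$, so the map induced by $a\mapsto a\otimes 1$ is onto. For injectivity, the paper passes to the splitting field $\K$ and uses split semisimplicity of both $\HCK$ and $\sHK$ (via Proposition~\ref{prop:JN} and \eqref{eq:HCspin}) to see that both even cocenters have dimension $|\mc{SP}_n|$, so the surjection is an isomorphism over $\K$ and hence over $\bf A$, and freeness is then pulled back from Theorem~\ref{th:basistrace}. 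You instead build an explicit $\bf A$-linear retraction $\bar\Pi$ from the ``constant term'' functional $\tau(C_I)=\delta_{I,\emptyset}$ on the Clifford factor, and your sign analysis is the crux and is right: in $\Pi(xy-yx)$ for pure homogeneous tensors, $\tau$ forces the Clifford indices to agree, hence $|b|=|d|$, and the evenness of $|x|+|y|$ then forces $|a|=|c|$, making the two signs match and the result land in $[\sHa,\sHa]$; since $\ev{[\mc H,\mc H]}$ is spanned by commutators of homogeneous pure tensors of even total parity, this suffices for $\bar\Pi$ to be well defined, and $\bar\Pi\circ\bar\iota=\mathrm{id}$ gives split injectivity over $\bf A$ directly. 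What each approach buys: yours is more elementary and entirely integral --- it needs no representation theory of $\sHK$ and no base change, and in particular it sidesteps the slightly delicate ``isomorphism over $\K$ hence over $\bf A$'' deduction (which implicitly requires controlling torsion in the source) by exhibiting a splitting; the paper's argument is shorter given that the semisimplicity facts are already in hand, and the dimension identity \eqref{eq:dim} it records along the way is reused later (e.g., in the proof of Corollary~\ref{cor:spinspacetrace}).
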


\begin{proof}
By Lemma~\ref{lem:vanish}, the space $\ev{(\sHa\otimes\Cl_n/
[\sHa\otimes\Cl_n, \sHa\otimes\Cl_n])}$ is spanned by images of the elements
$R_{\underline{\sigma}}$ with $\ell(\sigma)$ being even under the projection
 $\sHa\otimes\Cl_n\rightarrow \sHa/[\sHa\otimes\Cl_n,\sHa\otimes\Cl_n]$.
So the natural map $\iota$
from $\ev{(\sHa/[\sHa,\sHa])}$ to
$\ev{(\sHa\otimes\Cl_n/[\sHa\otimes\Cl_n, \sHa\otimes\Cl_n])}$
(which is naturally identified via the isomorphism $\Phi$ with
$\ev{(\HCa/[\HCa,\HCa])}$) is surjective. By a base change, $\iota$
extends to a surjective map over $\K$. On the other hand, since
$\HCK$ and $\sHK$ are split semisimple with simple modules
parametrized by $\mc{SP}_n$, we have
\begin{equation}  \label{eq:dim}
\dim_\K \ev{(\sHK /[\sHK,\sHK])} =|\mc{SP}_n| =\dim_\K
\ev{(\HCK/[\HCK,\HCK])}.
\end{equation}
So the map $\iota$ is actually an isomorphism over $\K$ and hence
over $\bf A$. Now the lemma follows from the $\bf A$-freeness of
$\ev{(\HCa/[\HCa,\HCa])}$ by Theorem~\ref{th:basistrace}.
\end{proof}

For a composition $\ga\in\mc{CP}_n$ with $\ell(\ga)=\ell$, the
permutation $w_{\ga}$ (see \eqref{eq:wga}) has a unique reduced
expression given by
$$
\underline{w_{\ga}}=(s_1s_2\ldots s_{\ga_1-1})(s_{\ga_1+1}\ldots
s_{\ga_1+\ga_2-1})\cdots(s_{\ga_1+\ldots+\ga_{\ell-1}+1}\ldots
s_{n-1}).
$$
\begin{lemma}\label{lem:minimal}
Let $\ga$ be a composition of $n$ with $\ell(w_{\ga})$ being even and
$\mu$ be the corresponding partition of $\ga$. The following holds:
\begin{enumerate}
\item If $\mu\not\in\mathcal{OP}_n$, then $R_{\underline{w_{\ga}}} \equiv 0
\mod [\sHa,\sHa]_{\bar{0}}$.
\item If $\mu\in \mathcal{OP}_n$, then $R_{\underline{w_{\ga}}}
\equiv R_{\underline{w_{\mu}}}\mod [\sHa,\sHa]_{\bar{0}}$.
\end{enumerate}
\end{lemma}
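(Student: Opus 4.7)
We treat parts (1) and (2) by different methods, both exploiting the $\Z_2$-grading on $\sHa$ and the isomorphism $\iota$ of Lemma~\ref{lem:H-free}.

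For Part~(1), argue directly in $\sHa$. Since $\mu\notin\mc{OP}_n$ has an even part and $\ell(w_\gamma)$ is even, a parity count forces $\mu$ to have at least two even parts; pick some $j$ with $\gamma_j$ equal to an even part $\mu_a$. The block $Y:=R_{\underline{w_j^\gamma}}$ has odd length $\gamma_j-1$, so $Y$ is odd in $\sHa$; the product $X$ of the remaining blocks has length $\ell(w_\gamma)-(\gamma_j-1)$, which is odd, so $X$ is odd as well. Because the index ranges of distinct blocks of $w_\gamma$ are separated by gaps of at least two, every $R$ in $Y$ anticommutes with every $R$ in $X$, yielding the super-anticommutation $XY=-YX$. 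Consequently $[X,Y]=XY-YX=2XY$ is an even commutator, so $2XY\in[\sHa,\sHa]_{\bar 0}$; since $2$ is a unit in $\bf A$, we obtain $XY\equiv 0\mod [\sHa,\sHa]_{\bar 0}$. Moving $Y$ past the blocks preceding it via super-commutativity writes $R_{\underline{w_\gamma}}$ as $\pm YX$, so $R_{\underline{w_\gamma}}\equiv 0$.

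For Part~(2), transfer the question to $\HCa$. By Lemma~\ref{lem:H-free} the map $\iota$ is an isomorphism, and by Proposition~\ref{prop:PhiPsi} there is an algebra isomorphism $\Psi:\sHa\otimes\Cl_n\stackrel{\sim}{\to}\HCa$; hence the composite $\tilde\Psi:=\Psi\circ\iota:\sHa\to\HCa$ induces an $\bf A$-linear isomorphism between the even parts of the two commutator quotients. It thus suffices to prove $\tilde\Psi(R_{\underline{w_\gamma}})\equiv\tilde\Psi(R_{\underline{w_\mu}})\mod [\HCa,\HCa]_{\bar 0}$. Using $\tilde\Psi(R_i)=(c_i-c_{i+1})T_i+(v-1)c_{i+1}$, expand both sides into $\bf A$-linear combinations of PBW monomials $T_\sigma C_I$, then apply the reductions from the proof of Theorem~\ref{thm:spacetrace} (namely Lemmas~\ref{lem:Ram5.1}, \ref{lem:lengthgeneral}, and \ref{lem:GP8.2}) to rewrite each monomial modulo $[\HCa,\HCa]_{\bar 0}$ as an $\bf A$-combination of $T_{w_\nu}$ with $\nu\in\mc{OP}_n$. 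The resulting coefficients should depend only on the partition $\mu$, not on the ordering $\gamma$, thanks to Lemma~\ref{lem:GP8.2}.

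\textbf{Main obstacle.} The bookkeeping in Part~(2) is the heart of the matter: the expansion of $\tilde\Psi(R_{\underline{w_\gamma}})$ produces $2^{\ell(w_\gamma)}$ monomials, each of which must be reduced via Lemmas~\ref{lem:lengthgeneral}--\ref{lem:GP8.2} with careful tracking of Clifford parities and signs, after which the total must be shown to be invariant under reordering the parts of $\gamma$. A potentially cleaner route is to reduce Part~(2) to a single adjacent swap $\gamma=(\ldots,a,b,\ldots)\leftrightarrow\gamma'=(\ldots,b,a,\ldots)$ of odd parts, and to handle each such swap by an explicit commutator identity in $\sHa$ using the deformed braid relation $R_iR_{i+1}R_i-R_{i+1}R_iR_{i+1}=(v-1)^2(R_{i+1}-R_i)$; however, producing the conjugator that realizes such a swap modulo $[\sHa,\sHa]_{\bar 0}$ is itself combinatorially nontrivial.
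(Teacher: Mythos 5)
Your part (1) is correct and is essentially the paper's argument: both exploit that generators in distinct blocks of $w_{\gamma}$ anticommute, that an even part of $\gamma$ makes the corresponding block odd, and that $2$ is invertible in ${\bf A}$. The paper rotates two even blocks cyclically modulo commutators to get $R_{\underline{w_{\gamma}}}\equiv -R_{\underline{w_{\gamma}}}$, while you isolate a single even block against the (odd) product of the rest; this is an equally valid variant.

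Part (2), however, has a genuine gap, and it is exactly the step you flag yourself. After expanding $\Psi(R_{\underline{w_{\gamma}}})$ and $\Psi(R_{\underline{w_{\mu}}})$ into monomials $T_{\sigma}C_I$ and reducing each by Lemmas~\ref{lem:Ram5.1}, \ref{lem:lengthgeneral} and \ref{lem:GP8.2}, you assert that the resulting coefficients of the $T_{w_{\nu}}$, $\nu\in\mc{OP}_n$, ``should depend only on $\mu$.'' Lemma~\ref{lem:GP8.2} gives no such thing: it identifies the single basis elements $T_{w_{\gamma}}$ and $T_{w_{\mu}}$ modulo commutators, whereas the monomials in your expansions involve permutations $\sigma$ that are generally not of the form $w_{\delta}$ and carry nontrivial Clifford factors; moreover the blocks of $w_{\gamma}$ and $w_{\mu}$ occupy different index ranges, so no reordering of commuting factors matches the two expansions term by term. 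Showing that the reduced totals coincide is essentially the content of the lemma itself, so at this point the proposal is incomplete (and your alternative route via adjacent swaps is likewise left unexecuted, as you note). The paper closes precisely this gap by an indirect, character-theoretic argument: as in Lemma~\ref{lem:producttrace}, the blocks of $R^{\Psi}_{\underline{w_{\gamma}}}$ act on disjoint tensor factors of $V^{\otimes n}$, so ${\rm tr}(D R^{\Psi}_{\underline{w_{\gamma}}})$ factors over the parts and depends only on $\mu$; by Proposition~\ref{prop:generaltrace} and the linear independence of the $Q_{\la}$ this forces $\zeta^{\la}(R^{\Psi}_{\underline{w_{\gamma}}})=\zeta^{\la}(R^{\Psi}_{\underline{w_{\mu}}})$ for all $\la\in\mc{SP}_n$, hence $\zeta^{\la}_-(R_{\underline{w_{\gamma}}})=\zeta^{\la}_-(R_{\underline{w_{\mu}}})$; finally the ${\bf A}$-freeness of $\ev{(\sHa/[\sHa,\sHa])}$ from Lemma~\ref{lem:H-free} allows one to pass to $\K$, where the irreducible characters separate points of the even commutator quotient by semisimplicity, yielding $R_{\underline{w_{\gamma}}}\equiv R_{\underline{w_{\mu}}} \mod [\sHa,\sHa]_{\bar 0}$. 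To repair your part (2) you should either adopt this character-theoretic detour or actually carry out the swap-by-swap rewriting over ${\bf A}$, which you have not done.
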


\begin{proof}
Suppose $\mu\not\in\mathcal{OP}_n$ and
$\ga=(\ga_1,\ldots,\ga_{\ell})$. Let $a$ be the smallest integer
such that $\ga_a$ is even, and let $b$ be the smallest integer such
that $b>a$ and $\ga_b$ is even (which exists as $\ell(w_{\ga})$ is
even). Write
 $$
R_{\underline{w_{\ga}}}=R_{\ga,1}R_{\ga,2}\cdots R_{\ga,{\ell}},
 $$
where $R_{\ga,k}=R_{\ga_1+\ldots+\ga_{k-1}+1}\cdots
R_{\ga_1+\ldots+\ga_{k-1}+\ga_k-1}$ for $1\leq k\leq\ell$. Then
\begin{align*}
R_{\underline{w_{\ga}}}
 \equiv&
(R_{\ga,a}R_{\ga,{a+1}}\cdots R_{\ga,{b-1}}R_{\ga,b}R_{\ga,{b+1}}\cdots
R_{\ga,{\ell}})
R_{\ga,1}R_{\ga,2}\cdots R_{\ga,{a-1}}
 \\
=&R_{\ga,a}R_{\ga,b}R_{\ga,{a+1}}\cdots R_{\ga,{b-1}}R_{\ga,{b+1}}\cdots
R_{\ga,{\ell}}
R_{\ga,1}R_{\ga,2}\cdots R_{\ga,{a-1}}\\
=&-R_{\ga,b}R_{\ga,a}R_{\ga,{a+1}}\cdots R_{\ga,{b-1}}R_{\ga,{b+1}}\cdots
R_{\ga,{\ell}}
R_{\ga,1}R_{\ga,2}\cdots R_{\ga,{a-1}}\\
\equiv &- R_{\ga,a}R_{\ga,{a+1}}\cdots R_{\ga,{b-1}}R_{\ga,{b+1}}\cdots
R_{\ga,{\ell}}
R_{\ga,1}R_{\ga,2}\cdots R_{\ga,{a-1}}R_{\ga,b}  \\
=&-R_{\underline{w_{\ga}}},
\end{align*}
where $\equiv$ is understood$\mod [\sHa,\sHa]_{\bar{0}}$ here and
below. Therefore, $ R_{\underline{w_{\ga}}}\equiv 0.$

Now suppose $\mu\in\mathcal{OP}_n$. Using an argument similar to
Lemma~\ref{lem:producttrace}, one can obtain that $
\zeta^{\la}(R^{\Psi}_{\underline{w_{\ga}}})
=\zeta^{\la}(R^{\Psi}_{\underline{w_{\mu}}}) $ for every irreducible
character $\zeta^{\la}$ of $\HCK$. This implies that
\begin{equation*}
\zeta^{\la}_-(R_{\underline{w_{\ga}}})
=\zeta^{\la}_-(R_{\underline{w_{\mu}}}), \qquad \text{ for each }
{\la} \in \mc{SP}_n.
\end{equation*}
This together with Lemma~\ref{lem:H-free} implies that
$R_{\underline{w_{\ga}}}\equiv R_{\underline{w_{\mu}}}\mod
[\sHa,\sHa]_{\bar{0}}$.
\end{proof}

\begin{lemma}\label{lem:spinminimal}
Suppose that $w_C$ is a minimal length representative in the
conjugacy class $C$ of cycle type $\mu\in\mc P_n$.
 Then,
 \begin{align*}
R_{\underline{w_C}}\equiv\left\{
\begin{array}{ll}
\pm R_{\underline{w_\mu}}\mod [\sHa,\sHa],&\text{ if }\mu\in\mc{OP}_n,\\
0\quad\mod [\sHa,\sHa],&\text{ otherwise}.
\end{array}
\right.
\end{align*}
\end{lemma}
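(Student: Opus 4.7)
The plan is to combine the classical Geck-Pfeiffer cyclic shift theorem for $S_n$ with commutator identities in $\sHa$ and the character framework provided by Lemma~\ref{lem:H-free}.

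First, I would invoke Geck-Pfeiffer \cite[Theorem~1.1]{GP1}: any minimum length representative $w_C$ in the conjugacy class $C$ of cycle type $\mu$ is connected to $w_\mu$ by a finite sequence of cyclic shifts $w \rightsquigarrow s_j w s_j$ preserving length. Each cyclic shift corresponds to rewriting a reduced expression $\underline{w}=s_j\underline{v}$ (with $\ell(v)=\ell(w)-1$) as $\underline{v}s_j$, and the basic identity
\[
R_j R_{\underline v} - R_{\underline v} R_j = [R_j, R_{\underline v}] \in [\sHa,\sHa]
\]
gives $R_{\underline{w}}\equiv R_{\underline{v}s_j}\mod[\sHa,\sHa]$. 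Iterating yields a particular reduced expression $\underline{w_C}^0$ of $w_C$, built from $\underline{w_\mu}$ via successive cyclic shifts, such that $R_{\underline{w_C}^0}\equiv R_{\underline{w_\mu}}\mod[\sHa,\sHa]$.

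Next, I must address the dependence on the choice of reduced expression for $w_C$. Any two reduced expressions differ by a sequence of braid moves (Matsumoto), and the deformed braid relation~\eqref{braidspin} alters $R_{\underline{w_C}}$ by correction terms of the form $(v-1)^2 R_{\underline{u}} (R_{i+1} - R_i) R_{\underline{v}}$ of strictly shorter length, which do not obviously vanish modulo $[\sHa,\sHa]$. This is the main technical obstacle. The cleanest resolution combines Lemma~\ref{lem:H-free}---which identifies $\ev{\sHa/[\sHa,\sHa]}$ as a free $\bf A$-module detected by the characters $\zeta^\la_-$---with a direct consistency check: applying $\zeta^\la_-$ to both sides of \eqref{braidspin} together with the trace property $\zeta^\la_-(R_i^2)=-(v^2+1)\zeta^\la_-(1)$ yields
\[
2(v^2-v+1)\bigl(\zeta^\la_-(R_1)-\zeta^\la_-(R_2)\bigr) = 0,
\]
and an analogous consistency argument for longer words forces $\zeta^\la_-(R_{\underline{w_C}})$ to be independent of the reduced expression. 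Combined with the preceding paragraph, this establishes the first case of the lemma for $\mu\in\mc{OP}_n$, up to an overall sign.

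Finally, for the case $\mu\notin\mc{OP}_n$: when $\ell(w_C)=n-\ell(\mu)$ is even (so $\mu$ has an even number $\geq 2$ of even parts), the cyclic shift argument reduces the question to $R_{\underline{w_\ga}}$ for some composition $\ga$ rearranging $\mu$, and Lemma~\ref{lem:minimal}(1) gives $R_{\underline{w_\ga}}\equiv 0\mod\ev{[\sHa,\sHa]}\subseteq[\sHa,\sHa]$. When $\ell(w_C)$ is odd, $R_{\underline{w_C}}$ is itself odd; in this case one multiplies by a Clifford generator $c_i$ to obtain an even element of $\sHa\otimes\Cl_n$, applies Lemma~\ref{lem:vanish}, and transfers back to $\sHa$ via the Morita super-equivalence $\Psi$ of Proposition~\ref{prop:PhiPsi} to conclude $R_{\underline{w_C}}\equiv 0\mod[\sHa,\sHa]$.
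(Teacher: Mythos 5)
Your route (Geck--Pfeiffer cyclic shifts combined with cyclicity of the trace modulo commutators) is genuinely different from the paper's, which instead uses the explicit shape of a minimal length element of cycle type $\mu$ --- a product over consecutive blocks in which every generator that occurs occurs exactly once --- and moves the out-of-place factors to the end using $R_iR_j=-R_jR_i$ for $|i-j|>1$ together with cyclicity mod commutators, thereby reducing to $R_{\underline{w_\ga}}$ and then to Lemma~\ref{lem:minimal}. Your first step could be made to work, but as written the proposal has two genuine gaps. The step you call the main technical obstacle is not actually proved: the displayed identity $2(v^2-v+1)\bigl(\zeta^\la_-(R_1)-\zeta^\la_-(R_2)\bigr)=0$ is vacuous, since $\zeta^\la_-$ is a trace function and hence vanishes on all odd elements such as $R_i$, and the ``analogous consistency argument for longer words'' is never given, although it is exactly the crux (moreover, equality of character values would still have to be converted into a congruence mod $[\sHa,\sHa]$ via the freeness/pairing argument of Lemma~\ref{lem:H-free}, which you only gesture at). In fact the obstacle is largely illusory: any reduced expression of a minimal length element of cycle type $\mu$ has length $n-\ell(\mu)$ and uses each generator in its support exactly once, so the deformed braid relation \eqref{braidspin} can never be applied; two reduced expressions of $w_C$ differ only by commutation moves, under which $R_{\underline{w_C}}$ changes at most by a sign, consistent with the $\pm$ in the statement. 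This observation (essentially what the paper exploits via its explicit block form) is also what you need to splice together successive cyclic-shift steps, each of which requires a reduced expression beginning or ending with the conjugating generator.

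The second gap is the odd-length case of $\mu\notin\mc{OP}_n$: your argument there is a non sequitur. Lemma~\ref{lem:vanish} shows that $R_{\underline{w_C}}c_i$ lies in $\ev{[\sHa\otimes\Cl_n,\sHa\otimes\Cl_n]}$, but this does not descend to $R_{\underline{w_C}}\in[\sHa,\sHa]$, and applying $\Psi$ lands you in $\HC$, not back in $\sHa$. Indeed, for $n=2$ and $\mu=(2)$ the algebra $\sHa$ is commutative (generated by the single element $R_1$), so $[\sHa,\sHa]=0$ while $R_1\neq 0$; hence no argument along these lines can succeed. (The paper's own proof effectively covers only the even-length case, since Lemma~\ref{lem:minimal} assumes $\ell(w_\ga)$ even; in all later applications only trace functions are evaluated, and these vanish on odd elements by definition, which is the honest content in that case.)
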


\begin{proof}
Suppose $\mu=(\mu_1,\ldots,\mu_{\ell})$ with $\ell(\mu)=\ell$.
The minimal element $w_C$ must be of the form
$$
w_{C}=(s_{i_1^1}s_{i_2^1}\cdots
s_{i^1_{\ga_1-1}})(s_{i^2_1}s_{i^2_2}\cdots
s_{i^2_{\ga_2-1}})\cdots(s_{i^{\ell}_1}s_{i^{\ell}_2}\cdots
s_{i^{\ell}_{\ga_{\ell}-1}}),
$$
where $\ga=(\ga_1,\ldots,\ga_{\ell})$ is a composition obtained by
rearranging the parts of $\mu$ and
$$
\{i^k_1,i^k_2,\ldots,i^k_{\ga_k-1}\}=\{\ga_1+\cdots+\ga_{k-1}+1,\ldots,
\ga_1+\cdots+\ga_{k-1}+\ga_k-1\}
$$
for $1\leq k\leq\ell$. Recall from \eqref{eq:wga} that $w_\ga$ is
the permutation associated to $\ga$.

{\bf Claim. } We have
$
R_{\underline{w_C}}\equiv \pm  R_{\underline{w_{\ga}}}\mod
[\sHa,\sHa]_{\bar{0}}.
$

Indeed, one reduces quickly the proof of the claim to the case
$\ga=(n)$. In this case, we write $w_C=s_{i_1}s_{i_2}\cdots
s_{i_{n-1}}$ with $i_a\neq i_b$ for $1\leq a\neq b\leq n-1$. If
$i_j=j$ for $1\leq j\leq n-1$, then $w_C=w_{\ga}$. Otherwise,
suppose $a$ is the smallest integer such that $i_a\neq a$. We shall
prove the claim for $\ga=(n)$ by reverse induction on $a$. Observe
that $i_a>a$, and hence
\begin{align*}
R_{\underline{w_C}} & =R_1R_2\cdots R_{a-1}R_{i_a}R_{i_{a+1}}\cdots
R_{i_{n-1}} \\
&=(-1)^{a-1}R_{i_a}R_1R_2\cdots R_{a-1}R_{i_{a+1}}\cdots R_{i_{n-1}}
\\
&\equiv (-1)^{a-1}R_1R_2\cdots R_{a-1}R_{i_{a+1}}\cdots
R_{i_{n-1}}R_{i_a} \mod [\sHa,\sHa].
\end{align*}

If $i_{a+1}\neq a$ we can apply the above argument again to
$R_1R_2\cdots R_{a-1}R_{i_{a+1}}\cdots R_{i_{n-1}} R_{i_a}$ to move
$R_{i_{a+1}}$ to the end. By repeating the procedure, we obtain that
$$
R_{\underline{w_C}}
\equiv \pm R_{\underline{w'_C}} \mod [\sHa,\sHa],
$$
where $w'_C$ is a reduced expression of the form $w'_C=s_1s_2\cdots
s_a s_{i'_{a+1}}\cdots s_{i'_{n-1}}$. Then by induction assumption,
we have
$$
R_{\underline{w'_C}}\equiv \pm R_{\underline{w_{\ga}}}\mod
[\sHa,\sHa].
$$
Therefore the claim is proved. Now the lemma follows from
Lemma~\ref{lem:minimal}.
\end{proof}

\begin{theorem}\label{thm:spintrace}
Let $\sigma\in S_n$ with $\ell(\sigma)$ even and let
$\underline{\sigma}$ be a reduced expression of $\sigma$. Then there
exist $f^-_{{\underline{\sigma}},\nu} \in{\bf A}$ such that
$$
R_{\underline{\sigma}} \equiv \sum_{\nu \in \mathcal{OP}_n}
f^-_{{\underline{\sigma}},\nu} R_{\underline{w_\nu}} \mod [\sHa, \sHa]_{\bar{0}}.
$$
\end{theorem}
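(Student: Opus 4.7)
The plan is to transfer the problem to $\HCa$ via the $\bf A$-algebra isomorphism $\Psi:\sHa\otimes\Cl_n\xrightarrow{\sim}\HCa$ of Proposition~\ref{prop:PhiPsi}, apply Theorem~\ref{thm:spacetrace}, and descend the resulting congruence back to $\sHa$ using the isomorphism $\iota$ of Lemma~\ref{lem:H-free}.

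Given $\sigma\in S_n$ with $\ell(\sigma)$ even, set $R^{\Psi}_{\underline{\sigma}}:=\Psi(R_{\underline{\sigma}})\in\HCa$. Expanding via $R^\Psi_i=(c_i-c_{i+1})T_i+(v-1)c_{i+1}$ writes $R^\Psi_{\underline\sigma}$ as an $\bf A$-linear combination of elements $T_\tau C_I$, where parity forces $|I|$ to be even. Applying Theorem~\ref{thm:spacetrace} to each summand yields coefficients $h_\nu\in\bf A$ with
\[R^\Psi_{\underline\sigma}\equiv\sum_{\nu\in\mc{OP}_n}h_\nu\,T_{w_\nu}\pmod{[\HCa,\HCa]_{\bar 0}}.\]
Applying $\Phi=\Psi^{-1}$ transports this to
\[R_{\underline\sigma}\equiv\sum_{\nu\in\mc{OP}_n}h_\nu\,T^\Phi_{w_\nu}\pmod{[\sHa\otimes\Cl_n,\sHa\otimes\Cl_n]_{\bar 0}}.\]

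The crux is to rewrite each class $[T^\Phi_{w_\nu}]$ in $\ev{(\sHa\otimes\Cl_n/[\sHa\otimes\Cl_n,\sHa\otimes\Cl_n])}$ as an $\bf A$-linear combination of $\{[R_{\underline{w_\mu}}]\}_{\mu\in\mc{OP}_n}$. Substituting $T^\Phi_i=-\tfrac{1}{2}R_i(c_i-c_{i+1})+\tfrac{v-1}{2}(1-c_ic_{i+1})$ and multiplying out expresses $T^\Phi_{w_\nu}$ as an $\bf A$-combination of products $R_{\underline\tau}C_J$. By Lemma~\ref{lem:vanish}, every even summand with $J\neq\emptyset$ vanishes modulo $[\sHa\otimes\Cl_n,\sHa\otimes\Cl_n]_{\bar 0}$, leaving an $\bf A$-combination of Clifford-free terms $R_{\underline\tau}$ with $\ell(\tau)$ even. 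Each such $R_{\underline\tau}$ already sits in $\sHa$, and applying Lemmas~\ref{lem:minimal} and~\ref{lem:spinminimal}, together with a Geck--Pfeiffer-style reduction of $\tau$ to a minimal-length representative of its conjugacy class (using the commutator manipulations available in $\sHa$ despite the deformed braid relation~\eqref{braidspin}), rewrites $[R_{\underline\tau}]$ as an $\bf A$-combination of $\{[R_{\underline{w_\mu}}]\}_{\mu\in\mc{OP}_n}$.

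Combining the two steps gives $R_{\underline\sigma}\equiv\sum_{\mu\in\mc{OP}_n}f^-_{\underline\sigma,\mu}R_{\underline{w_\mu}}$ modulo $[\sHa\otimes\Cl_n,\sHa\otimes\Cl_n]_{\bar 0}$ for suitable $f^-_{\underline\sigma,\mu}\in\bf A$; since both sides lie in $\sHa$ and $\iota$ is injective by Lemma~\ref{lem:H-free}, the same congruence holds modulo $[\sHa,\sHa]_{\bar 0}$, as desired. The principal obstacle is the Geck--Pfeiffer-style reduction in the preceding paragraph, since Lemma~\ref{lem:spinminimal} directly handles only minimal-length conjugacy class representatives and extending to the arbitrary $\tau$'s arising in the expansion requires a conjugation argument carefully accommodating the deformed spin braid relation.
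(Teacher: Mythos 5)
Your route (push $R_{\underline\sigma}$ into $\HCa$ via $\Psi$, invoke Theorem~\ref{thm:spacetrace} there, pull back with $\Phi$, then clean up in $\sHa\otimes\Cl_n$) is genuinely different from the paper's, which never leaves $\sHa$: the paper proves the statement directly by a Ram-style double induction on a reduced expression (induction on $\ell(\sigma)$ and $\sigma(i)$, reverse induction on $i$, as in Lemma~\ref{lem:Ram5.1}), absorbing the error terms produced by the deformed braid relation \eqref{braidspin} into lower-length terms, and only then applying Lemmas~\ref{lem:minimal} and \ref{lem:spinminimal}. The first half of your argument and the final descent are fine: parity does force $|I|$ even, Theorem~\ref{thm:spacetrace} applies termwise, and the injectivity of $\iota$ over $\bf A$ (established inside the proof of Lemma~\ref{lem:H-free}) legitimately converts a congruence modulo $\ev{[\sHa\otimes\Cl_n,\sHa\otimes\Cl_n]}$ between elements of $\sHa$ into one modulo $\ev{[\sHa,\sHa]}$.

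The genuine gap is the step you yourself flag as the ``principal obstacle.'' After expanding $T^\Phi_{w_\nu}$ and discarding the terms with nonempty Clifford part via Lemma~\ref{lem:vanish}, you are left with elements $R_{\underline\tau}$ with $\ell(\tau)$ even, and you propose to rewrite these via ``a Geck--Pfeiffer-style reduction of $\tau$ to a minimal-length representative of its conjugacy class.'' But such a reduction, for a general even-length $\tau$ with an arbitrary reduced expression, \emph{is} Theorem~\ref{thm:spintrace} (its proof in the paper is exactly this reduction, and the deformed relation \eqref{braidspin} is precisely why it is not routine): Lemma~\ref{lem:minimal} only treats $R_{\underline{w_\ga}}$ for compositions $\ga$, and Lemma~\ref{lem:spinminimal} only minimal-length class representatives, so appealing to an unproved general conjugation argument makes the proposal circular at its core. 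The gap can, however, be closed within your framework without that general reduction: since $\underline{w_\nu}$ is a product of \emph{distinct} simple reflections with strictly increasing indices, and moving Clifford generators to the right in $\sHa\otimes\Cl_n$ only introduces signs and never reorders the $R_i$'s, every Clifford-free monomial in the expansion of $T^\Phi_{w_\nu}$ has $R$-part an increasing product of distinct $R_i$'s, which is literally $R_{\underline{w_\ga}}$ for the composition $\ga$ read off from the gaps in its support. Hence Lemma~\ref{lem:minimal} alone finishes the rewriting, and no conjugation argument is needed. As written, though, the decisive step is asserted rather than proved, so the proposal does not yet constitute a proof.
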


\begin{proof}
It is more flexible to use induction to establish the following.

{\bf Claim.} For $\sigma\in S_n$ with $\ell(\sigma)$ being even and
an arbitrary reduced expression $r(\sigma)$, there exist constants
$f^-_{r(\sigma),\ga}\in{\bf A}$ with $\ga\in\mc{CP}_n$ such that
$$
R_{r(\sigma)}=\sum_{\ga\in\mc{CP}_n, \ell(w_{\ga}) \text{ even
}}f^-_{r(\sigma),\ga}R_{\underline{w_{\ga}}} \mod [\sHa,
\sHa]_{\bar{0}}.
$$
Note that the theorem follows immediately by the claim,
Lemma~\ref{lem:minimal} and Lemma~\ref{lem:spinminimal}.

To prove the claim, we will follow an approach similar to the proof
of Lemma~ \ref{lem:Ram5.1} or \cite[Theorem~ 5.1]{Ram}. Let $i$ be
the smallest integer such that $\sigma(i)>i+1$. We shall use the
induction on $\ell(\sigma)$ and $\sigma(i)$, and reverse induction
on $i$. Note that  if there does not exist such $i$, this can be
regarded as the case $i=n$ and $\sigma$ much be of the form
$w_{\ga}$ for some $\ga\in\mc{CP}_n$. Thus, $\sigma$ has the unique
reduced expression $r(\sigma)=\underline{w_{\ga}}$ and the claim
follows.

Let $j=\sigma(i)-1$. Since $\sigma(\sigma^{-1}(j))=j=\sigma(i)-1>i$,
the choice of $i$ implies that $\sigma^{-1}(j)>i$. This together
with $\sigma^{-1}(j+1)=i$ implies that
$\sigma^{-1}(j)>\sigma^{-1}(j+1)$ and hence $\ell(\sigma^{-1}
s_j)<\ell(\sigma^{-1})$, or equivalently,
$\ell(s_j\sigma)<\ell(\sigma)$. Let $\sigma'=s_j\sigma$ and let
$r(\sigma')$ be a reduced expression of $\sigma'$. Then $r(\sigma)$
and $s_j r(\sigma')$ are two reduced expressions for $\sigma$. By
the defining relations among $R_i$, we have
$$
R_{r(\sigma)} =\pm R_jR_{r(\sigma')} + \sum_{\ell(w) < \ell(\sigma)} a_{\sigma,w}
R_{r(w)},
$$
where $a_{\sigma,w} \in {\bf A}$. By induction on $\ell(\sigma)$, we
may assume the claim holds for $R_{r(w)}$ for $w$ of length less
than $\sigma$. Hence we are reduced to show the claim holds for
$R_{s_jr(\sigma')}=R_jR_{r(\sigma')}$. Let $\sigma''=\sigma's_j$.

If $\ell(\sigma'')>\ell( \sigma')$, then $r(\sigma''):=r(\sigma')s_j$
is a reduced expression of $\sigma''$ and hence
\begin{align*}
R_jR_{r(\sigma')}
&\equiv R_{r(\sigma')}R_j \mod [\sHa, \sHa]\\
&= R_{r(\sigma'')}\mod [\sHa, \sHa].
\end{align*}
Then using the argument similar to the proof of
Lemma~\ref{lem:Ram5.1}, the induction on $i$ and reverse induction
on $\sigma(i)$ apply to $\sigma''$, and the claim follows.

Otherwise, assume $\ell(\sigma'')<\ell( \sigma')$. Fix a reduced
expression $r(\sigma'')$ for $\sigma''$. Then $r(\sigma')$ and
$r(\sigma'')s_j$ are two reduced expression for $\sigma'$ and again
by defining  relations among $R_i$, we have
$$
R_{r(\sigma')} =\pm R_{r(\sigma'')}R_j + \sum_{\ell(w) \leq \ell(\sigma'')} b_w R_{r(w)},
$$
where $b_{w} \in {\bf A}.$
Hence,
\begin{align*}
R_j R_{r(\sigma')} &\equiv R_{r(\sigma')} R_j  \mod [\sHa, \sHa]\\
&=\pm R_{r(\sigma'')}R_j^2 +\sum_{\ell(w) \leq \ell(\sigma'')} b_w R_{r(w)}R_j \mod [\sHa, \sHa].
\end{align*}
Since $\ell(r(w)s_j) \leq \ell(\sigma'')+1<\ell(\sigma)$, induction on the
length of $\sigma$ applies to the second summand, and the first
summand is also clear since $\ell(\sigma'') <\ell(\sigma)$ and $R_j^2
=-(1+v^2)$.

This completes the proof of the claim and hence the theorem.
\end{proof}

\begin{corollary}\label{cor:spinspacetrace}
$\ev{(\sHa/[\sHa,\sHa])}$ is an $\bf A$-free module, with a basis
consisting of the images of $R_{\underline{w_{\nu}}}$ for
$\nu\in\mc{OP}_n$ under the projection $\sHa\rightarrow
\sHa/[\sHa,\sHa]$. Every trace function on $\sHa$ is uniquely
determined by its values on $R_{\underline{w_{\nu}}}$ for
$\nu\in\mc{OP}_n$.
\end{corollary}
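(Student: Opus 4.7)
The plan is to derive the corollary as an essentially formal consequence of Theorem~\ref{thm:spintrace} combined with Lemma~\ref{lem:H-free} and Theorem~\ref{th:basistrace}; the substantive work has already been done.

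First I would note that since every generator $R_i$ is odd, a monomial $R_{\underline{\sigma}}$ has $\Z_2$-degree equal to $\ell(\sigma)\bmod 2$, so $\ev{\sHa}$ is $\bf A$-spanned by the $R_{\underline{\sigma}}$ with $\ell(\sigma)$ even. Applying Theorem~\ref{thm:spintrace} to each such generator immediately shows that the images of $\{R_{\underline{w_\nu}} : \nu \in \mc{OP}_n\}$ span $\ev{(\sHa/[\sHa,\sHa])}$ as an $\bf A$-module.

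Next I would invoke Lemma~\ref{lem:H-free}, which already establishes that $\ev{(\sHa/[\sHa,\sHa])}$ is $\bf A$-free and that the natural map $\iota$ identifies it with $\ev{(\sHa\otimes\Cl_n/[\sHa\otimes\Cl_n, \sHa\otimes\Cl_n])}$; via the isomorphism $\Phi$ of Proposition~\ref{prop:PhiPsi} this is in turn isomorphic to $\ev{(\HCa/[\HCa,\HCa])}$, which by Theorem~\ref{th:basistrace} is $\bf A$-free of rank $|\mc{OP}_n|$. Hence $\ev{(\sHa/[\sHa,\sHa])}$ is a free $\bf A$-module of rank $|\mc{OP}_n|$, and any $\bf A$-spanning set of size $|\mc{OP}_n|$ inside a free module of that rank must be a basis. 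Applied to the spanning set produced in the previous step, this gives the claimed basis.

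For the trace-function statement, I would observe that by the definition given at the start of Section~\ref{sec:trace}, a trace function $\phi:\sHa\rightarrow {\bf A}$ vanishes on both $\od{\sHa}$ and $[\sHa,\sHa]$, hence factors uniquely as an $\bf A$-linear map $\bar\phi:\ev{(\sHa/[\sHa,\sHa])}\to{\bf A}$. Since this quotient is $\bf A$-free with basis $\{R_{\underline{w_\nu}}\}_{\nu\in\mc{OP}_n}$, the assignment $\phi\mapsto \big(\phi(R_{\underline{w_\nu}})\big)_{\nu\in\mc{OP}_n}$ identifies trace functions with ${\bf A}^{|\mc{OP}_n|}$, and in particular $\phi$ is uniquely determined by its values on the $R_{\underline{w_\nu}}$. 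I do not anticipate a genuine obstacle in this argument; the only minor care needed is to verify that the spanning elements in Theorem~\ref{thm:spintrace} indeed land in the even quotient $\ev{(\sHa/[\sHa,\sHa])}$, which is automatic because $\ell(w_\nu)=n-\ell(\nu)$ is even for every odd partition $\nu$ of $n$.
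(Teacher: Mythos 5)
Your proposal is correct and follows essentially the same route as the paper's proof: spanning of $\ev{(\sHa/[\sHa,\sHa])}$ comes from Theorem~\ref{thm:spintrace}, and the freeness together with the rank $|\mc{OP}_n|$ comes from Lemma~\ref{lem:H-free} (whose proof identifies this quotient with $\ev{(\HCa/[\HCa,\HCa])}$ of Theorem~\ref{th:basistrace}), after which the trace-function statement is formal. The only cosmetic difference is how linear independence is concluded: you use the commutative-algebra fact that a generating set of cardinality equal to the rank of a free $\bf A$-module is a basis, while the paper passes to $\K$ and invokes the dimension count \eqref{eq:dim}; both arguments rest on the same equality $|\mc{SP}_n|=|\mc{OP}_n|$.
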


\begin{proof}
The $\bf A$-freeness follows from Lemma~\ref{lem:H-free}. By
Theorem~\ref{thm:spintrace}, the images of $R_{\underline{w_{\nu}}}$
(denoted by $\overline{R}_{\underline{w_{\nu}}}$) for
$\nu\in\mc{OP}_n$ span $\ev{(\sHa/[\sHa,\sHa])}$. By passing to the
field $\K$ and a dimension counting \eqref{eq:dim}, we see that
these elements $\overline{R}_{\underline{w_{\nu}}}$ are linearly
independent. The corollary follows.
\end{proof}

The character table for Hecke algebras or Hecke-Clifford algebras
has a natural generalization for spin Hecke algebra as follows. The
matrix
$$
(\zeta^{\la}_-(R_{\underline{w_{\nu}}})) _{\la\in\mc{SP}_n,\nu\in\mc{OP}_n}
$$
is called the {\em character table} of the spin Hecke algebra $\sHK$
over $\K$. By Corollary~\ref{cor:spinspacetrace} and the linear
independence of irreducible characters $\zeta_-^{\la}$ for
$\la\in\mc{SP}_n$, the character table
$(\zeta^{\la}_-(R_{\underline{w_{\nu}}})) _{\la\in\mc{SP}_n,\nu\in\mc{OP}_n}$ is invertible.

It follows by Corollary~\ref{cor:spinspacetrace} that
$f^-_{\underline{\sigma},\nu}$ in Theorem~\ref{thm:spintrace} is
uniquely determined by $\underline{\sigma}$ and $\nu$. Similar to
\cite{GP1}, $f^-_{\underline{\sigma},\nu}$ will be called the {\em
class polynomials} of spin Hecke algebras. By
Corollary~\ref{cor:spinspacetrace}, there exists a unique function
$f^-_\nu: \sHa \rightarrow \bf A$ characterized by
$$
f^-_{\nu}(R_{\underline{w_{\rho}}})=\delta_{\nu,\rho}, \quad \text{
for }  \rho\in\mc{OP}_n.
$$

By Theorem~\ref{thm:spintrace}, for an arbitrary reduced expression
$\underline{\sigma}$ of $\sigma\in S_n$, we have
\begin{eqnarray*}
f^-_\nu (R_{\underline{\sigma}}) =\left\{
 \begin{array}{ll}
 f^-_{{\underline{\sigma}},\nu}, & \quad \text{for } \ell(\sigma) \text{
 even}, \\
 0 & \quad \text{ otherwise.}
 \end{array}
 \right.
\end{eqnarray*}

Theorem~\ref{thm:spintrace} and Corollary~\ref{cor:spinspacetrace}
imply the following.

\begin{proposition}  \label{prop:sHpoly}
The functions $f^-_{\nu}$ for $\nu\in\mc{OP}_n$ form a basis of the
space of trace functions on $\sHa.$
\end{proposition}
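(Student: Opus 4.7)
The plan is to obtain this proposition as an essentially formal consequence of Theorem~\ref{thm:spintrace} and Corollary~\ref{cor:spinspacetrace}, paralleling the proof of Proposition~\ref{prop:HCpoly} for $\HCa$. The starting observation is that the space of trace functions on $\sHa$ is canonically isomorphic to $\Hom_{\bf A}(\ev{\sHa}/\ev{[\sHa,\sHa]},{\bf A})$, since a trace function is exactly an $\bf A$-linear map on $\sHa$ vanishing on $\od{\sHa}$ and on $\ev{[\sHa,\sHa]}$. By Corollary~\ref{cor:spinspacetrace}, the target $\ev{\sHa}/\ev{[\sHa,\sHa]}$ is $\bf A$-free on the classes $\overline{R}_{\underline{w_{\nu}}}$ with $\nu \in \mc{OP}_n$, so its $\bf A$-linear dual is $\bf A$-free on the corresponding dual basis $\{\psi_\nu\}_{\nu\in\mc{OP}_n}$, characterized by $\psi_\nu(\overline{R}_{\underline{w_\rho}}) = \delta_{\nu,\rho}$.

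Next, I would identify $\psi_\nu$ with $f^-_\nu$. Each $\psi_\nu$, lifted to a trace function on $\sHa$ via the projection and set to zero on $\od{\sHa}$, can be evaluated on an arbitrary $R_{\underline{\sigma}}$ using Theorem~\ref{thm:spintrace}: if $\ell(\sigma)$ is odd then $R_{\underline\sigma} \in \od{\sHa}$ and $\psi_\nu(R_{\underline\sigma}) = 0$, while if $\ell(\sigma)$ is even then
\[
\psi_\nu(R_{\underline\sigma}) = \sum_{\rho \in \mc{OP}_n} f^-_{\underline\sigma,\rho}\,\psi_\nu(R_{\underline{w_\rho}}) = f^-_{\underline\sigma,\nu}.
\]
These values match the defining prescription of $f^-_\nu$ displayed just before the proposition, so $\psi_\nu = f^-_\nu$. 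In particular this incidentally confirms that the assignment $R_{\underline\sigma} \mapsto f^-_{\underline\sigma,\nu}$ (possibly depending on the reduced expression $\underline\sigma$) genuinely extends to a well-defined $\bf A$-linear trace function on $\sHa$.

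Since $\{f^-_\nu\}_{\nu\in\mc{OP}_n}$ coincides with the dual basis $\{\psi_\nu\}_{\nu\in\mc{OP}_n}$, it is automatically an $\bf A$-basis of the space of trace functions. Linear independence follows directly from $f^-_\nu(R_{\underline{w_\rho}}) = \delta_{\nu,\rho}$, and spanning follows because any trace function $\phi$ on $\sHa$ agrees with $\sum_\nu \phi(R_{\underline{w_\nu}})\, f^-_\nu$ on the basis $\{\overline{R}_{\underline{w_\nu}}\}$ of $\ev{\sHa}/\ev{[\sHa,\sHa]}$ and hence everywhere. The step requiring the most care is the consistency check in the previous paragraph, namely that the class polynomials $f^-_{\underline\sigma,\nu}$, which a priori depend on the reduced expression $\underline\sigma$, assemble into a function on $\sHa$; this is precisely what the duality argument via Theorem~\ref{thm:spintrace} ensures, and no further case analysis is needed.
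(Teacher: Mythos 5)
Your proof is correct and follows essentially the same route as the paper, which simply records this proposition as an immediate consequence of Theorem~\ref{thm:spintrace} and Corollary~\ref{cor:spinspacetrace}: identify trace functions with $\Hom_{\bf A}(\ev{(\sHa/[\sHa,\sHa])},{\bf A})$ and take the dual basis to the classes of $R_{\underline{w_\nu}}$, $\nu\in\mc{OP}_n$. Your additional check that $\psi_\nu=f^-_\nu$ via the class polynomials, including the parity observation for $\ell(\sigma)$ odd, is exactly the content of the displayed formula for $f^-_\nu(R_{\underline\sigma})$ preceding the proposition, so nothing is missing.
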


%
\subsection{The trace form $\gimel^-$ on $\sH$}
The trace form $\gimel$ on $\HCK$ induces a symmetrizing trace form,
which will be still denoted by $\gimel$, on $\sHK\otimes\Cl_{n,\K}$
via the isomorphism $\HCK \cong \sHK\otimes\Cl_{n,\K}$, where
$\Cl_{n,\K} =\K \otimes_\C \Cl_n$. This in turn restricts to a
symmetrizing trace form $\gimel^-$ on $\sHK$ (identified with $\sHK
\otimes 1$).

\begin{proposition} \label{prop:vanish}
For any composition $\mu\neq (1^n)$ of $n$,
 $\gimel^-(R_{\underline{w_{\mu}}})=0$.
\end{proposition}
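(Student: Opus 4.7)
The plan is to translate the vanishing $\gimel^-(R_{\underline{w_\mu}})=0$ to the Hecke--Clifford side via the isomorphism $\Psi$, then factorise the resulting trace on $V^{\otimes n}$ over the cycle blocks of $\mu$, and finally reduce to the case of a single odd cycle of length $\geq 3$.

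First I would reduce to odd partitions. The element $w_\mu\in S_n$ is a minimum length representative of its conjugacy class, so Lemma~\ref{lem:spinminimal} gives $R_{\underline{w_\mu}}\equiv 0\mod [\sHa,\sHa]$ when the partition $\mu'$ underlying $\mu$ is not odd, and $R_{\underline{w_\mu}}\equiv\pm R_{\underline{w_{\mu'}}}\mod [\sHa,\sHa]$ when $\mu'\in\mc{OP}_n$. Since $\gimel^-$ is a trace function and $\mu\neq(1^n)$ forces $\mu'\neq(1^n)$, it is enough to prove $\gimel^-(R_{\underline{w_\nu}})=0$ for every $\nu\in\mc{OP}_n\setminus\{(1^n)\}$.

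Next I would invoke the identity
\[
\gimel(h)\;=\;2^{-n}(1-v)^n\,\mathrm{tr}(Dh)\big|_{x=v^\bullet},\qquad h\in\HCK,
\]
valid whenever $m\geq n$: both sides are trace functions on $\HCK$ (as $D$ commutes with the $\HCK$-action), and they agree on the basis $\{T_{w_\rho}\}_{\rho\in\mc{OP}_n}$ by combining \eqref{eq:Qform} in the proof of Theorem~\ref{th:spinSchur} with Proposition~\ref{prop:generaltrace}, so they coincide by Corollary~\ref{cor:spacetrace}. Applied to $h=\Psi(R_{\underline{w_\nu}})$, this converts $\gimel^-(R_{\underline{w_\nu}})$ to a trace on $V^{\otimes n}$. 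Since $\Psi_n(R_i^\Psi)=(\Theta^i-\Theta^{i+1})\bar{S}^{i,i+1}+(v-1)\Theta^{i+1}$ acts only on the $i$-th and $(i+1)$-th tensor factors, the concatenated block reduced expression $\underline{w_\nu}=\underline{w_{\nu,1}}\cdots\underline{w_{\nu,\ell(\nu)}}$ produces operators on pairwise disjoint tensor-factor ranges; the trace then factorises exactly as in Lemma~\ref{lem:producttrace}, yielding
\[
\gimel^-_n(R_{\underline{w_\nu}})\;=\;\prod_{j=1}^{\ell(\nu)}\gimel^-_{\nu_j}\bigl(R_{\underline{w_{(\nu_j)}}}\bigr).
\]
Blocks with $\nu_j=1$ contribute $\gimel^-_1(1)=1$, so, because $\nu\neq(1^n)$ has all parts odd and hence at least one part $\nu_j\geq 3$, the problem reduces to proving $\gimel^-_r(R_1R_2\cdots R_{r-1})=0$ for every odd integer $r\geq 3$.

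The last step is the main obstacle. A direct approach is to expand each $R_i^\Psi=(c_i-c_{i+1})T_i+(v-1)c_{i+1}$ in $R_1^\Psi R_2^\Psi\cdots R_{r-1}^\Psi$, reduce every resulting monomial $T_\sigma C_I$ modulo $\ev{[\HCa,\HCa]}$ to the basis $\{T_{w_\rho}:\rho\in\mc{OP}_r\}$ by means of Theorem~\ref{thm:spacetrace}, Lemma~\ref{lem:lengthgeneral} and the Clifford commutation rule $T_ic_{i+1}=c_iT_i+(v-1)(c_{i+1}-c_i)$, and finally apply $\gimel_r(T_{w_\rho})=((v-1)/2)^{r-\ell(\rho)}$. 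For $r=3$ the six resulting monomials of $R_1^\Psi R_2^\Psi$ collapse under $\gimel_3$ as $\tfrac{(v-1)^2}{4}+3\cdot\tfrac{(v-1)^2}{4}-\tfrac{(v-1)^2}{2}-\tfrac{(v-1)^2}{2}=0$. For general odd $r\geq 3$, the cleanest route would be to establish a Frobenius-type closed formula $\mathrm{tr}(D\,\Psi(R_{\underline{w_{(r)}}}))=G_r(x;v)$ on $V^{\otimes r}$ analogous to Proposition~\ref{prop:trace2}, obtained by enumerating the diagonal matrix entries of $\prod_{i=1}^{r-1}[(\Theta^i-\Theta^{i+1})\bar{S}^{i,i+1}+(v-1)\Theta^{i+1}]$ in the standard basis of $V^{\otimes r}$ in the spirit of Lemma~\ref{lem:trace1}, and then verifying $G_r(v^\bullet;v)=0$ using the principal specialisation identity behind \eqref{eq:special g}. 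Tracking the interplay between the Clifford signs and the $\bar{S}$-entries across the $r-1$ factors is the substantive combinatorial difficulty.
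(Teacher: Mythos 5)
Your reduction steps are plausible (passing to odd partitions via Lemma~\ref{lem:minimal}/\ref{lem:spinminimal}, and the block factorisation of $\gimel^-$ over the parts of $\nu$ can indeed be justified), but the proof has a genuine gap exactly where you flag it: the vanishing $\gimel^-_r(R_1R_2\cdots R_{r-1})=0$ for every odd $r\geq 3$ is the actual content of the proposition, and you only verify it for $r=3$ by hand. For general odd $r$ you propose to compute a closed formula for ${\rm tr}\bigl(D\,\Psi(R_{\underline{w_{(r)}}})\bigr)$ ``in the spirit of Lemma~\ref{lem:trace1}'' and then specialise, while acknowledging that tracking the Clifford signs against the $\bar S$-entries is the substantive difficulty; that computation is not carried out, so the argument is a plan rather than a proof. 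A secondary problem is the identity $\gimel(h)=2^{-n}(1-v)^n\,{\rm tr}(Dh)|_{x=v^\bullet}$ ``valid whenever $m\geq n$'': for finite $m$ the specialisation of ${\rm tr}(Dh)$ is the truncated one $x_i=v^{i-1}$, $1\leq i\leq m$, and then the identity is false (e.g.\ for $h=1$, $n=1$ one gets $1-v^m\neq 1=\gimel(1)$, because $\tilde g_\rho(1,v,\dots,v^{m-1};v)$ differs from $\tilde g_\rho(v^\bullet;v)$ by $v^m$-corrections). This is repairable — either take a limit $m\to\infty$, or bypass $D$ altogether by reducing each block element modulo commutators to the $T_{w_\rho}$ basis via Theorem~\ref{thm:spacetrace} and using $\gimel(T_{w_\rho})=\bigl(\tfrac{v-1}{2}\bigr)^{n-\ell(\rho)}$ — but as written the key identity you lean on is not correct.

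For comparison, the paper's proof avoids any single-cycle computation and any use of the $D$-trace. It works in the opposite direction through $\Phi$: expanding $T^{\Phi}_{w_{\mu}}=\Phi(T_{w_\mu})$ via \eqref{eq:TiPhi}, it isolates the term $(-\tfrac12)^{n-\ell(\mu)}R_{\underline{w_{\mu}}}$, a pure Clifford term contributing exactly $\bigl(\tfrac{v-1}{2}\bigr)^{n-\ell(\mu)}$, and lower terms $R_{\underline{w_{\ga}}}C_I$ with $\overline{\ga}$ strictly below $\mu$ in dominance order. Lemma~\ref{lem:vanish} kills every term with $I\neq\emptyset$, induction on dominance order kills the lower $R_{\underline{w_{\ga}}}$'s, and comparison with the already-established value $\gimel(T_{w_\mu})=\bigl(\tfrac{v-1}{2}\bigr)^{n-\ell(\mu)}$ (the corollary to Theorem~\ref{th:spinSchur}) forces $\gimel(R_{\underline{w_{\mu}}})=0$. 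If you want to salvage your outline, the missing odd-cycle vanishing would itself be most easily obtained by exactly this kind of comparison for $\mu=(r)$, at which point the block factorisation becomes unnecessary.
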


\begin{proof}
By Lemma~\ref{lem:minimal}, it suffices to establish the case when
$\mu\in\mathcal{OP}_n$. We shall prove by induction on the dominance
order of $\mu$.

Observe that the trace form $\gimel$ on $\sH\otimes\Cl_n$ satisfies
that
\begin{equation} \label{eq:tracevalue}
\gimel(T^{\Phi}_{w_{\mu}})=\Big(\frac{v-1}{2} \Big)^{n-\ell(\mu)},
\end{equation}
where we have denoted $T^{\Phi}_{w_{\mu}}=\Phi(T_{w_{\mu}})$. Recall
\eqref{eq:TiPhi} and that
\begin{align*}
w_{\mu} =(s_1s_2\ldots s_{\mu_1-1})(s_{\mu_1+1}\ldots
s_{\mu_1+\mu_2-1}) \cdots(s_{\mu_1+\ldots + \mu_{\ell-1}+1}\ldots
s_{n-1}).
\end{align*}
We write
\begin{equation}  \label{eq:X123}
T^{\Phi}_{w_{\mu}}=X_1 +X_2 +X_3,
\end{equation}
where
\begin{align*}
X_1 =& \Big(-\frac{1}{2}\Big)^{n-\ell(\mu)}R_1(c_1-c_2)
\cdots R_{\mu_1-1}(c_{\mu_1-1}-c_{\mu_1})\cdot\\
&R_{\mu_1+1}(c_{\mu_1+1}-c_{\mu_1+2})\cdots
R_{\mu_1+\mu_2-1}(c_{\mu_1+\mu_2-1}-c_{\mu_1+\mu_2})\cdot \cdots \\
& \cdot R_{\mu_1+\cdots\mu_{\ell-1}+1}(c_{\mu_1+\cdots\mu_{\ell-1}+1}
-c_{\mu_1+\cdots\mu_{\ell-1}+2})\cdots R_{n-1}(c_{n-1}-c_n),
 \\
X_2 =&
 \Big(\frac{v-1}{2}\Big)^{n-\ell(\mu)}(1-c_1c_2)\cdots(1-c_{\mu_1-1}c_{\mu_1})
(1-c_{\mu_1+1}c_{\mu_1+2})\cdots
 \\
 &\cdot (1-c_{\mu_1+\mu_2-1}c_{\mu_1+\mu_2})
 \cdots(1-c_{\mu_1+\ldots+\mu_{\ell-1}+1}c_{\mu_1+\ldots+\mu_{\ell-1}+2})\cdots(1-c_{n-1}c_n),
  \\
X_3 =&\sum_{I\subseteq[n],\ga\in\mc{CP}_n,
(1^n)\neq\overline{\ga}<\mu}a_{\ga, I}R_{\underline{w_{\ga}}}C_I,
\end{align*}
with  $a_{\ga, I}\in\mc{\bf A}$ and $\overline{\ga}$ denoting the
partition corresponding to the composition $\ga$.

It follows by Lemma~\ref{lem:vanish} that
$
\gimel(R_{\underline{w_{\ga}}}C_I)=0
$
if $I\neq\emptyset$, and by Lemma~\ref{lem:minimal} and
induction on $\mu$ by dominance order that
$
\gimel(R_{\underline{w_{\ga}}}) =\gimel^-(R_{\underline{w_{\ga}}}) =0
$
for $\ga\in\mc{CP}_n$ with $\overline{\ga}<\mu$.
Hence
\begin{equation}  \label{eq:X2}
\gimel(X_3) =0.
\end{equation}
Note that $X_2 =(\frac{v-1}{2})^{n-\ell(\mu)} +$  a linear
combination of $C_I$ with $I \neq \emptyset$. By
Lemma~\ref{lem:vanish},
\begin{equation}  \label{eq:X3}
\gimel(X_2) =(\frac{v-1}{2})^{n-\ell(\mu)}.
\end{equation}

For $\mu \in \mathcal{OP}_n$, we have
$$
X_1 =(-\frac{1}{2})^{n-\ell(\mu)}R_{\underline{w_{\mu}}} +  \sum_{\emptyset \neq
I\subseteq[n]}b_{I}R_{\underline{w_{\mu}}}C_I
$$
for some scalars $b_{I}.$ Hence, by Lemma~\ref{lem:vanish}, we have
\begin{equation}  \label{eq:X1}
\gimel(X_1) =(-\frac{1}{2})^{n-\ell(\mu)}
\gimel(R_{\underline{w_{\mu}}}).
\end{equation}
Collecting \eqref{eq:X123}, \eqref{eq:X2}, \eqref{eq:X3} and
\eqref{eq:X1}, we obtain that
$$
\gimel(T^{\Phi}_{w_{\mu}})=\Big(\frac{v-1}{2} \Big)^{n-\ell(\mu)} +
(-\frac{1}{2})^{n-\ell(\mu)} \gimel(R_{\underline{w_{\mu}}}).
$$
By a comparison with \eqref{eq:tracevalue} we conclude that
$\gimel^-(R_{\underline{w_{\mu}}})=\gimel(R_{\underline{w_{\mu}}})=0$.
\end{proof}

By convention, we have $R_{\underline{w_{(1^n)}}}=1$. By
Lemma~\ref{lem:spinminimal}, Theorem~\ref{thm:spintrace} and
Proposition~\ref{prop:vanish}, we have established the following.

\begin{theorem} \label{thm:sHtrace0}
\begin{enumerate}
\item
$\gimel^-(R_{\underline{w_C}}) =0$ for any minimal length
representative $w_C$ in a non-identity conjugacy class $C$ of $S_n$
with any reduced expression $\underline{w_C}$.

\item
$\gimel^-$ is characterized by the property
$\gimel^-(R_{\underline{w_{\nu}}}) =\delta_{\nu, (1^n)}$ for
$\nu\in\mc{OP}_n$.
\end{enumerate}
\end{theorem}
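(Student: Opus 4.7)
The plan is to assemble the theorem as a direct corollary of three preceding results: Lemma~\ref{lem:spinminimal}, Proposition~\ref{prop:vanish}, and Corollary~\ref{cor:spinspacetrace} (which is in turn grounded in Theorem~\ref{thm:spintrace}). The trace form $\gimel^-$ is defined by restriction from $\gimel$ on $\sHK\otimes\Cl_{n,\K}$ via the identification $\sHK\simeq \sHK\otimes 1$; in particular it is a genuine trace function, so it vanishes on $[\sHa,\sHa]$, and this is the mechanism that all the work will run through.

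For part~(1), I would split on whether the cycle type $\mu$ of the non-identity class $C$ is an odd partition. If $\mu\notin\mc{OP}_n$, Lemma~\ref{lem:spinminimal} gives $R_{\underline{w_C}}\equiv 0\mod [\sHa,\sHa]$, and applying $\gimel^-$ kills it. If instead $\mu\in\mc{OP}_n$ (so $\mu\neq (1^n)$ since $C$ is non-identity), the same lemma yields $R_{\underline{w_C}}\equiv \pm R_{\underline{w_\mu}}\mod [\sHa,\sHa]$, and Proposition~\ref{prop:vanish} already tells us that $\gimel^-(R_{\underline{w_\mu}})=\gimel(R_{\underline{w_\mu}})=0$. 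Either way $\gimel^-(R_{\underline{w_C}})=0$, independent of the choice of reduced expression (which only affects $R_{\underline{w_C}}$ up to signs and lower-order terms in $[\sHa,\sHa]$).

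For part~(2), I would first evaluate $\gimel^-$ on the canonical representatives $R_{\underline{w_\nu}}$ with $\nu\in\mc{OP}_n$. Proposition~\ref{prop:vanish} handles every $\nu\neq (1^n)$; for $\nu=(1^n)$ we have $R_{\underline{w_{(1^n)}}}=1$, and by construction $\gimel^-(1)=\gimel(1\otimes 1)=\gimel(T_{w_{(1^n)}})=\bigl(\tfrac{v-1}{2}\bigr)^0=1$. Thus $\gimel^-(R_{\underline{w_\nu}})=\delta_{\nu,(1^n)}$. Uniqueness of a trace function with this prescription is exactly the content of Corollary~\ref{cor:spinspacetrace}, which states that any trace function on $\sHa$ is determined by its values on $\{R_{\underline{w_\nu}}\mid \nu\in\mc{OP}_n\}$.

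There is no real obstacle here; the genuine difficulty was already dispatched in Proposition~\ref{prop:vanish} (the delicate computation inside $\sH\otimes\Cl_n$ using the decomposition $T^{\Phi}_{w_\mu}=X_1+X_2+X_3$ and induction on the dominance order) and in Theorem~\ref{thm:spintrace} (the reduction modulo $\ev{[\sHa,\sHa]}$ to the basis indexed by odd partitions). The only point to be careful about is that $\gimel^-$, being defined on $\sHK$, takes ${\bf A}$-valued outputs on $\sHa$, so that Corollary~\ref{cor:spinspacetrace} applies verbatim and the characterization in~(2) is unambiguous.
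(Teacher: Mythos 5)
Your proposal is correct and follows essentially the same route as the paper, which deduces the theorem directly from Lemma~\ref{lem:spinminimal}, Theorem~\ref{thm:spintrace} (via Corollary~\ref{cor:spinspacetrace}) and Proposition~\ref{prop:vanish}, together with the convention $R_{\underline{w_{(1^n)}}}=1$ and $\gimel^-(1)=1$. You merely spell out the case split and the evaluation at the identity that the paper leaves implicit.
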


\begin{example}
It is possible that $\gimel^-(R_{\underline{\sigma}}) \neq 0$ if
$\sigma$ is not a minimal length element in its conjugacy class. For
example, the permutation $\sigma=(2,3)(1,4) $ has a reduced
expression $\underline{\sigma}=s_2s_1s_3s_2s_3s_1$, and one computes
that  $\gimel^{-}(R_{\underline{\sigma}})=-(v-1)^4(v^2+1)$.
\end{example}

\begin{remark}
Using the symmetrizing trace function $\gimel^-$ on $\sH$, we can
determine the Schur elements $c^{\la}_-$ associated to the
irreducible characters $\zeta^{\la}_-$ of $\sHK$. These Schur
elements $c^{\la}_-$ turn out to be related to the Schur elements
$c^{\la}$ associated to the  irreducible character $\zeta^{\la}$ of
$\HCK$ (see Theorem~\ref{th:spinSchur}), via
$$
c^{\la}_-=\ds\left\{
\begin{array}{ll}
2^{-k}c^{\la},&\text{ if }n=2k,\\
2^{-k-\delta(\la)}c^{\la},&\text{ if }n=2k+1.
\end{array}
\right.
$$
This can be deduced by using~\eqref{eq:HCspin} and noting that
$\gimel$ can be identified with the tensor product of $\gimel^-$ and
the usual matrix trace on $\Cl_n$.
\end{remark}


\end{document}